\newtheorem{thm}{Theorem}[section]
\newtheorem{lemma}[thm]{Lemma}
\newtheorem{prop}[thm]{Proposition}
\newtheorem{cor}[thm]{Corollary}
\theoremstyle{definition}
\newtheorem{defn}[thm]{Definition}
\theoremstyle{remark}
\newtheorem{rem}[thm]{Remark}
\let\c@equation\c@thm
\numberwithin{equation}{section}
\title[Fully Nonlinear Loewner-Nirenberg Problem]{On Fully Nonlinear Loewner-Nirenberg Problem of Ricci curvature}
\thanks{The author is supported by
National Natural Science Foundation of China (No. 12001138).}
\author{Zhenan Sui}
\address{Institute for Advanced Study in Mathematics of HIT, Harbin Institute of Technology, Harbin, China}
\email{sui.4@osu.edu}
\begin{document}

\begin{abstract}
We prove the existence of a smooth complete conformal metric with prescribed kth elementary symmetric function of negative Ricci curvature under certain condition on general domain in Euclidean space. We then formulate this problem for more general equations.
\end{abstract}

\subjclass[2010]{Primary 53C21; Secondary 35J60}

\maketitle


\section {\large Introduction}

\vspace{4mm}

In this paper, we discuss the fully nonlinear version of Loewner-Nirenberg problem: let $\Omega \subsetneq \mathbb{R}^n$ be a domain in Euclidean space with $n \geq 3$ and $g$ the Euclidean metric. Assume that $\partial \Omega$ consists of a finite number of disjoint, non-self-intersecting, smooth, compact submanifolds of $\mathbb{R}^n$. We want to find a smooth complete metric
$g_u = u^{\frac{4}{n - 2}} g$ with $u > 0$, which satisfies
\begin{equation} \label{eq1-1}
\sigma_k \Big( - g_u^{-1} \text{Ric}_{g_u} \Big) := \sigma_k \Big( \lambda \big( - g_u^{-1} \text{Ric}_{g_u} \big) \Big) = 1 \quad \text{  in  } \Omega,
\end{equation}
where $\text{Ric}_g$ is the Ricci tensor of $g$, $\lambda( A ) = ( \lambda_1, \ldots,
\lambda_n )$ are the eigenvalues of the matrix $A$, and
\[ \sigma_k (\lambda) =  \sum\limits_{ 1 \leq i_1 <
\cdots < i_k \leq n} \lambda_{i_1} \cdots \lambda_{i_k} \]
is the $k$th elementary symmetric function defined on
Garding's cone
\[\Gamma_k = \{ \lambda  \in \mathbb{R}^n : \sigma_j ( \lambda ) > 0, \, j = 1,
\ldots, k \}. \]

Under conformal deformation of metric $g_u = u^{\frac{4}{n - 2}} g$, the Ricci tensor $\text{Ric}_{g_u}$ and $\text{Ric}_{g}$ are related by the formula
\begin{equation*}
\frac{n - 2}{2} \mbox{Ric}_{g_u} = \frac{n - 2}{2} \mbox{Ric}_g - (n - 2) \frac{\nabla^2 u}{u} - \bigg( \frac{\Delta u}{u} + \frac{|\nabla u|^2}{u^2} \bigg) g + \frac{n}{u^2} du
\otimes du,
\end{equation*}
where $\nabla{u}$, $\nabla^{2}u$ and $\Delta u$ are the
gradient, Hessian and Laplace-Beltrami operator of $u$ with respect to the background
metric $g$ respectively. We note that $\mbox{Ric}_g \equiv 0$ if $g$ is Euclidean.

Consequently, the geometric problem \eqref{eq1-1} is equivalent to the second order partial differential equation
\begin{equation} \label{conformal2}
\sigma_k^{\frac{1}{k}} \big( W [ u ] \big)  =  \frac{n - 2}{2} u^{\frac{n+2}{n - 2}} \quad \text{ in } \Omega
\end{equation}
along with the boundary condition
\begin{equation} \label{eq1-2}
u =  \infty \quad \text{ on }  \partial \Omega,
\end{equation}
where
\begin{equation} \label{wu}
W [ u ] = g^{-1} \bigg( (n - 2) \nabla^2 u - \frac{n}{u}  du \otimes du + \Big( \Delta u + \frac{|\nabla u|^2}{u} \Big) g - \frac{n - 2}{2} u \text{Ric}_g \bigg).
\end{equation}
We shall call a  $C^2$ function $u$
$k$-admissible (or admissible if there is no ambiguity) in $\Omega$ if
\begin{equation*} \label{admissi}
\lambda \big( W[u] \big)(x) \in \Gamma_k
\end{equation*}
for any $x \in \Omega$. We note that equation \eqref{conformal2} is elliptic if $u$ is admissible.
For convenience, we call equation \eqref{conformal2} the conformal $k$-Ricci curvature equation.
Our goal in this paper is to seek smooth positive admissible solution to \eqref{conformal2}--\eqref{eq1-2}.

When $k = 1$, \eqref{conformal2}--\eqref{eq1-2} reduces to
\begin{equation} \label{scalar_eqn}
\left\{ \begin{aligned}
\frac{4 (n - 1)}{n - 2} \Delta u - S_g  u = & u^{\frac{n + 2}{n - 2}} \quad \text{ in } \,\, \Omega, \\
u = & \infty \quad \text{ on } \,\, \partial \Omega,
\end{aligned} \right.
\end{equation}
where $S_g$ is the scalar curvature with respect to $g$. In this case, the above problem is known as Loewner-Nirenberg problem. If $\Gamma$ is a smooth compact submanifold of $\mathbb{R}^n$ of codimension $m$ such that $\partial\Omega \setminus \Gamma$ is also compact, Loewner and Nirenberg \cite{Loewner} proved the existence of a smooth conformally flat metric with constant negative scalar curvature in $\Omega$ satisfying \[ u(x) \rightarrow \infty \quad \text{ as } x \rightarrow \Gamma \]
if $m < \frac{n}{2} + 1$. They also gave the nonexistence result if $m > \frac{n}{2} + 1$ and conjectured the nonexistence for the borderline case $m = \frac{n}{2} + 1$, which was later proved by Aviles and V\'eron \cite{Aviles, Veron}. Other related work can be found in  \cite{AILA18, ACF92, AM88, Finn98, GW17, Graham17, HJS20, HS20, Jiang21, Mazzeo91} and the references therein.

When $k > 1$, equation \eqref{conformal2} becomes fully nonlinear. Following the literature, we call the above problem fully nonlinear Loewner-Nirenberg problem. This problem and its analogues have been studied by many researchers, see for instance \cite{CHY05, Gonzalez-Li-Nguyen, Guan08, Gursky-Streets-Warren, GV03, LN21, LNX23, LS05, Sui17, Wang21} and the references therein. If $\text{Ric}_g$ in \eqref{eq1-1} is replaced by the Schouten tensor $A_g$:
\[ A_g = \frac{1}{n - 2} \Big( Ric_g - \frac{S_g}{2 (n - 1)}  g \Big), \]
Gonz\'alez, Li and Nguyen \cite{Gonzalez-Li-Nguyen} proved the existence of a Lipschitz continuous complete conformally flat metric $g_u = u^{\frac{4}{n - 2}} g$ satisfying
\begin{equation*}
\sigma_k \Big(  - g_u^{-1} A_{g_u}  \Big) = 1
\end{equation*}
if the vector
\[ u_m = \big( \underbrace{1, \ldots, 1}_{n - m + 1}, \underbrace{- 1, \ldots, - 1}_{m - 1} \big) \in \Gamma_k. \]
They also gave the nonexistence result if $u_m \notin \overline{\Gamma}_k$. The borderline case
\begin{equation} \label{borderline}
u_m \in \partial\Gamma_k
\end{equation}
remains open.

Motivated by the above literatures, we investigate fully nonlinear Loewner-Nirenberg problem of Ricci curvature. Different from the Schouten tensor case, we are able to obtain smooth solutions, thanks to the  second order estimates of Guan \cite{Guan08} and Evans--Krylov theory \cite{Evans, Krylov}.

Our first result is an existence result when $\partial\Omega$ is composed of hypersurfaces.

\begin{thm} \label{theorem2}
Let $\Omega \subsetneq \mathbb{R}^n$ be a domain whose boundary $\partial\Omega$ consists of finitely many disjoint non-self-intersecting smooth compact hypersurfaces. Then there exists a smooth complete metric $g_u = u^{\frac{4}{n - 2}} g$ satisfying \eqref{eq1-1}, or equivalently, there exists a smooth positive admissible solution $u$ to \eqref{conformal2}--\eqref{eq1-2}. Moreover, the conformal factor $u$ has the following growth rate
\begin{equation}  \label{growthrate}
{\rho}^{ \frac{n}{2} - 1}(x) u(x) \rightarrow  \big( (n - 1) (C_n^k)^{\frac{1}{k}} \big)^{\frac{n - 2}{4}} \quad \text{ as }  x \rightarrow \partial\Omega,
\end{equation}
where $\rho (x)$ is the distance of $x$ to $\partial \Omega$.
If $\Omega$ is bounded or $k = 1$, the solution $u$ is unique.
\end{thm}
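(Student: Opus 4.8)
The plan is to build $u$ as the increasing limit of solutions of Dirichlet problems with large finite boundary data, using the interior estimates of Guan \cite{Guan08} and Evans--Krylov \cite{Evans,Krylov} to pass to the limit, and then to read off the boundary growth by comparison with explicit model metrics. For each integer $j\ge 1$ I would first solve
\[
\sigma_k^{\frac1k}\big(W[u_j]\big)=\tfrac{n-2}{2}\,u_j^{\frac{n+2}{n-2}}\ \ \text{in }\Omega,\qquad u_j=j\ \ \text{on }\partial\Omega,
\]
for a smooth positive admissible $u_j$ (first on $\Omega\cap B_R$ and then $R\to\infty$ if $\Omega$ is unbounded). This is a Dirichlet problem of $\sigma_k$-type, obtained by the method of continuity once the a priori estimates for admissible solutions are in hand: the $C^0$ bound on $\partial\Omega$ is prescribed; the interior lower bound comes from a fixed admissible subsolution; the crucial $j$-independent interior upper bound follows by comparing $u_j$, on each ball $B_r(x)\subset\subset\Omega$, with the explicit ``bubble''
\[
v_{B_r(x)}(y)=\big((n-1)(C_n^k)^{\frac1k}\big)^{\frac{n-2}{4}}\Big(\tfrac{2r}{\,r^2-|y-x|^2\,}\Big)^{\frac{n-2}{2}},
\]
an admissible solution of \eqref{conformal2} on $B_r(x)$ blowing up on $\partial B_r(x)$ — its conformal metric is a rescaling of the hyperbolic metric of $B_r(x)$, for which $-\mathrm{Ric}$ is a positive multiple of the metric — so that $u_j\le v_{B_r(x)}$ in $B_r(x)$; the boundary gradient and second order estimates come from barriers built from the distance function $\rho$; the interior gradient estimate is the standard one for this conformal equation; and the interior second order estimate is Guan's \cite{Guan08}. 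Evans--Krylov and Schauder theory upgrade to $C^\infty$, closing the continuity method.

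\textbf{The limit and the boundary rate.} The comparison principle gives $u_j\le u_{j+1}$, hence $u_j\nearrow u$ pointwise; since the $u_j$ are locally uniformly bounded (above by the bubbles, below by $u_1>0$), the interior estimates give $u_j\to u$ in $C^\infty_{\mathrm{loc}}(\Omega)$, and $u$ is a smooth positive admissible solution of \eqref{conformal2}. For the growth rate, fix $x\in\Omega$ near $\partial\Omega$ and let $p$ be the nearest point of $\partial\Omega$. Using the interior ball $B_R(z_p)\subset\Omega$ tangent to $\partial\Omega$ at $p$ with $z_p$ on the inward normal and $R$ uniform (by compactness and smoothness of $\partial\Omega$), one has $u\le v_{B_R(z_p)}$ in $B_R(z_p)$ and $\mathrm{dist}(x,\partial B_R(z_p))=\rho(x)$ along that normal, which yields $\limsup_{x\to\partial\Omega}\rho(x)^{\frac{n-2}{2}}u(x)\le\big((n-1)(C_n^k)^{\frac1k}\big)^{\frac{n-2}{4}}$. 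For the reverse inequality, for each $\tau>0$ take the ball $B_{R'}(c_\tau)\subset\mathbb{R}^n\setminus\overline\Omega$ of radius $R'$ tangent to $\partial\Omega$ at $p$ and pushed a distance $\tau$ outward along the normal at $p$, and the corresponding exterior bubble $w_\tau(y)=\big((n-1)(C_n^k)^{\frac1k}\big)^{\frac{n-2}{4}}\big(2R'/(|y-c_\tau|^2-R'^2)\big)^{\frac{n-2}{2}}$, an admissible solution of \eqref{conformal2} on $\mathbb{R}^n\setminus\overline{B_{R'}(c_\tau)}\supseteq\Omega$ that is finite on $\partial\Omega$; comparison gives $u_j\ge w_\tau$ for $j$ large, hence $u\ge\sup_{\tau>0}w_\tau=w_0$ in $\Omega$, and evaluating $w_0$ along the normal gives $\liminf_{x\to\partial\Omega}\rho(x)^{\frac{n-2}{2}}u(x)\ge\big((n-1)(C_n^k)^{\frac1k}\big)^{\frac{n-2}{4}}$; in particular $u=\infty$ on $\partial\Omega$. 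This establishes \eqref{growthrate}, and completeness of $g_u$ near $\partial\Omega$ follows since \eqref{growthrate} forces $g_u\asymp\rho^{-2}g$ there, so every curve approaching $\partial\Omega$ has infinite length (when $\Omega$ is unbounded the end at infinity is treated separately).

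\textbf{Uniqueness.} Suppose $u,v$ both solve the problem on a bounded $\Omega$. Since $\mathrm{Ric}_g\equiv 0$, one checks directly that $W[tv]=t\,W[v]$ for every constant $t>0$; thus, by the $1$-homogeneity of $\sigma_k^{\frac1k}$ and $\frac{n+2}{n-2}>1$, for $t>1$
\[
\sigma_k^{\frac1k}\big(W[tv]\big)=t\,\sigma_k^{\frac1k}\big(W[v]\big)<t^{\frac{n+2}{n-2}}\,\sigma_k^{\frac1k}\big(W[v]\big)=\tfrac{n-2}{2}(tv)^{\frac{n+2}{n-2}},
\]
so $(1+\varepsilon)v$ is a supersolution of \eqref{conformal2}. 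By \eqref{growthrate}, $u\le(1+\varepsilon)v$ near $\partial\Omega$, and the comparison principle on the bounded domain $\Omega$ then gives $u\le(1+\varepsilon)v$ throughout; letting $\varepsilon\to0$ and interchanging $u$ and $v$ yields $u=v$. When $k=1$, \eqref{scalar_eqn} is the semilinear Loewner--Nirenberg equation, for which uniqueness of the positive solution with infinite boundary data is classical.

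\textbf{Main obstacle.} The enabling ingredients — the interior estimates and Evans--Krylov — are cited, and the growth-rate comparisons are elementary once the explicit bubbles are identified, so I expect the real work to lie in (i) solving the approximating finite-data Dirichlet problems, where the boundary $C^2$ estimate is the classical stumbling block for $\sigma_k$-type equations, (ii) establishing the comparison principle for \eqref{conformal2} in the form used above (delicate because of the $u^{-1}$ terms in $W[u]$), and (iii) the behaviour at infinity when $\Omega$ is unbounded.
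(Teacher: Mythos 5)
Your proposal is correct and, for the existence part, follows essentially the route the paper takes: approximate by Dirichlet problems with large finite boundary data, control the approximants from above by the explicit radial ``bubble'' solutions of Proposition \ref{subsolution} on interior balls (this is exactly the paper's estimate \eqref{ball}), invoke Guan's interior $C^1$ and $C^2$ estimates plus Evans--Krylov, and pass to the limit; the uniqueness argument (scaling $W[tv]=tW[v]$, the growth rate, and the comparison principle) is the paper's argument verbatim. The genuine difference is in how you obtain the growth rate \eqref{growthrate}. The paper proves it (Theorem \ref{asymptotic_near_boundary}) on a general manifold with compact boundary by constructing refined one-sided barriers of the form $c_0(\rho\pm\epsilon)^{1-\frac{n}{2}}e^{w(\rho)}$ with carefully tuned correction functions $w$, which requires a nontrivial computation of $W$ in Fermi coordinates. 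You instead compare $u$ with the explicit bubbles on uniform interior and exterior tangent balls at each boundary point; this is more elementary and avoids any new barrier construction, but it is specific to Euclidean domains (it uses that the tangent-ball bubbles are exact solutions), whereas the paper's barriers work for arbitrary background metrics. The paper itself acknowledges your route in the remark following Theorem \ref{asymptotic_near_boundary} (``we may use Gonz\'alez--Li--Nguyen's method \ldots to give an easier proof''), and for the statement of Theorem \ref{theorem2}, which concerns Euclidean domains only, your version suffices. Two items you defer are supplied in the paper by citation or by an explicit device: the solvability of the finite-data Dirichlet problems is taken from Guan's Corollaries 1.3 and 5.4 in \cite{Guan08} rather than re-proved, and the behaviour at infinity in the unbounded case (which you flag but do not settle) is handled by comparing with the Loewner--Nirenberg $k=1$ solution $\overline{u}$, whose decay $|x|^{n-2}\overline{u}\to c$ makes it a global upper barrier via the Newton--Maclaurin inequality; you would need some such global control to run the comparison on $\partial B_R(0)$ when $\Omega$ is unbounded.
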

We remark that on smooth compact Riemannian manifold with boundary, the existence result was given in Guan \cite{Guan08} and Gursky, Streets and Warren \cite{Gursky-Streets-Warren}; in addition, \cite{Gursky-Streets-Warren} derived the growth rate of solution near the boundary. In Theorem \ref{asymptotic_near_boundary}, we shall adopt the idea in \cite{Gursky-Streets-Warren} but use a simplified subsolution and supersolution to derive the growth rate \eqref{growthrate} on general smooth manifolds with compact boundaries.

Our main result on the solvability of fully nonlinear Loewner-Nirenberg problem of Ricci curvature is as follows.

\begin{thm} \label{theorem1}
Let $\Omega \subsetneq \mathbb{R}^n$ be a domain. Suppose that the boundary $\partial \Omega$ consists of finitely many disjoint non-self-intersecting smooth compact submanifolds, and the maximal solution $u_{\Omega}$ of \eqref{conformal2} satisfies $u_{\Omega} \not\equiv 0$. If each component of $\partial \Omega$ has codimension $m$ satisfying
\begin{equation} \label{nsconditionintroduction}
 v_m = \big( \underbrace{n - m, \ldots, n - m}_{n - m + 1}, \underbrace{2 - m, \ldots, 2 - m}_{m - 1} \big) \in \Gamma_k,
\end{equation}
then there exists a smooth complete metric $g_u = u^{\frac{4}{n - 2}} g$ satisfying \eqref{eq1-1}, or equivalently, there exists a smooth positive admissible solution to \eqref{conformal2}--\eqref{eq1-2}. If some component has codimension $m$ satisfying $v_m \notin \overline{\Gamma}_k$, then there does not exist a complete conformal metric satisfying \eqref{eq1-1}, or equivalently, equation \eqref{conformal2}--\eqref{eq1-2} is not solvable.
\end{thm}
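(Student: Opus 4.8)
The plan is to obtain existence by an exhaustion that reduces the general domain to the hypersurface case of Theorem \ref{theorem2}, and to obtain nonexistence by a blow-up analysis near the offending boundary component. Throughout, the flat model computation is the organizing principle: for $u = c\,|z|^{-\frac{n-2}{2}}$ on $\mathbb{R}^n \setminus \mathbb{R}^{n-m}$ (coordinates $x=(y,z)$, $y \in \mathbb{R}^{n-m}$, $z \in \mathbb{R}^m$), a direct computation of \eqref{wu} gives $\lambda(W[u]) = \tfrac{n-2}{2}\,c\,|z|^{-\frac{n+2}{2}}\, v_m$, with $n-m+1$ eigenvalues equal to $n-m$ (the $y$-directions and the radial $z$-direction) and $m-1$ eigenvalues equal to $2-m$ (the directions of $\mathbb{R}^m$ orthogonal to $z$); thus \eqref{nsconditionintroduction} is exactly the admissibility condition for this model, and matching powers in \eqref{conformal2} fixes $c = \sigma_k^{1/k}(v_m)^{\frac{n-2}{4}}$.

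\emph{Existence when $v_m\in\Gamma_k$ for every component.} I would exhaust $\Omega$ by an increasing sequence of domains $\Omega_j$ with smooth compact \emph{hypersurface} boundary, e.g.\ by smoothing $\{x \in \Omega\cap B_j : \rho(x) > 1/j\}$. Theorem \ref{theorem2} furnishes, for each $j$, a smooth positive admissible solution $u_j$ on $\Omega_j$ with $u_j = \infty$ on $\partial\Omega_j$ (unique, as $\Omega_j$ is bounded). Since $u_{j+1}$ is finite on $\partial\Omega_j \subset \Omega_{j+1}$ while $u_j = \infty$ there, the comparison principle underlying the uniqueness statement of Theorem \ref{theorem2} gives $u_{j+1} \le u_j$ in $\Omega_j$, so $u := \lim_j u_j \ge 0$ exists and is finite in the interior (being $\le u_1$ on $\Omega_1$); by construction it is the maximal solution $u_\Omega$. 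The hypothesis $u_\Omega \not\equiv 0$ forces $u>0$ somewhere; Guan's interior second-order estimates \cite{Guan08}, Evans--Krylov \cite{Evans,Krylov} and Schauder bootstrap then make the convergence $C^\infty_{loc}$ on $\{u>0\}$, so $u$ solves \eqref{conformal2} there, and the strong maximum principle / Harnack upgrade yields $u>0$ throughout $\Omega$.

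\emph{Boundary blow-up and completeness.} Near each component $\Gamma$ of codimension $m$ I would work in Fermi coordinates and construct, from the flat model above, a local subsolution $\underline u = c_-\,\rho^{-\frac{n-2}{2}}(1+o(1))$ and supersolution $\overline u = c_+\,\rho^{-\frac{n-2}{2}}(1+o(1))$ on a tubular shell $\{0 < \rho < \delta\}$ with $c_\pm \to \sigma_k^{1/k}(v_m)^{\frac{n-2}{4}}$ as $\delta\to 0$: because $v_m \in \Gamma_k$ is an open condition, the corrections coming from the second fundamental form of $\Gamma$ and from the Fermi-coordinate expansions of $\nabla^2 u$, $\Delta u$, $|\nabla u|^2$ are lower order and can be absorbed by shrinking $\delta$ and perturbing $c_\pm$; letting the inner blow-up radius tend to $0$ produces $\underline u$ with $\underline u \to \infty$ as $\rho\to 0$ and $\underline u \to 0$ as $\rho\to\delta$. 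Comparing on $\{0<\rho<\delta\}\cap\Omega_j$ — where $u_j = \infty \ge \underline u$ on $\partial\Omega_j$ and $\underline u = 0 \le u_j$ on $\{\rho = \delta\}$ — gives $u_j \ge \underline u$, hence $u \ge \underline u \to \infty$ at $\Gamma$; the supersolution gives the matching upper bound and the growth rate \eqref{growthrate} (with $v_m$ in place of $v_1$). Since $u \gtrsim \rho^{-\frac{n-2}{2}}$ near $\partial\Omega$ we get $g_u \gtrsim \rho^{-2} g$ there, so paths to $\partial\Omega$ have infinite length and, together with control at spatial infinity when $\Omega$ is unbounded, $g_u$ is complete; this proves $u$ solves \eqref{conformal2}--\eqref{eq1-2} and $g_u$ satisfies \eqref{eq1-1}.

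\emph{Nonexistence when $v_m \notin \overline\Gamma_k$ for some component.} Suppose for contradiction that a complete $g_u = u^{\frac{4}{n-2}} g$ solves \eqref{eq1-1} with $u$ admissible, and let $\Gamma$ be a component of codimension $m$ with $v_m \notin \overline\Gamma_k$. Completeness at $\Gamma$ forces $\int_0^\delta u^{\frac{2}{n-2}}\,d\rho = \infty$ along rays meeting $\Gamma$, hence $u$ blows up at $\Gamma$ at least at the rate $\rho^{-\frac{n-2}{2}}$; on the other hand the equation together with Guan's interior estimates and comparison against a radial supersolution built on a hypersurface through a nearby point yields the matching upper bound $u \le C\rho^{-\frac{n-2}{2}}$. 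Fixing $x_0 \in \Gamma$, flattening $\Gamma$, and rescaling $u_\varepsilon(x) = \varepsilon^{\frac{n-2}{2}} u(x_0 + \varepsilon x)$, the two-sided bound and interior estimates make $\{u_\varepsilon\}$ precompact in $C^2_{loc}(\mathbb{R}^n\setminus\mathbb{R}^{n-m})$ with a positive finite limit $u_0$ invariant under translations along $\mathbb{R}^{n-m}$ and homogeneous of degree $-\frac{n-2}{2}$, hence $u_0 = c\,|z|^{-\frac{n-2}{2}}$; then $\lambda(W[u_0])$ is a positive multiple of $v_m$, and since each $u_\varepsilon$ is admissible the closed condition $\lambda(W[u_0]) \in \overline\Gamma_k$ passes to the limit, forcing $v_m \in \overline\Gamma_k$ — a contradiction. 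Hence \eqref{conformal2}--\eqref{eq1-2} is unsolvable in this case.

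\emph{Main obstacle.} I expect the technical heart to be, on the existence side, the barrier constructions around the \emph{curved}, higher-codimension components — verifying the strict sub/supersolution inequalities in a genuine tubular neighborhood while controlling the Fermi-coordinate error terms, and handling overlapping neighborhoods and the behavior at spatial infinity when $\Omega$ is unbounded — and, on the nonexistence side, establishing the two-sided rate $u \asymp \rho^{-\frac{n-2}{2}}$ near $\Gamma$ for an arbitrary admissible solution so that the rescaled limit is nondegenerate; this last point is the fully nonlinear analogue of the Aviles--V\'eron \cite{Aviles,Veron} and Gonz\'alez--Li--Luc \cite{Gonzalez-Li-Nguyen} boundary asymptotics.
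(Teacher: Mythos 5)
Your existence argument follows essentially the same route as the paper: exhaust $\Omega$ by subdomains with hypersurface boundary, take the decreasing limit to get the maximal solution, and force blow-up at each component $\Gamma$ by a local subsolution $c\,\rho^{1-\frac n2}$ whose admissibility as $\rho\to 0$ is governed by $\lambda(\rho\nabla^2\rho)\to(0,\dots,0,1,\dots,1)$, i.e.\ exactly by $v_m\in\Gamma_k$. Two small remarks there: the paper does not make your subsolution vanish on the outer shell $\{\rho=\delta\}$; instead it rescales $\phi$ by a constant $1/A$ chosen from $\inf_{\partial U}u_\Omega>0$ (which is where the hypothesis $u_\Omega\not\equiv 0$, upgraded to $u_\Omega>0$ by the dichotomy Lemma \ref{dichotomy}, enters), and your claimed matching \emph{supersolution} with the sharp constant and the growth rate \eqref{growthrate} near a higher-codimension component is neither proved in the paper nor needed for the theorem — only the lower barrier is used, and for completeness of $g_u$ only $u\gtrsim\rho^{1-\frac n2}$ is required.

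The nonexistence half is where your proposal genuinely diverges from the paper and where it has real gaps. First, the step ``completeness at $\Gamma$ forces $\int_0^\delta u^{\frac{2}{n-2}}\,d\rho=\infty$, hence $u\geq c\,\rho^{-\frac{n-2}{2}}$'' is not valid: divergence of the length integral gives no pointwise rate (e.g.\ $u^{\frac{2}{n-2}}\sim\rho^{-1}(\log\frac1\rho)^{-1}$ still has divergent integral), so the lower half of your two-sided bound is unsupported. Second, even granting $u\asymp\rho^{-\frac{n-2}{2}}$, your blow-up limit $u_0$ is asserted to be translation-invariant along $\Gamma$ and homogeneous of degree $-\frac{n-2}{2}$; rescaling limits of solutions are not automatically homogeneous (one needs a monotonicity formula or a uniqueness-of-tangent argument), and without the exact classification $u_0=c\,|z|^{-\frac{n-2}{2}}$ you cannot conclude that $\lambda(W[u_0])$ is a multiple of $v_m$. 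The paper avoids all asymptotic analysis of the solution: it takes the explicit family $\psi=(c\rho^{-\alpha}+d)^{\beta}$ of Gonz\'alez--Li--Nguyen, shows by the computation \eqref{eq3-1} that for suitable $\alpha,\beta$ one has $\lambda(W[\psi])\notin\overline{\Gamma}_k$ on a whole collar $\{0<\rho<\rho_0\}$ for \emph{all} $c,d>0$ — so $\psi$ is a supersolution in the generalized sense of Definition 2.1 and remains one under arbitrary positive scaling (Proposition \ref{prop}) — compares $u_\Omega\le\psi$ via the maximum principle using the a priori bound $u_\Omega\le C\rho^{1-\frac n2}$ from the interior-ball estimate \eqref{eq1_cor}, and then lets $c\to 0$ to get $u_\Omega\le d^{\beta}$ near $\Gamma$. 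Since every admissible solution lies below the maximal one, every candidate $u$ is bounded near $\Gamma$ and $g_u$ cannot be complete. To repair your approach you would need to supply the sharp two-sided asymptotics and the tangent-cone uniqueness, which is precisely the hard content of Aviles--V\'eron in the semilinear case; the barrier argument sidesteps it entirely.
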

The proof draws on ideas from \cite{Loewner, Gonzalez-Li-Nguyen}. The borderline case $v_m \in \partial\Gamma_k$ is still open. The difficulty lies in the lack of a suitable upper barrier. We observe that when $k = 1$, the borderline case \eqref{borderline} agrees with \eqref{nsconditionintroduction}, which was solved in \cite{Aviles, Veron} by some integral estimates. We wish to reinvestigate the borderline case in the future.

Theorem \ref{theorem2} and Theorem \ref{theorem1} concern the existence and non-existence of smooth solution to \eqref{conformal2}--\eqref{eq1-2}. It should be noted that, if one merely considers continuous (or Lipschitz continuous) solution, the result follows from previous result of Gonz\'alez, Li and Nguyen \cite{Gonzalez-Li-Nguyen}, since \eqref{conformal2} can be recast as an equation for the Schouten tensor.
For the Schouten curvature case, the result in \cite{Gonzalez-Li-Nguyen} is optimal for general domains as described above, in view of the counterexamples to $C^1$ regularity given in Li and Nguyen \cite{LN21} and in Li, Nguyen and Xiong \cite{LNX23}. Because conformal Ricci curvature equations such as equation \eqref{conformal2} contain the $\Delta u$ term, it is possible to obtain a priori estimates up to second order. Combined with Evans-Krylov theory, smooth solution can be obtained. For further discussions concerning the existence and regularity of solutions on general Riemannian manifolds and for a large class of defining functions of the equation, one may see the recent work by Yuan \cite{Yuan22} and Duncan and Nguyen \cite{DN}.

Motivated by Guan and Zhang \cite{Guan-Zhang}, the second goal in this paper is to investigate the existence of complete conformal metric subject to the following more general equation
\begin{equation} \label{eq4-0}
\sigma_k \Big( - g_u^{-1} \text{Ric}_{g_u} \Big) + \alpha(x) \sigma_{k - 1} \Big( - g_u^{-1} \text{Ric}_{g_u} \Big) = \alpha_0(x),
\end{equation}
where $\alpha_0 (x) > 0$ and $\alpha(x)$ are real valued smooth functions.
Under conformal deformation of metric $g_u = u^{\frac{4}{n - 2}} g$, \eqref{eq4-0} is equivalent to
\begin{equation} \label{eq4-1}
\begin{aligned}
\sigma_k \bigg( \frac{2}{n - 2} u^{- \frac{n + 2}{n - 2}}  W[ u ] \bigg) + \alpha(x) \sigma_{k - 1} \bigg( \frac{2}{n - 2} u^{- \frac{n + 2}{n - 2}} W[ u ] \bigg) = \alpha_0(x).
\end{aligned}
\end{equation}
As proved in \cite{Guan-Zhang}, equation \eqref{eq4-1} is elliptic when $u$ is $(k - 1)$-admissible.
In what follows, we may assume that $k \geq 2$ in \eqref{eq4-1}.

Our first result associated to \eqref{eq4-1} shows that similar to \eqref{conformal2}, there exists smooth positive $(k - 1)$-admissible solution to Dirichlet problem.

\begin{thm} \label{Theorem1-1}
Let $\Omega$ be a bounded domain in $\mathbb{R}^n$ whose boundary is composed of finitely many disjoint non-self-intersecting smooth compact hypersurfaces, and $\alpha_0 (x) > 0$, $\alpha(x)$ be real valued smooth functions on $\overline{\Omega}$. For any smooth positive function $\varphi$ defined on $\partial\Omega$, there exists a smooth positive $(k - 1)$-admissible solution $u$ to equation \eqref{eq4-1} which satisfies the boundary condition
\begin{equation} \label{eq1-3}
u = \varphi \quad \text{  on  } \partial\Omega.
\end{equation}
\end{thm}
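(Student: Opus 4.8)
The plan is to solve the Dirichlet problem \eqref{eq4-1}, \eqref{eq1-3} by the continuity method, reducing it to a priori estimates up to the second order; once a uniform $C^{2,\beta}$ bound ($0<\beta<1$) is available, the Evans--Krylov theory \cite{Evans,Krylov} together with Schauder bootstrapping produces a smooth solution, and positivity and $(k-1)$-admissibility pass to the limit. Write $A[u]=\tfrac{2}{n-2}u^{-\frac{n+2}{n-2}}W[u]$, so that \eqref{eq4-1} reads $F(A[u]):=\sigma_k(A[u])+\alpha(x)\,\sigma_{k-1}(A[u])=\alpha_0(x)$; by \cite{Guan-Zhang}, $F$ is elliptic and, after the appropriate normalization, concave along $(k-1)$-admissible functions. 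For $t\in[0,1]$ consider
\[
\sigma_k\big(A[u]\big)+t\,\alpha(x)\,\sigma_{k-1}\big(A[u]\big)=(1-t)+t\,\alpha_0(x)\ \text{ in }\ \Omega, \qquad u=\varphi\ \text{ on }\ \partial\Omega.
\]
At $t=0$ this reads $\sigma_k(A[u])=1$, which is precisely the conformal $k$-Ricci equation \eqref{conformal2} with the finite boundary data $\varphi$; this Dirichlet problem is solvable with a smooth positive $k$-admissible (hence $(k-1)$-admissible) solution, being the finite boundary-value problem used to construct the approximating solutions of Theorem \ref{theorem2}. The right-hand side $(1-t)+t\alpha_0$ lies between two positive constants and $t\alpha$ is bounded, both uniformly in $t$. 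Let $\mathcal{T}\subset[0,1]$ be the set of $t$ for which the $t$-problem has a smooth positive $(k-1)$-admissible solution; then $0\in\mathcal{T}$, and it suffices to show $\mathcal{T}$ is open and closed. Openness is the implicit function theorem: at a solution the linearized operator is uniformly elliptic — the estimates below keep $\lambda(A[u])$ in a fixed compact subset of $\Gamma_{k-1}$ — and, owing to the favorable dependence of the equation on $u$ (the monotone factor $u^{-\frac{n+2}{n-2}}$ and the good term $\tfrac{|\nabla u|^2}{u}g$ in \eqref{wu}), its zeroth-order coefficient has the right sign, so it is invertible on $C^{2,\beta}_0(\overline\Omega)$; the perturbed solutions stay $(k-1)$-admissible, an open condition.

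\textbf{A priori estimates.} Fix $t$ and let $u$ solve the $t$-problem. Since $\text{Ric}_g\equiv0$, one computes $\sigma_1(W[u])=(2n-2)\Delta u$; as $k\ge 2$, $(k-1)$-admissibility forces $\Delta u>0$, so $u$ is subharmonic and $u\le\max_{\partial\Omega}\varphi$, and comparing $u$ with the harmonic extension of $\varphi$ gives an upper bound for the inner normal derivative of $u$ along $\partial\Omega$. For a positive lower bound, $\underline u=a+\epsilon|x|^2$ is $n$-admissible with $W[\underline u]$ a positive multiple of $g$, and with $a=\epsilon$ small enough it satisfies $\underline u\le\varphi$ on $\partial\Omega$ while $\sigma_k(A[\underline u])$ dominates $(1-t)+t\alpha_0$ uniformly; hence $\underline u$ is a subsolution and the comparison principle (again via the structure of \eqref{wu}) gives $u\ge\underline u\ge\epsilon>0$. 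An interior gradient bound follows from a maximum-principle argument on a quantity such as $\log|\nabla u|^2+\phi(u)$: upon differentiating the equation, the good term $\tfrac{|\nabla u|^2}{u}g$ produces a term of favorable sign that controls the bad term $-\tfrac{n}{u}du\otimes du$; the boundary gradient bound comes from local barriers near $\partial\Omega$ built from a subsolution agreeing with $\varphi$ there (for instance $\varphi$ extended to be constant along normal geodesics plus a large multiple of the squared distance to $\partial\Omega$, which is admissible on a tubular neighborhood) together with the bound $u\le\max_{\partial\Omega}\varphi$. For the second derivatives, the interior estimate $|\nabla^2 u|\le C(\|u\|_{C^1})$ is Guan's theorem \cite{Guan08}, applicable by the structural properties of $F$ from \cite{Guan-Zhang}, and the boundary estimate is again by local barriers. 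Then $A[u]$ is bounded, and since the right-hand side is bounded below by a positive constant the Maclaurin inequalities confine $\lambda(A[u])$ to a fixed compact subset of $\Gamma_{k-1}$, giving uniform ellipticity; Evans--Krylov \cite{Evans,Krylov} upgrades this to a uniform $C^{2,\beta}$ bound, and differentiating the equation with Schauder theory gives smoothness. All bounds being uniform in $t$, $\mathcal{T}$ is closed; hence $1\in\mathcal{T}$, which is the theorem. The whole scheme runs parallel to the finite boundary-value problem for \eqref{conformal2} underlying Theorem \ref{theorem2}, with \cite{Guan-Zhang} supplying the ellipticity and concavity needed along $(k-1)$-admissible functions.

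\textbf{The main obstacle.} The delicate step is the boundary second-order estimate — the double normal derivative $u_{\nu\nu}$ on $\partial\Omega$ in particular — as is typical for fully nonlinear Dirichlet problems; it rests on the concavity of $F$ and on constructing barriers compatible with the operator $W[u]$, which must accommodate the indefinite gradient term $-\tfrac{n}{u}du\otimes du$ in \eqref{wu}. This is carried out following \cite{Guan-Zhang}.
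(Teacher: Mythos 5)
Your overall architecture (reduce to $C^{2,\beta}$ a priori estimates, run a continuity argument, finish with Evans--Krylov and Schauder) matches the paper's, but there is a genuine gap at the openness step, and it is precisely the point where the equation \eqref{eq4-1} differs from \eqref{conformal2}. You assert that the linearization of $\sigma_k(A[u])+t\alpha(x)\sigma_{k-1}(A[u])-(1-t)-t\alpha_0(x)$ is invertible because "its zeroth-order coefficient has the right sign." This is not true in general: after the substitution $u=e^{\frac{n-2}{2}v}$ the only zeroth-order dependence enters through the factors $e^{2kv}$ and $e^{2v}$, and the paper's Remark \ref{Remark5-1} shows the coefficient is strictly negative only for the operator written in the quotient form $G[v]=\frac{\sigma_k(\mathcal{W}[v])}{\sigma_{k-1}(\mathcal{W}[v])}-\frac{\alpha_0 e^{2kv}}{(n-2)^k\sigma_{k-1}(\mathcal{W}[v])}$ \emph{with the term $-\frac{\alpha}{n-2}e^{2v}$ kept on the right-hand side}. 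Once that term is brought into the linearization — as it must be along your homotopy, since $\alpha$ multiplies $e^{2v}$ — the zeroth-order coefficient becomes $G_v+\frac{2t\alpha}{n-2}e^{2v}$, which has no sign because $\alpha$ is an arbitrary smooth function. Neither the "monotone factor $u^{-\frac{n+2}{n-2}}$" nor the term $\frac{|\nabla u|^2}{u}g$ (which is first-order, not zeroth-order, and is in any case paired with the indefinite term $-\frac{n}{u}du\otimes du$) rescues this. Consequently the implicit function theorem may fail along your path, and uniqueness along the path is also unavailable. The paper circumvents exactly this by a two-stage argument: first a continuity method for the family \eqref{eq5-2}, whose right-hand side is a fixed function of $x$ so that $G_v<0$ gives invertibility and a comparison principle (Lemma \ref{Lemma5-1}); then a second homotopy \eqref{eq5-3} switching on $\alpha(x)e^{2v}$, handled by \emph{degree theory} \cite{Li89} rather than the implicit function theorem, with the degree computed at the endpoint where the solution is unique and the linearization is invertible, and with Lemma \ref{Lemma5-2} (a Hopf-type lemma) keeping solutions away from $\partial\mathcal{O}$. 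Your proof needs this (or an equivalent device); as written, openness does not follow.

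Two secondary points. First, $\sigma_k+\alpha\sigma_{k-1}$ is not concave on $\Gamma_{k-1}$, and your phrase "after the appropriate normalization" conceals the essential reformulation \eqref{eq4-4}: it is the quotient form of Guan--Zhang that supplies both the concavity needed for Evans--Krylov and the double-normal boundary estimate, and the sign $G_v<0$. Second, the interior and global second-order (and gradient) estimates cannot simply be cited from Guan \cite{Guan08}, whose theorems are for $\sigma_k^{1/k}(W[u])=\psi$; the extra terms coming from $\alpha\sigma_{k-1}$ and the quotient structure force the paper to rederive these estimates (Theorems \ref{interior gradient estimate}--\ref{global second order estimate}), with constants now depending on $\inf\alpha_0$. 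Your lower barrier $a+\epsilon|x|^2$ is also not such that $W[\underline u]$ is "a positive multiple of $g$" (the term $-\frac{n}{\underline u}\,d\underline u\otimes d\underline u$ makes it anisotropic, with the radial eigenvalue only barely nonnegative), and for the boundary gradient estimate one needs a subsolution \emph{equal} to $\varphi$ on $\partial\Omega$ — the paper builds one by gluing a global convex-type function with a boundary barrier via the smoothing lemma of \cite{GuanP02}; these items are repairable but not as written.
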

The proof of Theorem \ref{Theorem1-1} relies on the establishment of $C^2$ a priori estimates (see Theorem \ref{global gradient estimate}, \ref{global second order estimate}, \ref{second order boundary estimate}) and the existence of a subsolution. We notice that the method for $C^2$ estimates is similar to Guan \cite{Guan08} but may be more complicated due to some extra terms. In addition, the estimates in section 4 depend explicitly on $\inf \alpha_0$. Also, we give a new proof of the second order boundary estimate. For deriving preliminary estimates and conducting continuity process, we construct a smooth subsolution to the Dirichlet problem \eqref{eq4-1}--\eqref{eq1-3}, synthesizing the ideas of Guan \cite{Guan08} and Guan \cite{GuanP02}. It is worth mentioning that on smooth closed manifolds, equations in the form \eqref{eq4-0} and its Schouten analogue have been studied by Chen, Guo and He \cite{CGH22}, where second order estimates and existence results are established.

Our next results associated to \eqref{eq4-1} are on the formulation of fully nonlinear Loewner-Nirenberg type problem. When the boundary $\partial\Omega$ is composed of hypersurfaces, we have the following result.
\begin{thm} \label{Theorem6-2}
Let $\Omega$ be a domain in $\mathbb{R}^n$ with nonempty boundary which is composed of finitely many disjoint non-self-intersecting smooth compact hypersurfaces,
and $\alpha_0 (x) > 0$, $\alpha(x)$ be real valued smooth functions on $\overline{\Omega}$.
Then there exists a smooth complete metric $g_u = u^{\frac{4}{n - 2}} g$ satisfying \eqref{eq4-0}, or equivalently, there exists a smooth positive $(k - 1)$-admissible solution $u$ to equation \eqref{eq4-1}--\eqref{eq1-2}, provided that
\[  \inf\limits_{\Omega} \alpha_0(x) > 0, \quad   \inf\limits_{\Omega} \alpha(x) > - \infty, \quad  \sup\limits_{\Omega} \alpha_0(x) < \infty, \quad  \sup\limits_{\Omega} \alpha(x) < \infty.  \]
\end{thm}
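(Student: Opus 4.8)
The plan is to realize the complete solution on $\Omega$ as the increasing limit of solutions to Dirichlet problems on bounded subdomains, following the scheme used for Theorem~\ref{theorem2} and for the classical Loewner--Nirenberg problem. Fix an exhaustion $\Omega_1 \Subset \Omega_2 \Subset \cdots$ of $\Omega$ by bounded domains with smooth boundary; since $\partial\Omega$ is a compact hypersurface one may arrange that, near $\partial\Omega$, $\partial\Omega_j$ is the level set $\{\rho = 1/j\}$ (smooth for $j$ large), together with a large sphere when $\Omega$ is unbounded. On each $\Omega_j$ apply Theorem~\ref{Theorem1-1} to obtain a smooth positive $(k-1)$-admissible solution $u_j$ of \eqref{eq4-1} with $u_j = \varphi_j$ on $\partial\Omega_j$, where $\varphi_j$ is the restriction to $\partial\Omega_j$ of a fixed global admissible subsolution $\underline{u}$ that blows up on $\partial\Omega$ (built from the near-boundary barrier below, glued with a constant, using that the maximum of admissible subsolutions is again one). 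The comparison principle for \eqref{eq4-1} in the $(k-1)$-admissible class — available by the ellipticity noted after \eqref{eq4-1} and the structure exploited in \cite{Guan-Zhang} — then gives $u_j \geq \underline{u}$ on $\Omega_j$ and $u_{j+1} \geq u_j$ on $\Omega_j$, so $\{u_j\}$ is, after restriction, monotone nondecreasing.

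The heart of the argument is the construction of matching barriers in a tubular neighbourhood $\mathcal{N}_\delta = \{0 < \rho < \delta\}$ of $\partial\Omega$. Looking for radial profiles $u = (\beta\rho(1 + O(\rho)))^{-(n-2)/2}$, one has $g_u$ asymptotic to a hyperbolic-type metric for which $-g_u^{-1}\mathrm{Ric}_{g_u} \to \mu\,\mathrm{Id}$ with $\mu$ proportional to $\beta^2$; since $\partial\Omega$ has codimension $m = 1$ the limiting eigenvalue vector is proportional to $v_1 = (n-1,\dots,n-1) \in \Gamma_k$, so no codimension restriction is needed and the maximal solution is automatically nontrivial. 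For the upper barrier one chooses $\mu$ small so that $\binom{n}{k}\mu^k + \alpha\binom{n}{k-1}\mu^{k-1} \leq \inf_\Omega \alpha_0$, using $\sup_\Omega\alpha < \infty$ and $\inf_\Omega\alpha_0 > 0$; for the lower barrier one chooses $\mu$ large so that the same expression is $\geq \sup_\Omega\alpha_0$, using $\sup_\Omega\alpha_0 < \infty$ and $\inf_\Omega\alpha > -\infty$; in both cases the $O(\rho)$ correction absorbs the mean curvature of $\partial\Omega$ once $\delta$ is small. Combining the upper barrier on $\mathcal{N}_\delta$ with a large constant yields a global supersolution $\bar u$ on $\Omega$, and the comparison principle gives $\underline{u} \leq u_j \leq \bar u$ on $\Omega_j$, so the $u_j$ are locally uniformly bounded above and below by positive functions; the lower barrier forces the eventual limit to blow up on $\partial\Omega$ with $u^{2/(n-2)} \sim c\,\rho^{-1}$, which makes the length of any curve running to $\partial\Omega$ infinite, i.e.\ $g_u = u^{4/(n-2)}g$ complete.

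With uniform local two-sided bounds in hand, interior regularity is standard: the interior gradient estimate (Theorem~\ref{global gradient estimate}, applied on balls) gives local $C^1$ bounds, Guan's interior second-order estimate \cite{Guan08} (equivalently Theorem~\ref{global second order estimate} localized, its hypotheses being met since $\alpha_0,\alpha$ are bounded with $\inf_\Omega\alpha_0 > 0$) gives local $C^2$ bounds, and then the Evans--Krylov theorem \cite{Evans,Krylov} together with Schauder estimates give local $C^{\ell,\alpha}$ bounds of every order. By the Arzel\`a--Ascoli theorem and a diagonal argument the monotone sequence $u_j$ converges in $C^\infty_{\mathrm{loc}}(\Omega)$ to a smooth $u$ solving \eqref{eq4-1} in $\Omega$; it is the maximal solution, and it is $(k-1)$-admissible because a priori $\lambda(W[u]) \in \overline{\Gamma}_{k-1}$, while $\sigma_{k-1}$ of $\tfrac{2}{n-2}u^{-(n+2)/(n-2)}W[u]$ stays bounded below on compacta (nondecreasing along the approximation and forced positive by $\inf\alpha_0 > 0$ with $\alpha$ bounded), so $\lambda(W[u]) \in \Gamma_{k-1}$; the boundary behaviour $u = \infty$ on $\partial\Omega$ and completeness come from the lower barrier of the previous paragraph. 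The step I expect to be the main obstacle is precisely the barrier construction for the full equation \eqref{eq4-1}: unlike the homogeneous problem \eqref{conformal2}, one must keep the $\alpha\,\sigma_{k-1}$-term under control as the conformal factor blows up, and verifying that the choices of $\mu$ and $\delta$ above actually produce a super- (resp.\ sub-) solution of \eqref{eq4-1}, including the curvature correction of $\partial\Omega$ and the matching on $\{\rho = \delta\}$, is where the two-sided hypotheses on $\alpha_0$ and $\alpha$ are genuinely used.
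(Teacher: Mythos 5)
Your overall architecture (Dirichlet problems on an exhausting family, two‑sided barriers that blow up like $\rho^{-(n-2)/2}$ near the codimension‑one boundary, interior $C^1$/$C^2$ estimates, Evans--Krylov, diagonal extraction) is the same as the paper's, and your computation that the radial profile produces $-g_u^{-1}\mathrm{Ric}_{g_u}\to\mu\,\mathrm{Id}$ with the small‑$\mu$/large‑$\mu$ dichotomy for the super/subsolution is essentially how the paper's Proposition \ref{Proposition6-1} and Remark \ref{Remark6-1} work. The genuine gap is the comparison step you invoke repeatedly: a comparison principle for \eqref{eq4-1} ``in the $(k-1)$-admissible class'' does \emph{not} follow from ellipticity alone when $\alpha$ may be positive. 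The zeroth‑order dependence of \eqref{eq4-4} on $v$ splits into $-\alpha_0 e^{2kv}/\sigma_{k-1}$ (good sign) and $\alpha e^{2v}/(n-2)$ (wrong sign when $\alpha>0$), so the standard touching argument fails; and the paper's substitute, Theorem \ref{Theorem6-1}, requires the \emph{subsolution side} to satisfy $\lambda(W)\in\Gamma_k$, because the proof rescales it by $1/C$ and uses Proposition \ref{Prop6-1}, whose scaling inequality reverses when $\sigma_k<0$. Your comparisons $\underline{u}\le u_j$ are fine ($\underline{u}$ is built from $W\approx\mu I>0$, hence $k$-admissible), but the crucial upper bound $u_j\le\bar u$ and the monotonicity $u_j\le u_{j+1}$ put a merely $(k-1)$-admissible function on the subsolution side, where no comparison principle is available. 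Without the upper bound you have no $C^0$ bound feeding the interior estimates, so the limit construction does not close.

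The paper resolves exactly this point by a Newton--Maclaurin reduction: since $\sigma_k\le\sigma_1^k$ and $\sigma_{k-1}\le\sigma_1^{k-1}$ on $\Gamma_{k-1}$, any $(k-1)$-admissible solution of \eqref{eq4-1} satisfies $\sigma_1\bigl(\tfrac{2}{n-2}u^{-\frac{n+2}{n-2}}W[u]\bigr)\ge\mu_0$, where $\mu_0$ is the smallest positive root of $\mu^k+\overline{\alpha}\,\mu^{k-1}=\underline{\alpha}_0$; hence every $u_j$ is a subsolution of the \emph{semilinear} problem $\sigma_1=\mu_0$, for which the classical maximum principle (Theorem \ref{MP}) applies, and is therefore dominated by the Loewner--Nirenberg solution of \eqref{eq6-2} with infinite boundary data. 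You need this device (or an equivalent one) to obtain the upper barrier; a supersolution of the full equation \eqref{eq4-1} cannot be compared with $u_j$ directly. Two smaller repairs: a positive constant is \emph{not} a subsolution of \eqref{eq4-1} (for it $W\equiv 0\notin\Gamma_{k-1}$), so your glued global subsolution is not admissible as stated --- the paper instead takes the explicit radial solution of Proposition \ref{Proposition6-1} on a large ball containing $\overline{\Omega}$ (or on the exterior of a small ball in the unbounded case); and monotonicity of $u_j$ is not actually needed once you have two‑sided bounds, since compactness plus the blow‑up of the $k$-admissible lower barrier already yields a solution with infinite boundary values.
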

The proof of Theorem \ref{Theorem6-2} relies further on the interior second order estimates for equation \eqref{eq4-1}, which are derived in Theorem \ref{interior gradient estimate}, \ref{second order interior estimate}. Also, we need to find a global upper barrier and a global lower barrier to conduct the diagonal process. The latter is given by Proposition \ref{Proposition6-1} and the former is provided by Loewner and Nirenberg \cite{Loewner}.

When $\Omega$ is a general domain in $\mathbb{R}^n$ with smooth compact boundary, in order to define the maximal solution (see Definition \ref{Maximal solution}), we have to assume that $\alpha(x) \leq 0$ in $\Omega$ so that we can apply the maximum principle Theorem \ref{Theorem6-1}. We obtain the following result.

\begin{thm} \label{Theorem1-2}
Let $\Omega \subsetneq \mathbb{R}^n$ be a domain with finitely many disjoint non-self-intersecting smooth compact submanifolds as the boundary. Suppose that $\alpha_0 (x) > 0$ and $\alpha(x)$ are real valued smooth functions in $\Omega$ satisfying
\[\alpha(x) \leq 0  \text{  in  } \Omega, \quad   \inf\limits_{\Omega} \alpha_0(x) > 0, \quad   \inf\limits_{\Omega} \alpha(x) > - \infty, \quad \sup\limits_{\Omega} \alpha_0(x) < \infty.   \]
In addition, suppose that the maximal solution $u_{\Omega}$ of \eqref{eq4-1} satisfies
\[ u_{\Omega} \not\equiv 0 \quad \text{  in  } \Omega. \]
If each component of $\partial \Omega$ has codimension $m$ satisfying $v_m \in \Gamma_k$,
then there exists a smooth complete metric $g_u = u^{\frac{4}{n - 2}} g$ satisfying \eqref{eq4-0}, or equivalently, there exists a smooth positive $k$-admissible solution to \eqref{eq4-1}--\eqref{eq1-2}. If some component has codimension $m$ satisfying $v_m \notin \overline{\Gamma}_k$, then there does not exist a complete conformal metric satisfying \eqref{eq4-0}, or equivalently, equation \eqref{eq4-1}--\eqref{eq1-2} is not solvable.
\end{thm}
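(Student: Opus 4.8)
The plan is to run the argument behind Theorem~\ref{theorem1} with the conformal $k$-Ricci curvature equation \eqref{conformal2} replaced by \eqref{eq4-1}; the assumption $\alpha\le 0$ is exactly what makes the maximum principle Theorem~\ref{Theorem6-1} available, hence what gives meaning to the maximal solution of Definition~\ref{Maximal solution}. Two preliminary reductions are worth isolating. First, if $u$ is a $(k-1)$-admissible solution of \eqref{eq4-1} then $\sigma_{k-1}\bigl(\frac{2}{n-2}u^{-\frac{n+2}{n-2}}W[u]\bigr)>0$, so, since $\alpha\le 0$ and $\inf_\Omega\alpha_0>0$,
\[
\sigma_k\bigl(\tfrac{2}{n-2}u^{-\frac{n+2}{n-2}}W[u]\bigr)=\alpha_0-\alpha\,\sigma_{k-1}\bigl(\tfrac{2}{n-2}u^{-\frac{n+2}{n-2}}W[u]\bigr)\ \ge\ \inf_{\Omega}\alpha_0\ >\ 0 ,
\]
whence $\lambda(W[u])\in\Gamma_k$: every solution produced below is automatically $k$-admissible, as asserted. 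Second, Maclaurin's inequality $\sigma_1\ge C\,\sigma_k^{1/k}$ on $\Gamma_k$, combined with the identity $\sigma_1(W[u])=2(n-1)\Delta u$ (valid because $\text{Ric}_g\equiv 0$), shows that such a $u$ obeys $\Delta u\ge c_0\,u^{\frac{n+2}{n-2}}$ for some $c_0=c_0(n,k,\inf_\Omega\alpha_0)>0$, i.e.\ $u$ is a subsolution of a rescaled Loewner--Nirenberg equation \eqref{scalar_eqn}; this supplies the upper barrier.

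Next I would exhaust $\Omega$. Write $\partial\Omega=\Sigma\cup\Gamma$ with $\Sigma$ the union of the hypersurface components and $\Gamma$ the union of the higher-codimension ones, and for small $\epsilon>0$ set $\Omega_\epsilon=\Omega\setminus\overline{N_\epsilon(\Gamma)}$; for a.e.\ such $\epsilon$ its boundary $\Sigma\cup\partial N_\epsilon(\Gamma)$ consists of smooth compact hypersurfaces, so Theorem~\ref{Theorem6-2}, whose hypotheses hold ($\sup_\Omega\alpha\le 0<\infty$ being free), gives a smooth complete $(k-1)$-admissible (hence $k$-admissible) solution $u_\epsilon$ of \eqref{eq4-1} on $\Omega_\epsilon$ with $u_\epsilon=\infty$ on $\partial\Omega_\epsilon$. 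For $\epsilon'<\epsilon$ the function $u_{\epsilon'}$ is finite on the inner tube boundary $\partial N_\epsilon(\Gamma)$, where $u_\epsilon=\infty$, while both are $\infty$ on $\Sigma$; hence Theorem~\ref{Theorem6-1} gives $u_{\epsilon'}\le u_\epsilon$ on $\Omega_\epsilon$, the family $\{u_\epsilon\}$ decreases as $\epsilon\downarrow 0$, and $u_\Omega:=\lim_{\epsilon\downarrow 0}u_\epsilon$ is the maximal solution of Definition~\ref{Maximal solution}, assumed $\not\equiv 0$. Comparing the subsolution $u_\epsilon$ of the rescaled \eqref{scalar_eqn} with the Loewner--Nirenberg maximal solution on $\Omega_\epsilon$ — which exists since $v_m\in\Gamma_k$ forces $\sigma_1(v_m)=(n-1)(n+2-2m)>0$, i.e.\ codimension $m<\frac n2+1$, and which itself decreases to a finite solution on $\Omega$ as $\epsilon\downarrow 0$ — gives a locally uniform upper bound $u_\epsilon\le C$ on compact subsets of $\Omega$.

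With this uniform upper bound, the interior gradient estimate Theorem~\ref{interior gradient estimate} yields $u_\epsilon\to u_\Omega$ locally in $C^{0,\beta}$ with $u_\Omega\ge 0$ solving \eqref{eq4-1}; since $u_\Omega$ is admissible it is subharmonic, and the strong maximum principle applied near a hypothetical interior zero (where the reaction term is controlled) promotes $u_\Omega\not\equiv 0$ to $u_\Omega>0$ throughout $\Omega$. Then $u_\epsilon\ge u_\Omega\ge\inf_K u_\Omega>0$ on each compact $K\subset\Omega$, so the interior second order estimate Theorem~\ref{second order interior estimate} provides locally uniform $C^2$ bounds; Evans--Krylov \cite{Evans,Krylov} and Schauder theory then give locally uniform $C^\infty$ bounds, so $u_\epsilon\to u_\Omega$ in $C^\infty_{\mathrm{loc}}(\Omega)$ and $u_\Omega$ is a smooth positive $k$-admissible solution of \eqref{eq4-1}. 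It remains to verify $u_\Omega=\infty$ on $\partial\Omega$ and completeness of $g_{u_\Omega}$: near each hypersurface component one compares $u_\epsilon$ from below, on a one-sided collar, with a local subsolution blowing up like $\text{dist}(\cdot,\partial\Omega)^{-\frac{n-2}{2}}$ as in the proof of Theorem~\ref{Theorem6-2}; near each codimension-$m$ component one uses the lower barrier of Proposition~\ref{Proposition6-1}, an admissible subsolution on a punctured neighborhood of $\Gamma$ growing like $\text{dist}(\cdot,\Gamma)^{-\frac{n-2}{2}}$ — available precisely because $v_m\in\Gamma_k$ — and passes $\underline v\le u_\epsilon$ to the limit; completeness toward infinity when $\Omega$ is unbounded is already built into Theorem~\ref{Theorem6-2} and Proposition~\ref{Proposition6-1}. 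This completes the existence half, and $g_{u_\Omega}=u_\Omega^{4/(n-2)}g$ is the desired metric.

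For the nonexistence half, suppose some component $\Gamma$ has codimension $m$ with $v_m\notin\overline{\Gamma}_k$ while a complete conformal metric $g_u$ satisfies \eqref{eq4-0}. As in the first paragraph, $\alpha\le 0$ and $\inf_\Omega\alpha_0>0$ force $-g_u^{-1}\text{Ric}_{g_u}\in\Gamma_k$ with $\sigma_k(-g_u^{-1}\text{Ric}_{g_u})\ge\inf_\Omega\alpha_0>0$, so a Keller--Osserman-type estimate near $\Gamma$, exploiting admissibility together with this positive lower bound on $\sigma_k$ exactly as in \cite{Gonzalez-Li-Nguyen} (and in \cite{Loewner,Aviles,Veron} for $k=1$), bounds $u(x)\le C\,\text{dist}(x,\Gamma)^{-a_0}$ near $\Gamma$ with an exponent $a_0<\frac{n-2}{2}$ whenever $v_m\notin\overline{\Gamma}_k$; but then every curve approaching $\Gamma$ has finite $g_u$-length, contradicting completeness. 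The lower-order term $\alpha\,\sigma_{k-1}$ does not interfere, since $\alpha\le 0$ only reinforces the admissibility and the lower bound on $\sigma_k$ on which this argument rests. The step I expect to be the main obstacle is the passage $\epsilon\downarrow 0$ in the existence half — converting the bare hypothesis $u_\Omega\not\equiv 0$ into a genuine local positive lower bound on the $u_\epsilon$ before the interior $C^2$ estimates become applicable — and it is precisely the absence of a uniform barrier surviving at the cone boundary that leaves the borderline case $v_m\in\partial\Gamma_k$ open.
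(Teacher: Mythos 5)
Your overall architecture is the paper's: exhaust $\Omega$, produce blow-up solutions on the approximating domains via Theorem~\ref{Theorem6-2}, pass to the decreasing limit to get the maximal solution, establish regularity of the boundary via the local subsolution $c\,\rho^{1-\frac n2}$ when $v_m\in\Gamma_k$ (Theorem~\ref{Theorem6-3}), and rule out completeness via the supersolution $(c\rho^{-a}+d)^b$ of Gonz\'alez--Li--Nguyen when $v_m\notin\overline{\Gamma}_k$. The minor variations (removing tubes around the higher-codimension components instead of a generic bounded exhaustion; using the Loewner--Nirenberg solution as a global upper barrier rather than only interior ball comparisons) are harmless and essentially equivalent to what the paper does.

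There is, however, one step whose justification as written is wrong: the promotion of $u_\Omega\not\equiv 0$ to $u_\Omega>0$. You argue that $u_\Omega$ is subharmonic (true, since $\sigma_1(W[u])=2(n-1)\Delta u\ge 0$ for admissible $u$) and then invoke the strong maximum principle at a hypothetical interior zero. But subharmonicity yields a \emph{maximum} principle, not a minimum principle: a nonnegative subharmonic function may perfectly well vanish at interior points (e.g.\ $|x|^2$), so an interior zero of $u_\Omega$ is not excluded this way. Moreover, at such a point the equation degenerates (the factor $u^{-\frac{n+2}{n-2}}$ blows up), so one cannot straightforwardly ``control the reaction term'' and apply a Hopf-type argument to the limit. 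The paper's Lemma~\ref{Lemma6-1} closes this gap by a barrier argument instead: if $u_\Omega>c$ on some ball $B(x_0,r_0)$, the explicit exterior solution of Proposition~\ref{Proposition6-1} (normalized via Remark~\ref{Remark6-1} so that it equals $c$ on $\partial B(x_0,r_0)$) is a lower barrier for every $u_{(j)}$ on $\Omega_{(j)}\setminus B(x_0,r_0)$ by Theorem~\ref{Theorem6-1}, whence $u_\Omega>0$ everywhere. You should replace the strong-maximum-principle step by this comparison. A smaller remark: in the nonexistence half you defer the quantitative upper bound $u\le C\,\rho^{-a_0}$ to a ``Keller--Osserman-type estimate''; the paper actually carries out the computation with $\psi=(c\rho^{-a}+d)^b$, choosing $a$ small and $ab$ slightly larger than $\frac n2-1$ so that $\lambda(W[\psi])\notin\overline{\Gamma}_k$ for all $c,d>0$, and then sends $c\to 0$ to conclude $u_\Omega\le d^b$ is \emph{bounded} near $\Gamma$ --- a cleaner conclusion than a subcritical power bound, though both give finite length and hence incompleteness.
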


This paper is organized as follows. We prove Theorem \ref{theorem2} in section 2 and Theorem \ref{theorem1} in section 3. Section 4 is devoted to $C^2$ a priori estimates and interior $C^2$ estimates for equation \eqref{eq4-1}. Then we apply continuity method and degree theory to prove Theorem \ref{Theorem1-1}. In section 5, we discuss the fully nonlinear Loewner-Nirenberg problem associated to equation \eqref{eq4-1} and give the proof of Theorem \ref{Theorem6-2} and Theorem \ref{Theorem1-2}.

\medskip
\noindent
{\bf Acknowledgements} \quad
The author would like to express the deep thanks to the reviewer for the valuable suggestions and references. The author is supported by National Natural Science Foundation of China (No. 12001138).

\vspace{4mm}

\section{Solutions on Euclidean domains with smooth compact boundary consisting of closed hypersurfaces}

\vspace{4mm}

In this paper, we shall mainly discuss admissible solutions to equation \eqref{conformal2}-\eqref{eq1-2} in Euclidean domains which have smooth compact boundaries. Within this section, we focus on the case when these boundaries are composed of closed hypersurfaces.

\vspace{2mm}

\subsection{Preliminaries}~

\vspace{2mm}

\begin{defn}
A function $0 < \underline{u} \in C^2(\Omega)$ is a subsolution of \eqref{conformal2} in $\Omega$ if
\[  \lambda\big( W [\underline{u}] \big) \in \Gamma_k  \text{    and    } \sigma_k^{\frac{1}{k}} \big( W [ \underline{u} ] \big)  \geq  \frac{n - 2}{2} \underline{u}^{\frac{n + 2}{n - 2}} \quad \text{  in  }  \Omega.     \]
A function $0 < \overline{u} \in C^2(\Omega)$ is a supersolution of \eqref{conformal2} in $\Omega$ if
\[ \text{either  } \lambda\big( W [\overline{u}] \big) \notin \Gamma_k  \text{  or  }
\sigma_k^{\frac{1}{k}} \big( W [ \overline{u} ] \big)  \leq  \frac{n - 2}{2} \overline{u}^{\frac{n + 2}{n - 2}} \quad \text{  in  }  \Omega.     \]
\end{defn}

In view of the form of equation \eqref{conformal2}, we first give a general property on supersolutions and subsolutions.
\begin{prop} \label{prop}
If $u$ is a positive subsolution of \eqref{conformal2}, so does $c u$ for any constant $0 < c < 1$.  If $u$ is a positive supersolution of \eqref{conformal2}, so does $c u$ for any constant $c > 1$; if in addition $\lambda\big( W [u] \big) \notin \Gamma_k$ everywhere in $\Omega$, then $c u$ is a supersolution of \eqref{conformal2} for any $c > 0$.
\end{prop}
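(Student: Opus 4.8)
The plan is to extract everything from a single structural observation about the operator \eqref{wu}, namely that $W[\cdot]$ is positively homogeneous of degree one in $u$. First I would substitute $cu$ (with $c>0$ a constant) into \eqref{wu} and check termwise that $W[cu]=c\,W[u]$: the terms $(n-2)\nabla^2(cu)$ and $\tfrac{n-2}{2}(cu)\mathrm{Ric}_g$ are obviously $c$ times the corresponding terms for $u$; for the quadratic gradient terms one uses $d(cu)\otimes d(cu)=c^2\,du\otimes du$ together with $\tfrac{n}{cu}=\tfrac1c\cdot\tfrac{n}{u}$, so the factors $c^2$ and $1/c$ combine to $c$, and similarly $\Delta(cu)+\tfrac{|\nabla(cu)|^2}{cu}=c\bigl(\Delta u+\tfrac{|\nabla u|^2}{u}\bigr)$. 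Hence $\lambda\bigl(W[cu]\bigr)=c\,\lambda\bigl(W[u]\bigr)$ pointwise in $\Omega$.

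Next I would record the two elementary consequences that drive the proof. Since $\Gamma_k$ is an open cone, $c\lambda\in\Gamma_k\iff\lambda\in\Gamma_k$ for every $c>0$; and since $\sigma_k$ is homogeneous of degree $k$, $\sigma_k^{1/k}(c\lambda)=c\,\sigma_k^{1/k}(\lambda)$ whenever $\lambda\in\overline{\Gamma}_k$. Combined with $W[cu]=c\,W[u]$, this gives at every point of $\Omega$
\[
\lambda\bigl(W[cu]\bigr)\in\Gamma_k\iff\lambda\bigl(W[u]\bigr)\in\Gamma_k,\qquad \sigma_k^{1/k}\bigl(W[cu]\bigr)=c\,\sigma_k^{1/k}\bigl(W[u]\bigr).
\]
The last ingredient is superlinearity of the right-hand side of \eqref{conformal2}: with $p:=\tfrac{n+2}{n-2}>1$ (since $n\ge3$) one has $(cu)^p=c^p u^p$ with $c^p\le c$ for $0<c\le1$ and $c^p\ge c$ for $c\ge1$.

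With these in hand the three assertions follow directly. For the subsolution statement, if $u$ is a positive subsolution and $0<c<1$, then $\lambda(W[cu])=c\,\lambda(W[u])\in\Gamma_k$ and $\sigma_k^{1/k}(W[cu])=c\,\sigma_k^{1/k}(W[u])\ge c\cdot\tfrac{n-2}{2}u^p\ge c^p\cdot\tfrac{n-2}{2}u^p=\tfrac{n-2}{2}(cu)^p$. For the supersolution statement I would argue pointwise using the dichotomy in the definition: at a point where $\lambda(W[u])\notin\Gamma_k$ the cone property gives $\lambda(W[cu])\notin\Gamma_k$ for \emph{every} $c>0$; at a point where $\lambda(W[u])\in\Gamma_k$ and $\sigma_k^{1/k}(W[u])\le\tfrac{n-2}{2}u^p$, for $c\ge1$ one gets $\sigma_k^{1/k}(W[cu])=c\,\sigma_k^{1/k}(W[u])\le c\cdot\tfrac{n-2}{2}u^p\le c^p\cdot\tfrac{n-2}{2}u^p=\tfrac{n-2}{2}(cu)^p$. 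Thus $cu$ is a supersolution for $c>1$, and if $\lambda(W[u])\notin\Gamma_k$ throughout $\Omega$ only the first alternative ever occurs, so $cu$ is a supersolution for all $c>0$.

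I do not anticipate a real obstacle: the whole content is the degree-one homogeneity of $W[\cdot]$, which is immediate from \eqref{wu}, together with $p>1$. The only point needing mild care is the bookkeeping in the supersolution case, where the definition is a pointwise disjunction, so one must treat the two alternatives separately and invoke the cone property $c\lambda\in\Gamma_k\Leftrightarrow\lambda\in\Gamma_k$ for all $c>0$ — this is precisely what delivers the final clause about arbitrary $c>0$.
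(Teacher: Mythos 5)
Your proof is correct and follows essentially the same route as the paper's: verify the degree-one homogeneity $W[cu]=c\,W[u]$ from \eqref{wu}, use the cone property of $\Gamma_k$ and the $k$-homogeneity of $\sigma_k$, and compare $c$ with $c^{\frac{n+2}{n-2}}$ on the right-hand side. The paper's proof is a one-line appeal to these facts; you merely make explicit the superlinearity step and the pointwise case analysis for supersolutions, both of which are correct.
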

\begin{proof}
The conclusion follows from the fact that
$W[c u] = c  W[u]$ and
\[ \sigma_k^{\frac{1}{k}} ( W[ c u ] ) = \sigma_k^{\frac{1}{k}} (c W[u]) = c \sigma_k^{\frac{1}{k}} (W[u]) \quad \text{if  } \lambda\big( W[ u ] \big) \in \Gamma_k. \]
\end{proof}

We will also need the following maximum principle. The proof is similar to \cite{Gursky-Streets-Warren}.
\begin{thm} \label{MP}
Let $M$ be a smooth compact manifold with boundary. Suppose that $u$ and $v$ are $C^2$ positive subsolution and supersolution of \eqref{conformal2} respectively on $M$. If $u \leq v$ on $\partial M$, then $u \leq v$ on $M$.
\end{thm}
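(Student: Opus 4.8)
The plan is to run a multiplicative comparison argument. Recall from Proposition \ref{prop} (and the identity $W[cv] = cW[v]$ used there) that if $v$ is a positive supersolution of \eqref{conformal2}, then so is $tv$ for every $t \ge 1$. Since $M$ is compact and $u,v$ are positive and continuous, the set $T = \{\, t \ge 1 : u \le tv \text{ on } M \,\}$ is nonempty, closed and upward closed, so $T = [t_0,\infty)$ for some $t_0 \ge 1$; it suffices to prove $t_0 = 1$, for then $u \le v$ on $M$. Suppose instead $t_0 > 1$. Then $u \le t_0 v$ on $M$, and by minimality of $t_0$ together with compactness there is a point $x_0 \in M$ with $u(x_0) = t_0 v(x_0)$. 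Since $u \le v < t_0 v$ on $\partial M$, the point $x_0$ must be interior.

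At $x_0$ the function $u - t_0 v$ attains an interior maximum, so $u(x_0) = t_0 v(x_0)$, $\nabla u(x_0) = t_0 \nabla v(x_0)$, and $\nabla^2 u(x_0) \le t_0 \nabla^2 v(x_0)$. The key observation — and the reason the comparison must be multiplicative rather than additive — is that every term in $W[\cdot]$ in \eqref{wu} other than the Hessian and Laplacian terms is homogeneous of degree one and depends only on the value and the gradient; hence at $x_0$ the tensors $-\frac{n}{u} du\otimes du$, $\frac{|\nabla u|^2}{u} g$ and $-\frac{n-2}{2} u\, \text{Ric}_g$ built from $u$ coincide exactly with those built from $t_0 v$. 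Therefore
\[ W[u](x_0) - W[t_0 v](x_0) = g^{-1}\big( N + (\mathrm{tr}_g N)\, g \big)(x_0), \qquad N := (n-2)\big(\nabla^2 u - t_0 \nabla^2 v\big)(x_0) \le 0 . \]
Since $N$ is negative semidefinite, so is $N + (\mathrm{tr}_g N)g$ (its eigenvalues are $\mu_i + \sum_j \mu_j \le 0$), and hence $W[u](x_0) \le W[t_0 v](x_0) = t_0\, W[v](x_0)$ as symmetric bilinear forms.

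Next I would invoke the standard monotonicity of the Gårding cone and of $\sigma_k^{1/k}$: if $\lambda(A) \in \Gamma_k$ and $B \ge A$, then $\lambda(B) \in \Gamma_k$ and $\sigma_k^{1/k}(B) \ge \sigma_k^{1/k}(A)$. Applying this with $A = W[u](x_0)$, which is admissible because $u$ is a subsolution, and $B = t_0 W[v](x_0)$, we get $\lambda(t_0 W[v](x_0)) \in \Gamma_k$; as $\Gamma_k$ is a cone this forces $\lambda(W[v](x_0)) \in \Gamma_k$, so the first alternative in the definition of supersolution is excluded and $\sigma_k^{1/k}(W[v](x_0)) \le \frac{n-2}{2} v(x_0)^{\frac{n+2}{n-2}}$. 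Combining
\[ \tfrac{n-2}{2}\, u(x_0)^{\frac{n+2}{n-2}} \le \sigma_k^{1/k}\big(W[u](x_0)\big) \le \sigma_k^{1/k}\big(t_0 W[v](x_0)\big) = t_0\, \sigma_k^{1/k}\big(W[v](x_0)\big) \le t_0\cdot \tfrac{n-2}{2}\, v(x_0)^{\frac{n+2}{n-2}} \]
with the strict inequality $t_0\, v(x_0)^{\frac{n+2}{n-2}} < \big(t_0 v(x_0)\big)^{\frac{n+2}{n-2}} = u(x_0)^{\frac{n+2}{n-2}}$, valid since $t_0 > 1$ and $\frac{n+2}{n-2} > 1$, yields $\frac{n-2}{2} u(x_0)^{\frac{n+2}{n-2}} < \frac{n-2}{2} u(x_0)^{\frac{n+2}{n-2}}$, a contradiction. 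Hence $t_0 = 1$ and $u \le v$ on $M$.

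The argument is essentially routine once the scaled supersolutions $tv$ are brought in; the one point that genuinely requires care is the bookkeeping at $x_0$ showing that the zeroth- and first-order terms of $W[u]$ and $W[t_0 v]$ cancel — precisely the step that fails for the naive comparison $u \le v$, and what dictates the multiplicative normalization. The two alternatives in the definition of supersolution cause no real trouble: the non-admissible case is killed immediately by monotonicity of $\Gamma_k$, after which the superlinearity of the right-hand side closes the estimate.
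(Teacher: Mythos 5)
Your proof is correct and is essentially the paper's argument: both rest on the homogeneity $W[cu]=cW[u]$ (Proposition \ref{prop}), a touching-point analysis at the maximum of $u/v$ (you slide $t v$ down to the first contact, the paper rescales $u$ by $1/\max(u/v)$ and compares with $v$), and the strict superlinearity of the right-hand side, with your version merely being more explicit about the cancellation of the zeroth- and first-order terms and about excluding the non-admissible alternative in the supersolution definition. (One harmless slip: since the Laplacian enters $W$ with coefficient $1$ while $N$ carries the factor $n-2$, the difference formula should read $N + \tfrac{1}{n-2}(\mathrm{tr}_g N)\, g$ rather than $N + (\mathrm{tr}_g N)\, g$; either way the matrix is negative semidefinite, so the conclusion stands.)
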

\begin{proof}
Suppose that $u > v$ somewhere in the interior of $M$. Let $C$ be the maximum of $\frac{u}{v}$ on $M$, which is attained at $x_0$ in the interior of $M$. Since $C > 1$, by Theorem \ref{prop} we know that $w = \frac{u}{C}$ is a strict subsolution, that is,
\begin{equation*}
\sigma_k^{\frac{1}{k}} ( W[w] ) > \frac{n - 2}{2} w^{\frac{n + 2}{n - 2}}.
\end{equation*}
On the other hand, since $w(x_0) = v(x_0)$ while $w \leq v$ near $x_0$, thus at $x_0$ we have
\[\nabla w (x_0) = \nabla v (x_0),\quad \nabla^2 w (x_0) \leq \nabla^2 v (x_0)\]
and consequently $W[w](x_0) \leq W[v](x_0)$. It follows that
\[\sigma_k^{\frac{1}{k}} ( W[v] )(x_0) \geq \sigma_k^{\frac{1}{k}} ( W[w] ) (x_0) > \frac{n - 2}{2} w^{\frac{n + 2}{n - 2}} (x_0) = \frac{n - 2}{2} v^{\frac{n + 2}{n - 2}}(x_0), \]
contradicting with the fact that $v$ is a supersolution.
\end{proof}

Next, we provide some special solutions on balls and exterior of balls in $\mathbb{R}^n$ of equation \eqref{conformal2}, which will be used frequently as supersolutions or subsolutions.
In Euclidean coordinates, $W[u]$ can be expressed as
\begin{equation} \label{Wij}
W_{ij} [ u ] = (n - 2) u_{ij} - n \frac{u_i u_j}{u} + \Big( \Delta u + \frac{|\nabla u|^2}{u} \Big) \delta_{ij}.
\end{equation}
Let
\[ B_R(x_0) = \big\{ x \in \mathbb{R}^n  \big\vert  |x - x_0| < R \big\}. \]
We can verify the following proposition.

\begin{prop} \label{subsolution}
For any fixed $s > 0$,
\begin{equation*}
 u (x) =  \Big( 4 (n - 1) (C_n^k)^{\frac{1}{k}} s^2 \Big)^{\frac{n - 2}{4}} \Big( s^2 - {|x - x_0|}^2 \Big)^{1 - \frac{n}{2}} \quad \text{ in  }  B_s (x_0),
\end{equation*}
\begin{equation*}
v (x) = \Big( 4 (n - 1) (C_n^k)^{\frac{1}{k}}  s^2 \Big)^{\frac{n - 2}{4}} \Big( {|x - x_0|}^2 - s^2 \Big)^{1 - \frac{n}{2}} \quad \text{  in  } \mathbb{R}^n\setminus \overline{B_s (x_0)}
\end{equation*}
are admissible solutions of \eqref{conformal2} which approach to $\infty$ on $\partial B_s (x_0)$.
\end{prop}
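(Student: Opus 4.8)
\medskip

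The plan is a direct verification. By translation invariance we may assume $x_0 = 0$; write $r = |x|$, put $\beta = 1 - \frac{n}{2}$ and $A = \big( 4(n-1)(C_n^k)^{\frac{1}{k}} s^2 \big)^{\frac{n-2}{4}}$, and set $f = s^2 - r^2$, so that $u = A f^{\beta}$ on $B_s(0)$. First I would differentiate: using $f_i = -2x_i$, $f_{ij} = -2\delta_{ij}$, $|\nabla f|^2 = 4r^2$, each of $u_i$, $u_{ij}$, $\Delta u$ and $|\nabla u|^2$ becomes an explicit combination of the terms $x_i x_j f^{\beta-2}$, $f^{\beta-1}\delta_{ij}$, $r^2 f^{\beta-2}$, etc. Substituting these into the Euclidean expression \eqref{Wij} for $W[u]$, the rank-one contributions collect with a coefficient proportional to $2\beta + (n-2)$, which vanishes for our choice of $\beta$; the surviving diagonal terms then collapse, using the identity $r^2 + f = s^2$, to
\[ W_{ij}[u] = 2(n-1)(n-2)\, A\, s^2\, f^{\beta - 2}\, \delta_{ij}. \]
Geometrically this just says that $g_u$ is a ball model of hyperbolic space, i.e. an Einstein metric.

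Since $f > 0$ on $B_s(0)$, the matrix $W[u]$ is a positive multiple of the identity, so $\lambda(W[u]) = (c,\dots,c)$ with $c = 2(n-1)(n-2)As^2 f^{\beta-2} > 0$, which lies in $\Gamma_k$ for every $k$; hence $u$ is admissible. Then $\sigma_k^{\frac 1k}(W[u]) = (C_n^k)^{\frac 1k} c$, and it remains only to match this against $\frac{n-2}{2} u^{\frac{n+2}{n-2}} = \frac{n-2}{2} A^{\frac{n+2}{n-2}} f^{\beta \frac{n+2}{n-2}}$. The powers of $f$ agree because $\beta - 2 = -\frac{n+2}{2} = \beta \cdot \frac{n+2}{n-2}$, and the scalar constants agree exactly when $A^{\frac{4}{n-2}} = 4(n-1)(C_n^k)^{\frac 1k} s^2$ --- which is precisely the normalization defining $A$. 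This proves $u$ solves \eqref{conformal2}. For $v$ I would repeat the computation verbatim with $f = r^2 - s^2 > 0$ on $\mathbb{R}^n \setminus \overline{B_s(0)}$: the only changes are the signs of $f_i$ and $f_{ij}$, the rank-one term again drops out since $2\beta + (n-2) = 0$, and one uses $r^2 - f = s^2$ to arrive at the same formula $W_{ij}[v] = 2(n-1)(n-2)As^2 f^{\beta-2}\delta_{ij}$, after which the identical check applies. Finally, since $\beta = 1 - \frac n2 < 0$ and $f \to 0^+$ as $x \to \partial B_s(0)$ in both cases, $u$ and $v$ tend to $\infty$ on $\partial B_s(0)$.

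There is no substantial obstacle: the whole statement is a computation. The one point worth isolating is the algebraic coincidence that the exponent $\beta = 1 - \frac n2$ annihilates the rank-one part of $W[u]$, turning $W[u]$ into a scalar multiple of the identity. This is what makes $\sigma_k^{\frac1k}(W[u])$ computable explicitly and simultaneously for all $k$, and reduces the verification of \eqref{conformal2} to equating a single scalar, which then forces exactly the stated value of the constant $A$.
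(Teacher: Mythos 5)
Your verification is correct and follows the same route as the paper: a direct computation showing $W_{ij}[u]=2(n-1)(n-2)A s^2 f^{\beta-2}\delta_{ij}$ (the paper states exactly this identity and then matches constants, treating $v$ "similarly"). Your added observation that the exponent $\beta=1-\tfrac n2$ kills the rank-one part is the correct mechanism behind the paper's one-line "direct calculation," and all the exponent and constant matches check out.
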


\begin{proof}
We first prove that $u(x)$ is an admissible solution of \eqref{conformal2}.
For convenience, denote
\[ c: = \Big( 4 (n - 1) (C_n^k)^{\frac{1}{k}} s^2 \Big)^{\frac{n - 2}{4}}, \quad x = (x^1, \ldots, x^n), \quad x_0 = (x_0^1, \ldots, x_0^n). \]
Direct calculation shows that
\[ W_{ij} [u] = 2 (n - 1) (n - 2) c \Big( s^2 - |x - x_0|^2 \Big)^{- \frac{n}{2} - 1} s^2  \delta_{ij}. \]
Hence
\[ \sigma_k^{\frac{1}{k}}\big( W[u] \big) = 2 (n - 1) (n - 2) c \Big( s^2 - |x - x_0|^2 \Big)^{- \frac{n}{2} - 1} s^2 \big( C_n^k \big)^{\frac{1}{k}}, \]
which agrees with the right hand side
\[ \frac{n - 2}{2} u^{\frac{n + 2}{n - 2}} = \frac{n - 2}{2} c^{\frac{n + 2}{n - 2}} \Big( s^2 - |x - x_0|^2 \Big)^{ - \frac{n}{2} - 1 }. \]
$v(x)$ can be verified similarly.
\end{proof}

The following result is a direct consequence of Proposition \ref{subsolution} and Theorem \ref{MP}.
\begin{cor}
For any positive admissible solution $u$ of \eqref{conformal2} in $B_R(x_0)$, we have
\begin{equation} \label{ball}
 u(x) \leq \Big( 4 (n - 1) (C_n^k)^{\frac{1}{k}} R^2 \Big)^{\frac{n - 2}{4}} \Big( R^2 - {|x - x_0|}^2 \Big)^{1 - \frac{n}{2}} \quad\text{  in  } B_R(x_0).
\end{equation}
In particular, we have
\begin{equation} \label{eq1_cor}
 u(x_0) \leq  \Big( 4 (n - 1) (C_n^k)^{\frac{1}{k}} \Big)^{\frac{n - 2}{4}} R^{1 - \frac{n}{2}}.
\end{equation}
\end{cor}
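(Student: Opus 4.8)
The plan is to exhaust $B_R(x_0)$ by slightly smaller concentric balls, compare $u$ on each of them with the explicit blow-up solution furnished by Proposition \ref{subsolution}, and then pass to the limit. Note first that since $u$ is a positive admissible solution of \eqref{conformal2}, it is simultaneously a subsolution and a supersolution in the sense of the definition above, and likewise each $u_{R'}$ introduced below is both.

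First I would fix $x \in B_R(x_0)$, choose a radius $R' \in (|x-x_0|, R)$, and apply Proposition \ref{subsolution} with $s = R'$ to obtain the admissible solution
\[ u_{R'}(y) = \Big( 4 (n-1)(C_n^k)^{\frac1k}(R')^2 \Big)^{\frac{n-2}{4}} \Big( (R')^2 - |y-x_0|^2 \Big)^{1-\frac n2} \qquad \text{in } B_{R'}(x_0), \]
which blows up on $\partial B_{R'}(x_0)$. For any $R'' \in (|x-x_0|, R')$ the function $u_{R'}$ is smooth on the compact manifold-with-boundary $\overline{B_{R''}(x_0)} \subset B_R(x_0)$, and $u$ is continuous (hence bounded) there. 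Since $u_{R'}$ is radial, its value on $\partial B_{R''}(x_0)$ is the constant $\big( 4(n-1)(C_n^k)^{1/k}(R')^2 \big)^{(n-2)/4}\big((R')^2 - (R'')^2\big)^{1-n/2}$, which tends to $+\infty$ as $R'' \uparrow R'$, while $\sup_{\overline{B_{R'}(x_0)}} u < \infty$; hence one may fix $R''$ close enough to $R'$ so that $u \le u_{R'}$ on $\partial B_{R''}(x_0)$. Applying Theorem \ref{MP} on $\overline{B_{R''}(x_0)}$ with subsolution $u$ and supersolution $u_{R'}$ then gives $u \le u_{R'}$ on $B_{R''}(x_0)$, and in particular $u(x) \le u_{R'}(x)$.

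Since this holds for every $R' \in (|x-x_0|, R)$, letting $R' \uparrow R$ and observing that $u_{R'}(x) \to \big( 4(n-1)(C_n^k)^{1/k}R^2 \big)^{(n-2)/4}\big(R^2 - |x-x_0|^2\big)^{1-n/2}$ yields \eqref{ball}. Finally, setting $x = x_0$ in \eqref{ball} and simplifying $R^{(n-2)/2}\cdot R^{2-n} = R^{1-n/2}$ gives \eqref{eq1_cor}. The only point requiring attention — and the reason Theorem \ref{MP} cannot be invoked directly on $\overline{B_R(x_0)}$ — is that $u_R$ fails to be $C^2$ (indeed fails to be finite) up to $\partial B_R(x_0)$; passing first to radius $R'$ and then to $R''$ circumvents this, using only that an admissible solution on an open ball is automatically locally bounded. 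Beyond this bookkeeping there is no genuine obstacle, as the comparison is precisely the one already packaged in Theorem \ref{MP}.
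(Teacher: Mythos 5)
Your proof is correct and follows exactly the route the paper intends (the paper simply asserts the corollary is a direct consequence of Proposition \ref{subsolution} and Theorem \ref{MP} without writing out the details); your two-stage approximation with radii $R''<R'<R$ is the standard and correct way to handle the blow-up of the comparison function on $\partial B_{R}(x_0)$ so that Theorem \ref{MP} applies on a compact region.
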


As a consequence of \eqref{eq1_cor}, we immediately obtain
\begin{cor}
There does not exist a positive admissible solution to \eqref{conformal2} on $\mathbb{R}^n$.
\end{cor}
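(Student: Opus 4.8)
The plan is to exploit the pointwise bound \eqref{eq1_cor} already established in the previous corollary, letting the radius tend to infinity. Concretely, suppose for contradiction that $u$ is a positive admissible solution of \eqref{conformal2} on all of $\mathbb{R}^n$. Fix an arbitrary point $x_0 \in \mathbb{R}^n$. Then for every $R > 0$, the restriction of $u$ to $B_R(x_0)$ is a positive admissible solution of \eqref{conformal2} in $B_R(x_0)$, so \eqref{eq1_cor} applies and yields
\[ u(x_0) \leq \Big( 4 (n - 1) (C_n^k)^{\frac{1}{k}} \Big)^{\frac{n - 2}{4}} R^{1 - \frac{n}{2}}. \]

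Now observe that since $n \geq 3$, the exponent $1 - \frac{n}{2}$ is strictly negative, hence $R^{1 - \frac{n}{2}} \to 0$ as $R \to \infty$. Letting $R \to \infty$ in the displayed inequality gives $u(x_0) \leq 0$, which contradicts $u(x_0) > 0$. Since $x_0$ was arbitrary, no such solution can exist.

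The argument is entirely elementary once \eqref{eq1_cor} is in hand; there is no real obstacle. I would only remark that the whole weight of the proof sits in the chain \mbox{Proposition \ref{subsolution}} (the explicit radial barriers $u(x)$ on balls) followed by the comparison principle \mbox{Theorem \ref{MP}}, which together give \eqref{ball} and then \eqref{eq1_cor}; the passage to the limit $R \to \infty$ is the only new input here and uses nothing beyond $n \geq 3$. One should state explicitly that admissibility and the equation are inherited by restriction to subdomains, which is immediate from the local nature of both conditions.
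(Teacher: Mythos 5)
Your proof is correct and follows exactly the paper's argument: restrict to a ball, apply \eqref{eq1_cor}, and let $R \to \infty$ to force $u \leq 0$ at the center, contradicting positivity. The only cosmetic difference is that the paper applies the bound at the origin while you apply it at an arbitrary point $x_0$; either suffices.
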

\begin{proof}
If $u$ is a positive admissible solution to \eqref{conformal2} on $\mathbb{R}^n$, then $u$ is obviously a positive admissible solution on $B_R(0)$ for any $R > 0$. By \eqref{eq1_cor} we have
\[  u(0) \leq  \Big( 4 (n - 1) (C_n^k)^{\frac{1}{k}} \Big)^{\frac{n - 2}{4}} R^{1 - \frac{n}{2}}. \]
Letting $R \rightarrow \infty$ yields
\[ u (0) \leq 0, \]
which contradicts with the fact that $u (0) > 0$.
\end{proof}

We also find the decay property at $\infty$ of any positive admissible solution of \eqref{conformal2}.
\begin{cor} \label{decay}
Let $\Omega$ be an unbounded domain in $\mathbb{R}^n$ with smooth compact boundary.
If $u$ is a smooth positive admissible solution of \eqref{conformal2}, then
\begin{equation} \label{decaying}
u(x) \rightarrow 0 \quad \text{ as } \,\, |x| \rightarrow \infty.
\end{equation}
\end{cor}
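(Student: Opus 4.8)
The plan is to exploit the interior bound \eqref{eq1_cor}, which converts the existence of a large ball around a point $x$ that is contained in $\Omega$ into a pointwise decay estimate for $u(x)$. Since $\partial\Omega$ is compact, I first fix $R_0 > 0$ so that $\partial\Omega \subset B_{R_0}(0)$. The initial step is a purely topological observation: the open set $E := \mathbb{R}^n \setminus \overline{B_{R_0}(0)}$ is connected (as $n \geq 2$) and disjoint from $\partial\Omega$, hence it lies entirely in $\Omega$ or entirely in $\mathbb{R}^n \setminus \overline{\Omega}$. The latter alternative would force $\Omega \subseteq \overline{B_{R_0}(0)}$, contradicting that $\Omega$ is unbounded. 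Therefore $\mathbb{R}^n \setminus \overline{B_{R_0}(0)} \subseteq \Omega$.

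Next, for any $x \in \Omega$ with $|x| > 2 R_0$, every $y$ with $|y - x| < |x| - R_0$ satisfies $|y| \geq |x| - |x - y| > R_0$, so $B_{|x| - R_0}(x) \subseteq \mathbb{R}^n \setminus \overline{B_{R_0}(0)} \subseteq \Omega$. Since $u$ is a smooth positive admissible solution of \eqref{conformal2} on this ball, \eqref{eq1_cor} (applied with center $x_0 = x$ and radius $R = |x| - R_0$) yields
\[ u(x) \leq \Big( 4(n-1)(C_n^k)^{\frac{1}{k}} \Big)^{\frac{n-2}{4}} \big( |x| - R_0 \big)^{1 - \frac{n}{2}}. \]

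Finally, since $n \geq 3$ we have $1 - \frac{n}{2} < 0$, so the right-hand side tends to $0$ as $|x| \to \infty$, which is exactly \eqref{decaying}. I do not expect a serious obstacle: the only point demanding a moment's care is the topological step identifying the exterior of a large ball as a subset of $\Omega$ (using connectedness together with unboundedness of $\Omega$); once that is in place, the conclusion is an immediate consequence of the already-established barrier estimate \eqref{eq1_cor}.
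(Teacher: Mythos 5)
Your proof is correct and follows essentially the same route as the paper: both apply the ball estimate \eqref{eq1_cor} on a large ball centered at $x$ that lies inside $\Omega$ (the paper uses radius $|x|/2$, you use $|x|-R_0$, and you spell out the topological step showing the exterior of a large ball lies in $\Omega$, which the paper leaves implicit).
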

\begin{proof}
Applying \eqref{eq1_cor} in a ball centered at $x$ of radius $|x|/2$ for $|x|$ large, we have
\[ u(x) \leq  \Big( 4 (n - 1) \big( C_n^k \big)^{\frac{1}{k}} \Big)^{\frac{n - 2}{4}} \Big( {\frac{|x|}{2}} \Big)^{1 - \frac{n}{2}},  \]
which implies \eqref{decaying}.
\end{proof}

\vspace{2mm}

\subsection{Growth rate near codimension $1$ boundary}~

\vspace{2mm}

In this section, we assume that the boundary of $\Omega$ is composed of smooth compact hypersurfaces. For positive admissible solution of \eqref{conformal2} which approaches $\infty$ at $\partial\Omega$, we characterize the growth rate.

\begin{thm} \label{asymptotic_near_boundary}
Let $(M, g)$ be a smooth manifold with boundary. Assume that $\partial M$ is compact. If $u$ is a smooth positive admissible solution of \eqref{conformal2} on $M$ which approaches to $\infty$ at $\partial M$, then
\begin{equation} \label{limboundary}
{\rho}^{ \frac{n}{2} - 1}(x) u(x) \rightarrow  \big( (n - 1) (C_n^k)^{\frac{1}{k}} \big)^{\frac{n - 2}{4}} \quad \text{  as  } x \rightarrow \partial M,
\end{equation}
where $\rho(x)$ is the distance from $x$ to $\partial M$.
\end{thm}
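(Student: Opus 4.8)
The plan is to combine an elementary barrier estimate with a blow‑up argument that reduces the asymptotics to a Liouville property on the flat half‑space. Set $\beta=\frac{n-2}{2}$ and $a=\big((n-1)(C_n^k)^{1/k}\big)^{(n-2)/4}$, so that the limit asserted in \eqref{limboundary} is $a$; one checks from \eqref{Wij} (equivalently, by letting $s\to\infty$ in the interior balls of Proposition~\ref{subsolution}) that $a\,t^{-\beta}$ solves $\sigma_k^{1/k}(W[v])=\frac{n-2}{2}v^{(n+2)/(n-2)}$ on $\{t>0\}$ with $v=\infty$ on $\{t=0\}$. Since $\partial M$ is smooth and compact, $\rho$ is smooth with $|\nabla\rho|=1$ on a collar $\{0<\rho<\delta_0\}$, and near each $p\in\partial M$ there are boundary‑normal coordinates $(x',x_n)$ with $\partial M=\{x_n=0\}$, $\rho=x_n$ and $g_{ij}(0)=\delta_{ij}$. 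First I would prove the crude bound $c_0\,\rho^{-\beta}\le u\le C_0\,\rho^{-\beta}$ on $\{0<\rho<\delta_1\}$ for suitable $0<c_0<C_0$ and $\delta_1\le\delta_0$: a computation from \eqref{wu} using $|\nabla\rho|=1$ gives, for $\kappa>0$,
\[W[\kappa(\rho-\varepsilon)^{-\beta}]=\kappa\,\tfrac{(n-1)(n-2)}{2}\,(\rho-\varepsilon)^{-\beta-2}\,g+O\big((\rho-\varepsilon)^{-\beta-1}\big),\]
the error governed by $\nabla^2\rho$, $\Delta\rho$ and $\mathrm{Ric}_g$ on the collar, so for $\delta_1$ small and $\kappa$ large $\kappa(\rho-\varepsilon)^{-\beta}$ is an admissible supersolution on $\{\varepsilon<\rho<\delta_1\}$, and for $\delta_1$ small and $\kappa$ small $\kappa(\rho+\varepsilon)^{-\beta}$ is a subsolution on $\{0<\rho<\delta_1\}$; fixing $\delta_1$ first and then choosing $\kappa$ large (resp. small) so the barrier dominates (resp. is dominated by) $u$ on $\{\rho=\delta_1\}$, and comparing via Theorem~\ref{MP} on slightly shrunk collars before letting the auxiliary parameters go to $0$, yields the bound. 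No circularity arises: the super‑barrier is $+\infty$ on $\{\rho=\varepsilon\}$, where $u$ is finite, and the sub‑barrier stays bounded while $u\to\infty$ on $\partial M$.

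With the crude bound in hand I would blow up. Fix any sequence $x_j\to\partial M$, put $\rho_j=\rho(x_j)\to0$, let $p_j\in\partial M$ be nearest to $x_j$ (so $\rho_j e_n=x_j$ in the coordinates at $p_j$, $e_n$ the last coordinate vector), and set $v_j(y):=\rho_j^{\beta}u(\rho_j y)$. A scaling computation — valid precisely because $\beta=\frac{n-2}{2}$ — shows $v_j$ satisfies $\sigma_k^{1/k}(W_{g_j}[v_j])=\frac{n-2}{2}v_j^{(n+2)/(n-2)}$ with $g_j(y):=g(\rho_j y)\to\delta$ in $C^\infty_{\mathrm{loc}}(\{y_n>0\})$, while the crude bound gives $c_0 y_n^{-\beta}\le v_j\le C_0 y_n^{-\beta}$ on domains exhausting $\{y_n>0\}$. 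By the interior $C^1$ and $C^2$ estimates of Guan~\cite{Guan08} together with Evans--Krylov~\cite{Evans,Krylov} (uniform in $j$ since $g_j\to\delta$), $\{v_j\}$ is precompact in $C^2_{\mathrm{loc}}(\{y_n>0\})$; along a subsequence $v_j\to v_\infty$, a positive admissible solution of the flat equation $\sigma_k^{1/k}(W_\delta[v_\infty])=\frac{n-2}{2}v_\infty^{(n+2)/(n-2)}$ on $\{y_n>0\}$ with $c_0 y_n^{-\beta}\le v_\infty\le C_0 y_n^{-\beta}$, hence $v_\infty\to\infty$ as $y_n\to0$.

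The heart is then the Liouville identity $v_\infty(y)=a\,y_n^{-\beta}$. For the upper bound: given $y^\ast=(y^{\ast\prime},y^\ast_n)$, for $s$ large and $\tau>0$ small the ball $B_s\big((y^{\ast\prime},s+\tau)\big)$ lies in a compact subset of $\{y_n>0\}$ and contains $y^\ast$, so \eqref{ball} applied to $v_\infty$ there gives $v_\infty(y^\ast)\le\big(4(n-1)(C_n^k)^{1/k}s^2\big)^{(n-2)/4}\big(s^2-(s+\tau-y^\ast_n)^2\big)^{1-n/2}$; letting $s\to\infty$ and then $\tau\to0$ (the $s$‑powers cancel) gives $v_\infty(y^\ast)\le a\,(y^\ast_n)^{-\beta}$. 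For the lower bound: let $w=w_{s,\tau}$ be the exterior solution of Proposition~\ref{subsolution} for the ball $B_s\big((y^{\ast\prime},-s-\tau)\big)$ lying strictly below $\{y_n=0\}$; it is an admissible (hence sub‑)solution on $\{y_n>0\}$ that is bounded on $\overline{\{y_n\ge0\}}$ and decays like $|y|^{2-n}$, so $w/v_\infty\to0$ at each end of $\{y_n>0\}$ (as $y_n\to0$, as $y_n\to\infty$, and as $|y'|\to\infty$). If $C:=\sup(w/v_\infty)>1$ it is attained at an interior point, where $w/C$ — a subsolution by Proposition~\ref{prop}, and a \emph{strict} one since $w$ is an exact solution and $C>1$ — touches $v_\infty$ from below, forcing $\sigma_k^{1/k}(W[w/C])\le\sigma_k^{1/k}(W[v_\infty])=\frac{n-2}{2}(w/C)^{(n+2)/(n-2)}$ at that point, a contradiction; hence $w\le v_\infty$, and letting $s\to\infty$, $\tau\to0$ gives $v_\infty(y^\ast)\ge a\,(y^\ast_n)^{-\beta}$. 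Consequently $\rho_j^{\beta}u(x_j)=v_j(e_n)\to v_\infty(e_n)=a$ along the subsequence, and since every sequence $x_j\to\partial M$ admits such a subsequence, \eqref{limboundary} follows.

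I expect the Liouville step to be the main obstacle: one must ensure that $\sup(w/v_\infty)$ is attained in the interior rather than only approached at an end of the half-space, which is exactly why it matters that the exterior-ball subsolution is bounded up to $\{y_n=0\}$ and decays at infinity; the crude bound is what makes $\{v_j\}$ precompact and traps $v_\infty$ between two multiples of $y_n^{-\beta}$. The remaining ingredients — the formula for $W[\rho^{-\beta}]$, the scaling covariance of the equation, the convergence $g_j\to\delta$, and the admissibility of $v_\infty$ (which follows from $\sigma_k(W_\delta[v_\infty])>0$) — are routine; for a Euclidean domain as in Theorem~\ref{theorem2} one may skip the rescaling altogether and apply \eqref{ball} and the exterior solution of Proposition~\ref{subsolution} directly on the domain.
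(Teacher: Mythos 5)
Your proof is correct, but it follows a genuinely different route from the paper's. The paper works entirely with explicit barriers carrying the sharp constant: in a collar it builds the subsolution $c_0(\rho+\epsilon)^{1-\frac{n}{2}}e^{w(\rho)}$ with $w=(\rho+\delta)^{-1}-\delta^{-1}$ and the supersolution $c_0(\rho-\epsilon)^{1-\frac{n}{2}}e^{v(\rho)}$ with $v=\tfrac12(\ln(\delta+\rho)-\ln\delta)$; the exponential corrections (vanishing at $\rho=0$, so they do not disturb the constant) provide exactly the slack needed to absorb the $\nabla^2\rho$, $\Delta\rho$ and $\mathrm{Ric}_g$ errors, and the conclusion follows from the compact maximum principle Theorem \ref{MP} alone. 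You instead use such barriers only to get the crude two-sided bound $c_0\rho^{-\beta}\le u\le C_0\rho^{-\beta}$, and extract the sharp constant from a blow-up: the rescaled solutions converge (via the interior estimates of \cite{Guan08} and Evans--Krylov \cite{Evans,Krylov}) to a solution on the flat half-space, which you classify as $a\,y_n^{-\beta}$ by sliding the interior and exterior ball solutions of Proposition \ref{subsolution} to infinity. The trade-off: the paper's argument is shorter and needs no compactness machinery, but the barriers must be guessed; your argument is more conceptual and portable (the constant comes from the model half-space problem, and the same scheme works whenever the equation has the right scaling), at the cost of invoking interior $C^2$ estimates and of the extra care you correctly identify — uniformity of those estimates as $g_j\to\delta$, and attainment of $\sup(w/v_\infty)$ on the noncompact half-space, which your decay analysis at the three ends does settle. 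Your closing observation that for Euclidean domains one can skip the rescaling and compare with interior/exterior balls directly is precisely the Gonz\'alez--Li--Nguyen shortcut the paper mentions in the remark following this theorem.
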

\begin{proof}
We first construct a positive admissible subsolution in a neighborhood of $\partial M$. Consider
\[ \underline{u} = c_0  (\rho + \epsilon)^{1 - \frac{n}{2}} e^w, \]
where $\epsilon$ is a small positive constant,
\[ c_0 = \big( (n - 1) (C_n^k)^{\frac{1}{k}} \big)^{\frac{n - 2}{4}}, \]
and
\[ w = w(\rho) =  \frac{1}{\rho + \delta} - \frac{1}{\delta} \]
with $\delta$ a small positive constant to be chosen later.

For any point $x \in M \setminus \partial M$ at which $\rho$ is smooth, we choose a local orthonormal frame $e_1, \ldots, e_n$ around $x$ such that $e_1 = \frac{\partial}{\partial \rho}$ (recall that $\nabla \rho$ = $\frac{\partial}{\partial \rho}$ and $|\nabla \rho| = 1$ at $x$). Elementary calculation shows that at $x$,
\[ \nabla \underline{u} = c_0 e^w  (\rho + \epsilon)^{- \frac{n}{2}} \Big( 1 - \frac{n}{2} + (\rho + \epsilon) w' \Big) \nabla \rho, \]

\begin{equation*}
\begin{aligned}
\nabla^2 \underline{u} = & c_0 e^w (\rho + \epsilon)^{- \frac{n}{2} - 1} \Bigg( \bigg( \Big( 1 - \frac{n}{2} \Big)(\rho + \epsilon) + (\rho + \epsilon)^2 w' \bigg) \nabla^2 \rho  \\
& + \bigg( \frac{n}{2} \Big( \frac{n}{2} - 1 \Big) + (\rho + \epsilon)^2 w'' - (n - 2) (\rho + \epsilon) w' + (\rho + \epsilon)^2 w'^2 \bigg) \nabla \rho \otimes \nabla \rho  \Bigg),
\end{aligned}
\end{equation*}
and consequently,
\begin{equation*}
\begin{aligned}
W[ \underline{u} ] = & c_0 e^w (\rho + \epsilon)^{ - \frac{n}{2} - 1} \Bigg( \Big( (n - 2)(\rho + \epsilon)^2 w' - \frac{1}{2}(n - 2)^2 (\rho + \epsilon) \Big) \nabla^2 \rho \\
& + \Big( (n - 2)(\rho + \epsilon)^2 w'' + 2 (n - 2)(\rho + \epsilon) w' - 2 (\rho + \epsilon)^2 w'^2 \Big) \nabla \rho \otimes \nabla \rho \\
& + \bigg( \Big( (\rho + \epsilon)^2 w' - \frac{1}{2}(n - 2)(\rho + \epsilon) \Big) \Delta \rho \\
& + (\rho + \epsilon)^2 w'' - 2 (n - 2) (\rho + \epsilon) w' + 2 (\rho + \epsilon)^2 w'^2 + \frac{1}{2}(n - 1)(n - 2) \bigg) I \\
& - \frac{n - 2}{2} (\rho + \epsilon)^2 \text{Ric}_g \Bigg).
\end{aligned}
\end{equation*}

For $\rho_0 > 0$, denote
\[ M_{\rho_0} = \big\{ x \in M  \big\vert  \rho(x) < \rho_0 \big\}. \]
We may choose $\rho_0$ sufficiently small such that on $M_{\rho_0}$, $u > c_0$ and
\begin{equation}\label{eq3}
\Delta \rho I + (n - 2) \nabla^2 \rho \leq \lambda I,  \quad \text{Ric}_g \leq C_0 I,
\end{equation}
where $I$ is the unit matrix and $\lambda$, $C_0$ are positive constants depending only on $g = I$.
Denoting
\begin{equation*}
\begin{aligned}
\Psi = & (n - 2)(\rho + \epsilon)^2 w'' \nabla \rho \otimes \nabla \rho  + (n - 2)(\rho + \epsilon)^2 w' \nabla^2 \rho + (\rho + \epsilon)^2 w'' I \\
& + 2 (\rho + \epsilon)^2  w'^2 (I - \nabla \rho \otimes \nabla \rho) + (\rho + \epsilon)^2 w' \Delta \rho I - \frac{n - 2}{2} (\rho + \epsilon)^2 \text{Ric}_g,
\end{aligned}
\end{equation*}
\begin{equation*}
\begin{aligned}
\Phi = & - \frac{1}{2} (n - 2)(\rho + \epsilon) \Delta \rho I - \frac{1}{2}(n - 2)^2 (\rho + \epsilon) \nabla^2 \rho \\
& + 2 (n - 2) (\rho + \epsilon) w' (\nabla \rho \otimes \nabla \rho - I) + \frac{1}{2}(n - 1)(n - 2) I,
\end{aligned}
\end{equation*}
then
\[ W[\underline{u}] =  c_0 e^w (\rho + \epsilon)^{ - \frac{n}{2} - 1} (\Psi + \Phi).  \]
Note that
\[ w' = \frac{- 1}{(\rho + \delta)^2} < 0, \quad
w'' = \frac{2}{(\rho + \delta)^3} > 0. \]
In view of \eqref{eq3} we obtain
\begin{equation*}
\Psi \geq (\rho + \epsilon)^2 \Big( w'' +  \lambda w' - \frac{n - 2}{2} C_0 \Big) I.
\end{equation*}
We note that
\begin{equation*}
w'' +  \lambda w' - \frac{n - 2}{2} C_0
= \frac{2}{(\rho + \delta)^3} - \frac{\lambda}{(\rho + \delta)^2} - \frac{n - 2}{2} C_0 \geq 0
\end{equation*}
for $\rho_0 + \delta $ sufficiently small depending only on $n$, $\lambda$ and $C_0$. Hence we conclude that $\Psi \geq 0$.

Also, by \eqref{eq3} and for $\rho_0 + \delta \leq \sqrt{\frac{2}{\lambda}}$, we have
\begin{equation*}
\begin{aligned}
\Phi \geq & - \frac{n - 2}{2}(\rho + \epsilon) \lambda I + \frac{2(n - 2)(\rho + \epsilon)}{(\rho + \delta)^2} (I - \nabla \rho \otimes \nabla \rho) + \frac{1}{2} (n - 1) (n - 2) I
\\ \geq & \frac{1}{2} (n - 1)(n - 2) \left(
                        \begin{array}{cccc}
                          1 - \frac{\lambda (\rho + \epsilon)}{n - 1} & \, & \, & \, \\
                          \, & 1 + \frac{\lambda (\rho + \epsilon)}{n - 1} & \, & \, \\
                          \, & \, & \ddots & \, \\
                          \, & \, & \, & 1 + \frac{\lambda (\rho + \epsilon)}{n - 1} \\
                        \end{array}
                      \right).
\end{aligned}
\end{equation*}
Letting $\epsilon < \rho_0$ and $\rho_0 < \frac{n - 1}{2 \lambda}$, we have $\Phi > 0$ and
\begin{equation*}
\begin{aligned}
\sigma_k^{\frac{1}{k}} ( W[\underline{u}] ) \geq & c_0 e^w (\rho + \epsilon)^{ - \frac{n}{2} - 1} \sigma_k^{\frac{1}{k}} ( \Phi ) \\
\geq & \frac{1}{2}(n - 1)(n - 2) c_0 e^w (\rho + \epsilon)^{- \frac{n}{2} - 1}
( C_n^k )^{\frac{1}{k}} \cdot \\
& \Big( 1 + \frac{\lambda( \rho + \epsilon )}{n - 1} \Big)^{\frac{k - 1}{k}} \bigg( 1 + \Big( 1 - \frac{2 k}{n} \Big) \frac{\lambda ( \rho + \epsilon)}{n - 1} \bigg)^{\frac{1}{k}}.
 \end{aligned}
 \end{equation*}
Elementary calculation shows that
\[ \Big( 1 + \frac{ \lambda (\rho + \epsilon)}{n - 1} \Big)^{k - 1} \bigg( 1 + \Big( 1 - \frac{2 k}{n} \Big) \frac{\lambda (\rho + \epsilon)}{n - 1} \bigg) \geq  1 \]
when $k \leq \frac{n}{2}$, or when $k > \frac{n}{2}$ and $\rho_0 \leq \frac{(n - 1)(n - 2) k}{2 (2 k - n) (k - 1) \lambda}$.
Therefore,
\begin{equation*}
\sigma_k^{\frac{1}{k}} ( W[\underline{u}] ) \geq \frac{1}{2} (n - 1) (n - 2) c_0 (C_n^k)^{\frac{1}{k}} e^w (\rho + \epsilon)^{- \frac{n}{2} - 1}.
\end{equation*}
On the other hand,
\[ \frac{n - 2}{2} \underline{u}^{\frac{n + 2}{n - 2}} = \frac{n - 2}{2} {c_0}^{\frac{n + 2}{n - 2}} (\rho + \epsilon)^{- \frac{n}{2} - 1} e^{\frac{n + 2}{n - 2} w}. \]
Note that $w \leq 0$. Hence $\underline{u}$ is a subsolution of \eqref{conformal2}.

Observe that
\[ \underline{u} = c_0 \epsilon^{1 - \frac{n}{2}} \leq u = \infty \quad \text{  on  } \partial M, \]
and on $\rho = \rho_0$,
\begin{equation*} \label{on level surface}
\underline{u} \leq c_0 {\rho_0}^{1 - \frac{n}{2}} e^{(\rho_0 + \delta)^{- 1} - {\delta}^{- 1} }
\leq c_0 {\rho_0 }^{1 - \frac{n}{2}} e^{- \delta^{- 1} / 2 }  \leq  c_0 \leq  u
\end{equation*}
for $\delta$ sufficiently small depending in addition on $\rho_0$.
By the maximum principle,
\[ \underline{u}  \leq  u  \quad \text{  on  } M_{\rho_0}.  \]
Letting $\epsilon \rightarrow 0^+$ and then $x \rightarrow \partial M$ we obtain
\[ \liminf\limits_{x \rightarrow \partial M} \rho^{\frac{n}{2} - 1} u  \geq  c_0. \]

We next prove
\begin{equation} \label{limsup}
\limsup\limits_{x \rightarrow \partial M} \rho^{\frac{n}{2} - 1} u  \leq  c_0.
\end{equation}
For this, we construct a supersolution in $M_{\rho_1} \setminus \overline{M_{\epsilon}}$ for any $0 < \epsilon < \rho_1$:
\begin{equation*}
\overline{u} = c_0  (\rho - \epsilon)^{1 - \frac{n}{2}} e^v,
\end{equation*}
where
\[ v = v(\rho) =  \frac{1}{2} \Big( \ln (\delta + \rho) - \ln \delta \Big)  \]
with $\rho_1$, $\delta$  small positive constants to be chosen later.

From the above calculation, it is easy to obtain
\begin{equation*}
\begin{aligned}
\sigma_1 \big( W[ \overline{u} ] \big) = & c_0 e^v (\rho - \epsilon)^{ - \frac{n}{2} - 1} \bigg( \Big( 2 (n - 1)(\rho - \epsilon)^2 v' - (n - 2)(n - 1) (\rho - \epsilon) \Big) \Delta \rho \\
&+ 2 (n - 1)(\rho - \epsilon)^2 v'' - 2 (n - 2) (n - 1) (\rho - \epsilon) v' + 2 (n - 1) (\rho - \epsilon)^2 v'^2 \\
&+ \frac{n (n - 1) (n - 2)}{2} - \frac{n - 2}{2} (\rho - \epsilon)^2 S_g \bigg),
\end{aligned}
\end{equation*}
where $S_g$ is the scalar curvature on $M$.
We may choose $\rho_1$ sufficiently small such that on $M_{\rho_1}$,
\[ | \Delta \rho | \leq C_1,  \quad S_g \geq - C_1,
\]
where $C_1$ is a positive constant depending only on $g$.
Note that
\[ v' = \frac{1}{2 (\delta + \rho)}, \quad v'' = - \frac{1}{2 (\delta + \rho)^2}. \]
It follows that
\begin{equation*}
\begin{aligned}
& 2 (n - 1) v' \Delta \rho + 2 (n - 1) v'' + 2 (n - 1) v'^2 - \frac{n - 2}{2} S_g \\
\leq & \frac{(n - 1) C_1}{\delta + \rho} - \frac{n - 1}{2 (\delta + \rho)^2} + \frac{n - 2}{2} C_1 \leq 0,
\end{aligned}
\end{equation*}
and
\begin{equation*}
\begin{aligned}
 - (n - 2)(n - 1) \Delta \rho - 2 (n - 2) (n - 1) v'
\leq  (n - 2)(n - 1) \Big( C_1 - \frac{1}{\delta + \rho} \Big) \leq 0
\end{aligned}
\end{equation*}
for $\rho_1 + \delta$ sufficiently small depending only on $n$ and $C_1$.
Consequently,
\begin{equation*}
\sigma_1 \big( W[ \overline{u} ] \big) \leq c_0 e^v (\rho - \epsilon)^{ - \frac{n}{2} - 1}
 \frac{n (n - 1) (n - 2)}{2}.
\end{equation*}
Then applying Newton-Maclaurin inequality we arrive at
\[ \sigma_k^{\frac{1}{k}} \big( W[ \overline{u} ] \big) \leq \frac{(C_n^k)^{\frac{1}{k}}}{n} \sigma_1 \big( W[ \overline{u} ] \big) \leq \frac{n - 2}{2} {\overline{u}}^{\frac{n + 2}{n - 2}} \quad \text{  in  } M_{\rho_1} \setminus \overline{M_{\epsilon}}. \]

Also, we note that on $\rho = \epsilon$,
\[   \overline{u} = \infty > u,    \]
and on $\rho = \rho_1$,
\[ \overline{u} \geq c_0  {\rho_1}^{1 - \frac{n}{2}} \sqrt{\frac{\delta + \rho_1}{\delta}} \geq u
\]
if $\delta$ is chosen sufficiently small depending in addition on $\rho_1$ and $u$.
By the maximum principle, we have
\[ \overline{u}  \geq  u  \quad \text{  on  } M_{\rho_1} \setminus M_{\epsilon}.  \]
Letting $\epsilon \rightarrow 0^+$ and then $x \rightarrow \partial M$ we obtain
\eqref{limsup}.
\end{proof}

\begin{rem}
In \cite{Gursky-Streets-Warren}, a global subsolution
\[ \underline{u} = c_0 (\rho + \epsilon)^{1 - \frac{n}{2}} e^{A \big( (\rho + \delta)^{- p} - {\delta}^{- p} \big)} \]
is constructed on compact manifold with boundary which yields the lower bound of the growth rate. We simplify the proof by defining the subsolution only in a neighborhood of the boundary $\partial M$. For the upper bound, we also adopt a new supersolution near $\partial M$. For the special case when $M$ is a domain  in $\mathbb{R}^n$, we may use Gonz\'alez-Li-Nguyen's method (see Lemma 3.4 in \cite{Gonzalez-Li-Nguyen}) to give an easier proof.
\end{rem}

\vspace{2mm}

\subsection{Existence of smooth solutions}~

\vspace{2mm}

We now cite an important result of Loewner and Nirenberg \cite{Loewner} when the boundary of the domain is composed of hypersurfaces for the scalar case ($k = 1$), which will be used as an upper barrier later.
\begin{lemma}  \label{Loewner}
Let $\Omega$ be a domain in $\mathbb{R}^n$ having smooth compact hypersurfaces as boundary. There exists a unique positive solution $u \in C^{\infty}(\Omega)$ of \eqref{scalar_eqn}. In addition, if $\rho(x)$ denotes the distance of $x$ to the boundary of $\Omega$, then
\[{\rho}^{ \frac{n}{2} - 1}(x) u(x) \rightarrow  \big( n (n - 1) \big)^{\frac{n - 2}{4}}\quad \text{ as }  x \rightarrow \partial\Omega. \]
If $\Omega$ is unbounded, the solution $u(x)$ has the property that
\[ |x|^{n - 2} u(x) \rightarrow c \quad \text{  as  } |x| \rightarrow \infty, \]
where $c$ is a positive constant depending only on $n$ and $\Omega$.
\end{lemma}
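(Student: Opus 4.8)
Since $g$ is Euclidean we have $S_g\equiv 0$, so \eqref{scalar_eqn} reads $\frac{4(n-1)}{n-2}\Delta u=u^{\frac{n+2}{n-2}}$ --- exactly the $k=1$ case of \eqref{conformal2} --- and every tool of this section is available with $(C_n^k)^{1/k}$ replaced by $n$: Proposition \ref{prop}, the comparison principle Theorem \ref{MP}, the explicit ball solutions of Proposition \ref{subsolution} together with the bound \eqref{eq1_cor}, and the boundary asymptotics of Theorem \ref{asymptotic_near_boundary}. The plan is first to produce a solution by the standard barrier-and-exhaustion scheme: fix a bounded smooth exhaustion $\Omega_1\Subset\Omega_2\Subset\cdots$ of $\Omega$ and solve on each $\Omega_j$ a Dirichlet problem for \eqref{scalar_eqn} whose boundary data are chosen, using the positive subsolution $\underline u$ supplied by the proof of Theorem \ref{asymptotic_near_boundary}, so that Theorem \ref{MP} renders the solutions $u_j$ monotone in $j$; the estimate \eqref{eq1_cor}, applied in the largest ball about each interior point, yields the locally uniform upper bound $u_j(x)\le\big(4n(n-1)\big)^{\frac{n-2}{4}}\rho(x)^{1-\frac n2}$, and interior Schauder estimates then give a smooth positive limit $u=\lim_j u_j$ solving \eqref{scalar_eqn}. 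Comparison with $\underline u$ (letting the auxiliary parameter $\epsilon$ tend to $0$) forces $u\to\infty$ at $\partial\Omega$, and Theorem \ref{asymptotic_near_boundary} with $k=1$ then delivers the stated rate ${\rho}^{\frac n2-1}(x)u(x)\to\big(n(n-1)\big)^{\frac{n-2}{4}}$.

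Next, the behaviour at infinity when $\Omega$ is unbounded. As $\partial\Omega$ is compact there is $R_0$ with $\mathbb{R}^n\setminus B_{R_0}(0)\subset\Omega$, and there $u>0$ solves $\frac{4(n-1)}{n-2}\Delta u=u^{\frac{n+2}{n-2}}$, so $\Delta u\ge 0$. Since $u\to 0$ at infinity (Corollary \ref{decay} with $k=1$), comparing the subharmonic function $u$ with the exterior harmonic function that dominates it on $\partial B_{R_0}(0)$ and vanishes at infinity gives $u(x)\le C|x|^{2-n}$. The Kelvin transform $v(y)=|y|^{2-n}u\big(y/|y|^2\big)$ solves the same conformally invariant equation in $B_{1/R_0}(0)\setminus\{0\}$ and, by the last bound, is bounded near $0$; hence the singularity is removable and $v$ extends to a nonnegative $C^\infty$ solution on $B_{1/R_0}(0)$. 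Writing the equation as $\Delta v-c(x)v=0$ with $c(x)=\frac{n-2}{4(n-1)}v(x)^{\frac{4}{n-2}}\ge 0$ bounded, the strong maximum principle forces $v>0$ everywhere, so $c:=v(0)>0$ and $|x|^{n-2}u(x)\to c$ as $|x|\to\infty$; by the uniqueness statement $c$ depends only on $n$ and $\Omega$.

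Finally, uniqueness. If $u$ and $u'$ are two positive solutions, the ratio $u'/u$ extends continuously to $\overline\Omega$ --- and, when $\Omega$ is unbounded, to $\infty$ --- with value $1$ on $\partial\Omega$ by the growth rate and a finite positive value at $\infty$ by the decay rate, so $\sup_\Omega(u'/u)$ is attained; if it equals some $C>1$ at an interior point $x_0$, then $Cu$ is a strict supersolution by Proposition \ref{prop} touching $u'$ from above at $x_0$, and evaluating $\sigma_1\big(W[\,\cdot\,]\big)=2(n-1)\Delta(\,\cdot\,)$ at $x_0$ contradicts \eqref{scalar_eqn} for $u'$ just as in the proof of Theorem \ref{MP}; by symmetry $u\equiv u'$. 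The main obstacle I anticipate is the analysis at infinity --- the sharp decay $|x|^{2-n}$ and, above all, the strict positivity of the constant $c$ --- for which the conformal invariance of the operator and the strong maximum principle applied after the Kelvin transform are the decisive ingredients.
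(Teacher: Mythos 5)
First, a remark on the comparison you were asked for: the paper offers no proof of this lemma at all --- it is quoted from Loewner--Nirenberg \cite{Loewner} and used purely as an upper barrier (indeed, the unbounded-domain uniqueness for $k=1$ claimed in Theorem \ref{Second Half} silently leans on it). So your proposal is being measured against the classical argument rather than anything in the text. Taken on its own terms, most of it is sound. The exhaustion-plus-barrier existence scheme is the standard one and works for this semilinear equation (the details of choosing boundary data so that the $u_j$ are monotone, and of fitting a blowing-up lower barrier under the limit near $\partial\Omega$, are routine); the boundary rate follows from Theorem \ref{asymptotic_near_boundary} with $k=1$, where $(C_n^1)^{1}=n$ does give $\big(n(n-1)\big)^{\frac{n-2}{4}}$; and the analysis at infinity --- subharmonicity and comparison giving $u\le C|x|^{2-n}$, conformal invariance under the Kelvin transform, removability of the point singularity for a bounded solution of $\Delta v=O(1)$, and the strong maximum principle for $\Delta-\tilde c$ with $\tilde c=\frac{n-2}{4(n-1)}v^{\frac{4}{n-2}}\ge 0$ forcing $v(0)>0$ --- is exactly the right mechanism and is carried out correctly.

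The one genuine gap is in the uniqueness step for unbounded $\Omega$. You adjoin $\partial\Omega$ and the point at infinity, observe that $u'/u$ extends continuously with value $1$ on $\partial\Omega$ and value $c'/c$ at infinity, conclude that the supremum is attained on this compactification --- and then treat only the case of an interior maximum $x_0\in\Omega$. The case $\sup_\Omega(u'/u)=c'/c>1$ attained \emph{only} at infinity is not excluded: you cannot assume $c=c'$ in advance (that equality is a consequence of the uniqueness you are trying to prove), and at ``infinity'' there is no point at which to run the Hessian comparison of Theorem \ref{MP}. The repair is already in your toolkit: invert about a point $p\in\mathbb{R}^n\setminus\overline{\Omega}$, so that by your removable-singularity step both solutions become smooth positive solutions $v,v'$ of the same equation on a bounded domain $\tilde\Omega$ containing the image $0$ of infinity as an interior point, with $v'/v\to 1$ at $\partial\tilde\Omega$; any supremum $>1$ of $v'/v$ is then attained at a genuine interior point of $\tilde\Omega$ (possibly $0$), where your touching argument via Proposition \ref{prop} applies, and this also shows a posteriori that $c=c'$. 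With that case closed the proof is complete.
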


We can now state the main theorem in this section.

\begin{thm} \label{Second Half}
Let $\Omega$ be a domain in $\mathbb{R}^n$ with nonempty smooth compact boundary which are composed of closed hypersurfaces.
Then there exists a positive admissible solution $u \in C^{\infty} (\Omega)$ to equation \eqref{conformal2} which tends to $\infty$ on $\partial \Omega$. If $\Omega$ is bounded or $k = 1$, the solution is unique.
\end{thm}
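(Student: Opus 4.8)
The plan is to obtain $u$ as a limit, locally in $C^{\infty}(\Omega)$, of solutions of Dirichlet problems with finite boundary data on an exhaustion of $\Omega$, trapped between a fixed positive subsolution and the Loewner--Nirenberg solution, and then to force $u=\infty$ on $\partial\Omega$ using the near-boundary subsolution from the proof of Theorem \ref{asymptotic_near_boundary}.

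\emph{Barriers.} Let $\bar u\in C^{\infty}(\Omega)$ be the solution of \eqref{scalar_eqn} (with $S_g\equiv0$) given by Lemma \ref{Loewner}, so that $\sigma_1(W[\bar u])=\frac{n-2}{2}\bar u^{(n+2)/(n-2)}$, which is \eqref{conformal2} with $k=1$. At any point, either $\lambda(W[\bar u])\notin\Gamma_k$, or $\lambda(W[\bar u])\in\Gamma_k$ and then the Newton--Maclaurin inequality gives $\sigma_k^{1/k}(W[\bar u])\le\frac{(C_n^k)^{1/k}}{n}\sigma_1(W[\bar u])\le\frac{n-2}{2}\bar u^{(n+2)/(n-2)}$ since $C_n^k\le n^k$; hence $\bar u$ is a supersolution of \eqref{conformal2} for every $k$, with $\bar u=\infty$ on $\partial\Omega$ and $\bar u(x)\to0$ as $|x|\to\infty$ when $\Omega$ is unbounded. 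For a lower barrier I would take from Proposition \ref{subsolution} a ball solution on a ball $B_s(x_0)\supset\overline{\Omega}$ when $\Omega$ is bounded, or an exterior-ball solution centered at a point $y_0$ with $\overline{B_{s_0}(y_0)}\subset\mathbb{R}^n\setminus\overline{\Omega}$ when $\Omega$ is unbounded (note $\mathbb{R}^n\setminus\overline{\Omega}$ is then bounded with nonempty interior): this is a smooth positive admissible solution $\underline u$ of \eqref{conformal2}, finite and bounded on $\overline{\Omega}$, and one checks $\underline u\le\bar u$ on $\Omega$ by Theorem \ref{MP} (in the unbounded case, first on $\Omega\cap B_R(0)$ and then letting $R\to\infty$, comparing the $|x|^{2-n}$ decay rates).

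\emph{Exhaustion and limit.} Set $\Omega_j=\Omega$ if $\Omega$ is bounded and $\Omega_j=\Omega\cap B_j(0)$ otherwise, and solve \eqref{conformal2} on $\Omega_j$ with boundary value $j$ on $\partial\Omega$ and $\bar u$ on the remaining boundary. For $j$ large this is solvable by the global $C^0,C^1,C^2$ a priori estimates (Guan \cite{Guan08}), Evans--Krylov and Schauder theory \cite{Evans,Krylov}, and the continuity or degree method, with $\bar u$ as an upper barrier, a scaled subsolution $c\,\underline u$ ($0<c<1$, admissible by Proposition \ref{prop} and below the boundary data), and the standard near-boundary barriers. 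Then $c\,\underline u\le u_j\le\bar u$ on $\Omega_j$ by Theorem \ref{MP}, so the $u_j$ are locally uniformly bounded on $\Omega$ above and below by positive constants. The interior second order estimates of Guan, Evans--Krylov, and Schauder bootstrapping yield locally uniform $C^{\infty}$ bounds, and a diagonal/Arzel\`a--Ascoli argument produces a subsequence converging locally in $C^{\infty}(\Omega)$ to a positive admissible solution $u$ of \eqref{conformal2} with $c\,\underline u\le u\le\bar u$; in particular $u(x)\to0$ as $|x|\to\infty$ when $\Omega$ is unbounded (also by Corollary \ref{decay}). Admissibility of the limit is retained since the right-hand side of \eqref{conformal2} is positive.

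\emph{Blow-up and uniqueness.} For each $\epsilon>0$ let $\underline u_\epsilon=c_0(\rho+\epsilon)^{1-n/2}e^{w}$ be the near-boundary subsolution constructed in the proof of Theorem \ref{asymptotic_near_boundary} on a collar $M_{\rho_0}=\{\rho<\rho_0\}$, with $w=w(\rho)$ carrying a parameter $\delta$. On $\partial\Omega$ one has $\underline u_\epsilon=c_0\epsilon^{1-n/2}\le j$ for $j$ large, and on $\{\rho=\rho_0\}$, after shrinking $\delta$, $\underline u_\epsilon\le c_0e^{-\delta^{-1}/2}<\inf_{\{\rho=\rho_0\}}(c\,\underline u)\le u_j$; hence Theorem \ref{MP} gives $\underline u_\epsilon\le u_j$ on $M_{\rho_0}$, and therefore $\underline u_\epsilon\le u$ there. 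Letting $\epsilon\to0$ yields $\liminf_{x\to\partial\Omega}\rho(x)^{\frac{n}{2}-1}u(x)\ge c_0>0$, so $u=\infty$ on $\partial\Omega$. Finally, if $\Omega$ is bounded and $u_1,u_2$ are two such solutions, Theorem \ref{asymptotic_near_boundary} gives $u_1/u_2\to1$ at $\partial\Omega$, so for $\tau>1$ the supersolution $\tau u_2$ (Proposition \ref{prop}) satisfies $u_1\le\tau u_2$ near $\partial\Omega$, hence on all of $\Omega$ by Theorem \ref{MP} applied on $\{\rho\ge\rho_0\}$; letting $\tau\to1$ and exchanging $u_1,u_2$ gives $u_1=u_2$, and for $k=1$ uniqueness is already part of Lemma \ref{Loewner}. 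The principal difficulty is analytic — the global and interior $C^2$ a priori estimates underlying the Dirichlet solvability and the passage to the limit, here supplied by Guan's estimates and Evans--Krylov — together with carrying the near-boundary subsolution comparison uniformly along the exhaustion so that the blow-up survives in the limit.
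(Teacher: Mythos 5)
Your proposal follows essentially the same route as the paper: the exterior-ball solution from Proposition \ref{subsolution} as lower barrier, the Loewner--Nirenberg solution $\bar u$ turned into a supersolution of \eqref{conformal2} via Newton--Maclaurin, Dirichlet problems on an exhaustion solved by Guan's results, interior estimates plus Evans--Krylov and a diagonal argument, the collar subsolution of Theorem \ref{asymptotic_near_boundary} to force blow-up at $\partial\Omega$, and the growth rate for uniqueness (the paper handles the bounded case by citing Guan's Corollary 1.3 directly, but your exhaustion-in-the-boundary-data variant is equivalent). The one step that is not justified as written is the claim $\underline u\le\bar u$ on $\Omega$: comparing the $|x|^{2-n}$ decay rates only gives $\underline u\le \frac{2C_1}{c}\bar u$ for $|x|$ large, where $C_1$ and $c$ are the respective asymptotic constants and there is no reason to have $C_1\le c$; the fix is the paper's, namely to use $\max\{\frac{2C_1}{c},1\}\,\bar u$ (still a supersolution by Proposition \ref{prop} together with Newton--Maclaurin) as the upper barrier, which changes nothing else in your argument.
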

\begin{proof}
If $\Omega$ is a bounded domain, by Corollary 1.3 in \cite{Guan08}, there exists a positive admissible solution $u \in C^{\infty} (\Omega)$ to equation \eqref{conformal2} with infinite boundary value.

When $\Omega$ is unbounded, we may assume without loss of generality that $0 \notin \overline{\Omega}$. Let $B_s(0)$ be a fixed ball such that $\overline{\Omega} \subset \mathbb{R}^n\setminus \overline{B_s(0)}$. By Proposition \ref{subsolution}, there exists a smooth solution
\[ \underline{u} = C_1 (|x|^2 - s^2)^{1 - \frac{n}{2}}, \quad C_1 = \big( 4 (n - 1) (C_n^k)^{\frac{1}{k}}  s^2 \big)^{\frac{n - 2}{4}}  \]
to equation \eqref{conformal2} in $\mathbb{R}^n \setminus \overline{B_s(0)}$.

For any $R > \max\limits_{\partial\Omega} \underline{u}$ large enough such that $\partial\Omega \subset B_R(0)$, by Corollary 5.4 in \cite{Guan08}, there exists a smooth positive admissible solution $u_R$ to the Dirichlet problem
\begin{equation}
\left\{
\begin{aligned}
& \sigma_k^{\frac{1}{k}}(W[u]) =  \frac{n - 2}{2} {u}^{\frac{n + 2}{n - 2}}  \quad \text{  in  }   B_{R}(0) \cap \Omega, \\
& u =  R \quad \text{  on  }  \partial \Omega, \\
& u =  \underline{u} \quad \text{  on  }  \partial B_{R}(0).
\end{aligned}
\right.
\end{equation}

By Lemma \ref{Loewner}, we are able to find a positive solution $\bar u \in C^{\infty}(\Omega)$ of
\eqref{scalar_eqn} which tends to $\infty$ at $\partial \Omega$ with the growth rate
\[ \rho^{\frac{n}{2} - 1} (x) \overline{u}(x) \rightarrow  \big( n (n - 1) \big)^{\frac{n - 2}{4}} \quad \text{  as  }  x \rightarrow \partial \Omega, \]
and decays to $0$ at $\infty$ with the decay rate
\[  |x|^{n - 2} \overline{u} (x) \rightarrow  c \quad \text{  as  } |x| \rightarrow \infty, \]
where $c$ is the positive constant in Lemma \ref{Loewner}.

Comparing the decay rate as $|x| \rightarrow \infty$, we have on $\partial B_R(0)$ with $R$ sufficiently large,
\[ u_R (x) = \underline{u}(x) \leq \frac{2 C_1}{c} \overline{u}(x). \]
Meanwhile, we have
\[ u_R (x) = R < \infty = \frac{2 C_1}{c} \overline{u}(x) \quad \text{  on  } \partial \Omega. \]
By the maximum principle,  we obtain
\[ u_R(x) \leq \max\Big\{\frac{2 C_1}{c}, 1 \Big\} \overline{u} (x) \quad \text{  on  } B_R(0) \cap \Omega. \]
In proving the above inequality, we have applied the Newton-Maclaurin inequality as well as Proposition \ref{prop}.
On the other hand, by the maximum principle we have
\[  u_R(x) \geq \underline{u}(x) \quad \text{  on  }  B_R(0) \cap \Omega.   \]
Now we can apply the interior estimates of Guan (see Theorems 2.1 and 3.1 in \cite{Guan08}), followed by the Evans-Krylov interior estimates \cite{Evans, Krylov} and a standard diagonal process to obtain a smooth positive admissible solution $u$ of \eqref{conformal2} on $\Omega$ which tends to $\infty$ at $\partial \Omega$.

When $\Omega$ is bounded, we shall prove the uniqueness. Suppose $v$ is another positive admissible solution satisfying \eqref{conformal2}-\eqref{eq1-2}. By Theorem \ref{asymptotic_near_boundary}, $u$ and $v$ have the same growth rate as $x\rightarrow \partial \Omega$. Hence for any $\epsilon > 0$, on $\{ \rho = c \}$ where $c > 0$ is a sufficiently small positive constant, we have
\[ u \leq  (1 + \epsilon) v. \]
By Proposition \ref{prop} and the maximum principle,
\[ u  \leq ( 1 + \epsilon) v \quad \text{ in }\,\, \{ x \in \Omega |  \rho (x) > c \}. \]
Letting $c \rightarrow 0$, we have
\[ u  \leq  (1 + \epsilon)  v \quad \text{ in } \,\, \Omega. \]
Then let $\epsilon \rightarrow 0$. It follows that
\[  u  \leq  v \quad \text{ in } \, \, \Omega. \]
Similarly, we can show that $v \leq u$ in $\Omega$. Hence we proved the uniqueness.
\end{proof}

\vspace{4mm}

\section{Maximal solution on general domain in $\mathbb{R}^n$}

\vspace{4mm}

For any domain $\Omega\subset\mathbb{R}^n$ with smooth compact boundary $\partial \Omega$, we hope to associate a smooth positive admissible solution $u_{\Omega}$ of \eqref{conformal2} which is maximal, in the sense that it is greater than or equal to any smooth positive admissible solution of \eqref{conformal2} in $\Omega$. We will then investigate when $u_\Omega$ tends to $\infty$ on $\partial \Omega$.

\vspace{2mm}

\subsection{Construction of the maximal solution}~

\vspace{2mm}

Let $\Omega_{(1)} \Subset \Omega_{(2)} \Subset \ldots$ be an increasing sequence of bounded subdomains of $\Omega$ with smooth compact boundaries $\partial\Omega_{(j)}$ which are closed hypersurfaces such that $\Omega = \cup \Omega_{(j)}$.
By Theorem \ref{Second Half}, we can find a unique smooth positive admissible solution $u_{(j)}$ of \eqref{conformal2} in $\Omega_{(j)}$ which tends to $\infty$ on the boundary $\partial \Omega_{(j)}$.
By the maximum principle, we see that $\{u_{(j)}\}$ is a monotone decreasing sequence of positive functions. It follows that $u_{(j)}$ converges to a nonnegative function $u_{\Omega}$ in $\Omega$.

We have the following dichotomy, which is similar to \cite{Gonzalez-Li-Nguyen}.

\begin{lemma} \label{dichotomy}
We have either $u_{\Omega} > 0$ in $\Omega$ or $u_{\Omega} \equiv 0$ in $\Omega$.
\end{lemma}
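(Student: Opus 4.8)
The plan is to deduce the dichotomy from an interior gradient estimate of \emph{logarithmic} type together with the monotone convergence $u_{(j)}\searrow u_\Omega$. Concretely, it suffices to prove: if $u_\Omega(x_0)>0$ for some $x_0\in\Omega$, then $u_\Omega>0$ everywhere in $\Omega$; since $u_\Omega\ge 0$, the failure of ``$u_\Omega>0$ in $\Omega$'' then forces $u_\Omega\equiv 0$, which is the assertion. The mechanism is that the natural gradient estimate for \eqref{conformal2} bounds $|\nabla\log u|$ locally in terms of $\sup u$ alone, so a positive lower bound at one point propagates, with at most exponential loss, to every point; no lower bound on $u$ is needed, which is what makes the estimate uniform along the exhausting sequence.

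First I would fix $x_0$ with $u_\Omega(x_0)>0$, pick an arbitrary $q\in\Omega$, and join $x_0$ to $q$ by a smooth path $\gamma$ contained in $\Omega$; let $L$ be its length. Choose a bounded open set $K$ with $\gamma\subset K$ and $\overline K\subset\Omega$. Since $\Omega=\bigcup_j\Omega_{(j)}$ is an increasing exhaustion and $\overline K$ is compact, there is $j_0$ with $\overline K\subset\Omega_{(j_0)}$; then for every $j\ge j_0$ the function $u_{(j)}$ is a smooth positive admissible solution of \eqref{conformal2} on a neighbourhood of $\overline K$. By monotonicity $u_{(j)}\le u_{(j_0)}$ on $\Omega_{(j_0)}\supset\overline K$, so $M:=\sup_{\overline K}u_{(j_0)}<\infty$ bounds $\sup_{\overline K}u_{(j)}$ for every $j\ge j_0$ (alternatively one may invoke \eqref{eq1_cor} directly on balls). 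Applying the interior gradient estimate of Guan \cite{Guan08} (Theorem 2.1) to $u_{(j)}$ on $K$ then yields a constant $C$ depending only on $n$, $k$, $M$ and the distance from $\gamma$ to $\partial K$ --- in particular independent of $j$ --- with $|\nabla\log u_{(j)}|\le C$ along $\gamma$ for all $j\ge j_0$.

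Integrating this bound along $\gamma$ and using $u_{(j)}(x_0)\ge u_\Omega(x_0)$ gives
\[
u_{(j)}(q)\ \ge\ u_{(j)}(x_0)\,e^{-CL}\ \ge\ u_\Omega(x_0)\,e^{-CL}\ >\ 0\qquad (j\ge j_0).
\]
Letting $j\to\infty$ yields $u_\Omega(q)\ge u_\Omega(x_0)\,e^{-CL}>0$; as $q\in\Omega$ was arbitrary and $\Omega$ is connected, $u_\Omega>0$ throughout $\Omega$.

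The one delicate point --- and essentially the only place analysis enters --- is that the gradient estimate invoked must control $|\nabla u_{(j)}|/u_{(j)}$ (equivalently $|\nabla\log u_{(j)}|$) in terms of the local supremum of $u_{(j)}$ \emph{with no lower bound on $u_{(j)}$}; this is exactly what makes the constant $C$ uniform in $j$, and it is precisely the interior estimate of Guan \cite{Guan08} that already powers the diagonal argument in the proof of Theorem \ref{Second Half}. I would emphasize that this argument uses nothing about $\partial\Omega$ beyond the existence of the exhausting solutions $u_{(j)}$, and in particular never needs to identify $u_\Omega$ as a solution or to analyse the set where $u_\Omega$ vanishes.
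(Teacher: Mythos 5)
Your reduction of the lemma to ``$u_\Omega(x_0)>0$ at one point implies $u_\Omega>0$ everywhere'' is the right one, but the step carrying all the weight --- a bound $|\nabla\log u_{(j)}|\le C$ along $\gamma$ with $C$ depending on $\sup_{\overline K}u_{(j)}$ but \emph{not} on $\inf_{\overline K}u_{(j)}$ --- is precisely what is not available, and you assert it rather than prove it. The interior gradient estimates in play are formulated for $v=\tfrac{2}{n-2}\log u$, and their constants depend on $\Vert v\Vert_{C^0}$, equivalently on the oscillation of $\log u$: this is explicit in Theorem \ref{interior gradient estimate} of this paper (whose proof takes $\eta(v)=(\tfrac32+v-\inf v)^{-N}$ with $N$ large depending on $\operatorname{osc}\,v$), and Guan's Theorem 2.1 in \cite{Guan08} has the same character. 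A dependence on $\operatorname{osc}(\log u)$ is a dependence on a positive lower bound for $u$ given the upper bound, which is exactly the quantity the dichotomy is supposed to produce; so your constant $C$ is not uniform in $j$ and the argument is circular. This is also why, in the proof of Theorem \ref{Second Half}, the author establishes the lower barrier $u_R\ge\underline u$ \emph{before} invoking the interior estimates. A $\sup$-only logarithmic gradient estimate of Cheng--Yau type may well hold for \eqref{conformal2} (it does for $k=1$), but it would have to be proved; it cannot simply be cited. Note also that admissibility only gives $\Delta u_{(j)}>0$ (subharmonicity), for which no Harnack inequality is available, so there is no soft replacement for the missing estimate.

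The paper's own proof avoids gradient estimates entirely and is much more elementary: if $u_\Omega>c$ on some ball $B(x_0,r_0)\subset\Omega$, one takes the explicit exterior-ball solution $v$ of Proposition \ref{subsolution} on $\mathbb{R}^n\setminus\overline{B_{r_1}(x_0)}$, with $r_1<r_0$ tuned so that $v=c$ on $\partial B(x_0,r_0)$. Since $v\le u_{(j)}$ on $\partial\bigl(\Omega_{(j)}\setminus B(x_0,r_0)\bigr)$ (it equals $c$ on the inner sphere where $u_{(j)}\ge u_\Omega\ge c$, and $u_{(j)}=\infty$ on $\partial\Omega_{(j)}$), the comparison principle (Theorem \ref{MP}) gives $u_{(j)}\ge v>0$ on $\Omega_{(j)}\setminus B(x_0,r_0)$ uniformly in $j$, hence $u_\Omega\ge v>0$ there. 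I would either rewrite your proof along these lines or supply a genuine proof of the lower-bound-free gradient estimate you are relying on.
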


\begin{proof}
For the sake of completeness, we provide the proof. Suppose that $u_{\Omega} \not\equiv 0$. Then we must have $u_{\Omega} > c$ on some $B(x_0, r_0) \subset \Omega$ for some constant $c > 0$. We may choose $0 < r_1 < r_0$ such that the solution
\[ v = \big( 4 (n - 1) (C_n^k)^{\frac{1}{k}}  r_1^2 \big)^{\frac{n - 2}{4}} \big( {|x - x_0|}^2 - r_1^2 \big)^{1 - \frac{n}{2}} \quad \text{  in  } \mathbb{R}^n\setminus \overline{B_{r_1} (x_0)} \]
(see Proposition \ref{subsolution}) satisfies
\[  \big( 4 (n - 1) (C_n^k)^{\frac{1}{k}}  r_1^2 \big)^{\frac{n - 2}{4}} \big( r_0^2 - r_1^2 \big)^{1 - \frac{n}{2}} = c.  \]
Let $\Omega_{(j)}$ and $u_{(j)}$ be as above. By the maximum principle, we know that $u_{(j)} \geq v$ in $\Omega_{(j)}\setminus B(x_0, r_0)$ for any $j$. Hence $u_{\Omega} \geq v > 0$ in $\Omega\setminus B(x_0, r_0)$. We thus have $u_{\Omega} > 0$ in $\Omega$.
\end{proof}

\begin{rem}
Several remarks are as follows.
\begin{enumerate}
\item In view of Lemma \ref{dichotomy} and the interior regularity of Guan \cite{Guan08} and Evans-Krylov \cite{Evans, Krylov}, we know that $u_{\Omega}$ is smooth.

\item If there exists a positive admissible subsolution $v$ of \eqref{conformal2} in $\Omega$ (for example, when $\mathbb{R}^n \setminus \overline{\Omega} \neq \emptyset$), then $u_{\Omega} > 0$ in $\Omega$.

\item When $u_{\Omega} > 0$ in $\Omega$, $u_{\Omega}$ is the so-called maximal solution of equation \eqref{conformal2} in $\Omega$. In fact, if $w$ is any smooth positive admissible solution of \eqref{conformal2} in $\Omega$, then $u_{(j)} \geq w$ on $\Omega_{(j)}$ by the maximum principle. Hence $u_{\Omega} \geq  w$ in $\Omega$.

\item When $\Omega$ is bounded and $\partial\Omega$ is composed of closed hypersurfaces, $u_{\Omega}$ coincides with the solution given by Theorem \ref{Second Half}.
\end{enumerate}
\end{rem}

As in \cite{Loewner}, we call a compact subset $\Gamma \subset \partial\Omega$ regular, if
\begin{equation} \label{regular}
u_{\Omega} (x) \rightarrow \infty  \quad \text{  as  }  x \rightarrow \Gamma.
\end{equation}
The rest of this section discusses the validity of \eqref{regular}.

\vspace{2mm}

\subsection{Regularity of a portion on the boundary}~

\vspace{2mm}

We consider a portion $\Gamma\subset\partial\Omega$ which is a smooth compact non-self-intersecting surface of codimension $m$. We also assume that $\partial\Omega\setminus\Gamma$ is smooth compact.
As in \cite{Loewner, Gonzalez-Li-Nguyen}, we first give a necessary and sufficient condition for \eqref{regular}.

\begin{thm} \label{nscondition for regularity}
Let $\Omega$ be a domain in $\mathbb{R}^n$ and $\Gamma$ be a compact subset of $\partial\Omega$ such that $\partial\Omega\setminus\Gamma$ is also compact. Suppose that $u_{\Omega} \not\equiv 0$. Then
$u_{\Omega}(x) \rightarrow \infty$ as $x \rightarrow \Gamma$ if and only if there exists an open neighborhood $U$ of $\Gamma$ and a $C^2$ positive admissible subsolution $\phi(x)$ of \eqref{conformal2} defined in $\Omega \cap U$ which tends to $\infty$ as $x \rightarrow \Gamma$.
\end{thm}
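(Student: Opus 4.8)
The plan is to prove the two implications of the equivalence separately, with the forward direction being essentially a consequence of the definition of $u_{\Omega}$ and the reverse direction requiring a barrier argument.

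\textbf{The necessity direction.} Suppose $u_{\Omega}(x) \to \infty$ as $x \to \Gamma$. Since $\partial\Omega\setminus\Gamma$ is compact and disjoint from $\Gamma$, I can choose a bounded open neighborhood $U$ of $\Gamma$ whose closure avoids $\partial\Omega\setminus\Gamma$. The claim is that $\phi := u_{\Omega}$ restricted to $\Omega\cap U$ works as the desired subsolution: it is a genuine (smooth, by the Remark following Lemma \ref{dichotomy}) positive admissible solution, hence a subsolution, and by hypothesis it blows up along $\Gamma$. One should check that $\Omega\cap U$ has the regularity needed to even speak of $\phi$ being a subsolution there, but since $\phi$ solves \eqref{conformal2} on all of $\Omega$ this is immediate. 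So this direction is essentially trivial once the neighborhood is chosen correctly.

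\textbf{The sufficiency direction.} This is the substantive part. Suppose such a subsolution $\phi$ exists on $\Omega\cap U$, blowing up along $\Gamma$. I would fix a slightly smaller neighborhood $V$ with $\Gamma\subset V\Subset U$, and on the ``shell'' $\Omega\cap(U\setminus \overline V)$ the function $\phi$ is bounded above by some constant, while $\partial V\cap\Omega$ is a compact hypersurface-like set away from $\Gamma$. The idea is to compare $\phi$ with the approximating solutions $u_{(j)}$ on $\Omega_{(j)}$ that define $u_{\Omega}$. Since $u_{\Omega}\not\equiv 0$, Lemma \ref{dichotomy} gives $u_{\Omega}>0$, and on the compact set $\partial V\cap\overline\Omega$ (which lies in the interior of $\Omega$, away from $\partial\Omega$) we have $u_{(j)}\to u_{\Omega}$ which is bounded below by a positive constant, while $\phi$ is bounded above there; thus a suitable constant multiple $c\phi$ with $0<c<1$ satisfies $c\phi\le u_{(j)}$ on $\partial V\cap\Omega$ for all large $j$ (using that $c\phi$ is still a subsolution by Proposition \ref{prop}). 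On $\partial\Omega_{(j)}$ inside $V$ we have $u_{(j)}=\infty\ge c\phi$, and the part of $\partial(\Omega_{(j)}\cap V)$ lying on $\partial V$ is handled by the previous bound. Applying the maximum principle Theorem \ref{MP} on the domain $\Omega_{(j)}\cap V$ yields $c\phi\le u_{(j)}$ there, and letting $j\to\infty$ gives $c\phi\le u_{\Omega}$ on $\Omega\cap V$. Since $\phi\to\infty$ as $x\to\Gamma$, so does $u_{\Omega}$.

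\textbf{The main obstacle.} The delicate point is the geometry of the comparison domain: $\Omega_{(j)}\cap V$ need not have smooth boundary, and its boundary decomposes into a piece on $\partial\Omega_{(j)}$ (where $u_{(j)}=\infty$, no problem) and a piece on $\partial V$ (where one needs the uniform lower bound on $u_{(j)}$). One must be careful that $\partial V\cap\overline\Omega$ stays uniformly away from $\partial\Omega$ so that the convergence $u_{(j)}\to u_{\Omega}>0$ is locally uniform there and gives a genuine positive lower bound independent of $j$; this is where $u_{\Omega}\not\equiv 0$ is essential. A secondary technical nuisance is ensuring the maximum principle Theorem \ref{MP} applies despite the corners of $\Omega_{(j)}\cap V$ — one typically smooths the neighborhood $V$ or argues that the maximum of $c\phi/u_{(j)}$, if $>1$, would be attained at an interior point, exactly as in the proof of Theorem \ref{MP}, so the corner set causes no trouble because $c\phi$ is bounded near it while comparison only needs the boundary inequality on the closure.
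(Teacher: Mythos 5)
Your proposal is correct and follows essentially the same route as the paper: necessity by taking $\phi = u_{\Omega}$ itself, and sufficiency by comparing a scaled-down subsolution $c\phi$ (a subsolution again by Proposition \ref{prop}) with the exhausting solutions $u_{(j)}$ on $\Omega_{(j)}\cap V$, using $u_{(j)}\geq u_{\Omega}\geq m>0$ on the compact set $\partial V\cap\overline{\Omega}\subset\Omega$ and $u_{(j)}=\infty$ on $\partial\Omega_{(j)}$, then letting $j\to\infty$. The only cosmetic difference is that the paper shrinks $U$ at the outset (so that $\overline{U}$ is compact and misses $\partial\Omega\setminus\Gamma$) instead of introducing an intermediate $V$, and it leaves the corner issue for the maximum principle implicit, which your remark about locating the maximum of $c\phi/u_{(j)}$ at an interior point resolves in the same way as the proof of Theorem \ref{MP}.
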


\begin{proof}
Necessity is obvious. For sufficiency, without loss of generality we may assume that $\overline{U}$ is compact, $\overline{U} \cap (\partial \Omega \setminus \Gamma) = \emptyset$ and $0 < \phi \in C^2(\overline{U} \cap \Omega)$. For $j$ sufficiently large, we have
\[ \partial (U \cap \Omega_{(j)}) = \partial U  \cup ( U \cap \partial \Omega_{(j)} ). \]
Since $u_{\Omega}$ is positive in $\Omega$ and $\partial U \subset \Omega$ is compact, we have
$\inf\limits_{\partial U} u_{\Omega} := m > 0$.
Denote $M := \sup\limits_{\partial U} \phi(x) > 0$ and $A := \max\{\frac{M}{m}, 1\}$. We note that
$u_{(j)} \geq u_{\Omega} \geq m \geq \frac{\phi}{A}$ on $\partial U$, and
$u_{(j)} = \infty > \frac{\phi}{A}$ on $U \cap \partial \Omega_{(j)}$.
By Proposition \ref{prop}, $\frac{\phi}{A}$ is again a positive admissible subsolution of \eqref{conformal2}. By the maximum principle, we arrive at
$u_{(j)} \geq \frac{\phi}{A}$ in $U \cap \Omega_{(j)}$.
Consequently, we obtain $u_{\Omega} \geq \frac{\phi}{A}$ in $U \cap \Omega$.
Hence we proved the sufficiency.
\end{proof}

Now we aim at constructing $\phi(x)$ in a neighborhood of $\Gamma$ as in Theorem \ref{nscondition for regularity}. Let $\rho(x)$ denote the distance of $x$ to $\Gamma$. Define
\[ \Gamma_{\rho_0}  = \{ x \in \Omega | \rho(x) < \rho_0 \}. \]
For $\rho_0$ sufficiently small, $\rho$ is a smooth function in $\Gamma_{\rho_0} \setminus \Gamma$. Define on $\Gamma_{\rho_0} \setminus \Gamma$
\[ \phi(x) = c \rho^{1 - \frac{n}{2}} (x), \]
where $c > 0$ is a constant to be determined later.
Recall that $|\nabla \rho| = 1$. Direct calculation shows that
\[ \phi_i = c \Big( 1 - \frac{n}{2} \Big) {\rho}^{- \frac{n}{2}} \rho_i, \]
\[ \phi_{ij} = c \Big( \frac{n}{2} - 1 \Big) \frac{n}{2} {\rho}^{ - \frac{n}{2} - 1} \rho_i \rho_j + c \Big( 1 - \frac{n}{2} \Big) {\rho}^{- \frac{n}{2}} \rho_{ij}, \]
\[ |\nabla \phi|^2 = c^2 \Big( 1 - \frac{n}{2} \Big)^2 {\rho}^{- n}, \]
\[ \Delta \phi = c \Big( \frac{n}{2} - 1 \Big) \frac{n}{2} {\rho}^{- \frac{n}{2} - 1} + c \Big( 1 - \frac{n}{2} \Big) {\rho}^{- \frac{n}{2}} \Delta \rho. \]
Thus, we have
\begin{equation*}
W_{ij}[\phi] = \frac{1}{2}(n - 2) c {\rho }^{- \frac{n}{2} - 1} \Big( (n - 1 - \rho \Delta \rho) \delta_{ij} - (n - 2) \rho \rho_{ij} \Big).
\end{equation*}
As shown in \cite{Gonzalez-Li-Nguyen}, as $\rho \rightarrow 0$,
\[ \lambda(\rho \nabla^2\rho) \rightarrow \big( \underbrace{0, \ldots, 0}_{n - m + 1}, \underbrace{1, \ldots, 1}_{m - 1} \big). \]
Consequently, as $\rho \rightarrow 0$,
\begin{equation*}
\lambda\big( {\rho}^{ \frac{n}{2} + 1} W_{ij}[\phi] \big) \rightarrow \frac{1}{2}(n - 2) c \big( \underbrace{n - m, \ldots, n - m}_{n - m + 1}, \underbrace{2 - m, \ldots, 2 - m}_{m - 1} \big).
\end{equation*}
If we assume that
\[ v_m = \big( \underbrace{n - m, \ldots, n - m}_{n - m + 1}, \underbrace{2 - m, \ldots, 2 - m}_{m - 1} \big) \in \Gamma_k,  \]
then we have as $\rho \rightarrow 0$,
\[ {\rho}^{ \frac{n}{2} + 1} \sigma_k^{\frac{1}{k}} \big( W_{ij} [\phi] \big) \rightarrow \frac{1}{2} (n - 2) c \sigma_k^{\frac{1}{k}}( v_m ). \]
On the other hand,
\begin{equation*}
\frac{n - 2}{2} \phi^{\frac{n + 2}{n - 2}} = \frac{n - 2}{2} c^{\frac{n + 2}{n - 2}} \rho^{- \frac{n}{2} - 1}.
\end{equation*}
If we choose
\[ 0 < c < \Big( \sigma_k^{\frac{1}{k}} ( v_m ) \Big)^{\frac{n - 2}{4}}, \]
then $\phi$ is the one as in Theorem \ref{nscondition for regularity}, which implies that $u_{\Omega} \rightarrow \infty$ as $x \rightarrow \Gamma$ if $u_{\Omega} \not\equiv 0$.

\vspace{2mm}

\subsection{The case when $v_m \notin \overline{\Gamma}_k$}~

\vspace{2mm}

We adopt the same function as \cite{Gonzalez-Li-Nguyen}
\[ \psi = \psi_{c, d} = ( c \rho^{- \alpha} + d )^\beta, \]
where $\alpha$, $\beta$, $c$ and $d$ are positive constants. Recall that $\rho(x)$ is the distance of $x$ to $\Gamma$.
By direct calculation, we have
\[ \begin{aligned}
\psi_i = & - \alpha \beta c ( c \rho^{- \alpha} + d )^{\beta - 1} \rho^{- \alpha - 1} \rho_i, \\
\psi_{ij} = &  \alpha^2 \beta (\beta - 1) c^2 (c \rho^{- \alpha} + d)^{\beta - 2} \rho^{- 2 \alpha - 2} \rho_i \rho_j \\
& + \alpha (\alpha + 1) \beta c (c \rho^{- \alpha} + d)^{\beta - 1} \rho^{- \alpha - 2} \rho_i \rho_j
 - \alpha \beta c (c \rho^{- \alpha} +  d)^{\beta - 1} \rho^{- \alpha - 1} \rho_{ij}, \\
|\nabla\psi|^2 = & \alpha^2 \beta^2 c^2 ( c \rho^{- \alpha} + d)^{2 \beta - 2} \rho^{- 2 \alpha - 2}, \\
\Delta\psi = &  \alpha^2 \beta (\beta - 1) c^2 (c \rho^{- \alpha} + d)^{\beta - 2} \rho^{- 2 \alpha - 2} + \alpha (\alpha + 1) \beta c (c \rho^{-\alpha} + d)^{\beta - 1} \rho^{- \alpha - 2} \\
& - \alpha \beta c (c \rho^{- \alpha} +  d)^{\beta - 1} \rho^{- \alpha - 1} \Delta \rho. \\
\end{aligned}
\]
It follows that
\begin{equation*}
\begin{aligned}
\psi^{ - \frac{n + 2}{n - 2}} W_{ij} [ \psi ] = & \alpha \beta c \rho^{- \alpha - 2} (c \rho^{- \alpha} +  d)^{- \frac{4 \beta}{n - 2} - 1} \bigg( - (n - 2) \rho \rho_{ij} \\
& + \Big( \alpha (- n  - 2 \beta + 2 ) \frac{ c \rho^{- \alpha}}{c \rho^{- \alpha} + d} + (n - 2) (\alpha + 1) \Big)  \rho_i \rho_j \\
& + \Big( \alpha (2 \beta - 1) \frac{ c \rho^{- \alpha}}{c \rho^{- \alpha} + d} + \alpha + 1 - \rho \Delta \rho \Big)  \delta_{ij} \bigg).
\end{aligned}
\end{equation*}
Choosing an appropriate coordinate system as in \cite{Gonzalez-Li-Nguyen} such that
as $\rho\rightarrow 0$,
\[ \rho  \nabla^2 \rho = \left(
                                    \begin{array}{ccc}
                                      \mathcal{O}(\rho)_{(n - m)\times(n - m)} & \, & \, \\
                                      \, & 0 & \, \\
                                      \, & \, & I_{(m - 1)\times(m-1)} \\
                                    \end{array}
                                  \right)
 \]
and
\[ \nabla\rho \otimes \nabla\rho = \left(
                                               \begin{array}{ccc}
                                                 0_{(n - m) \times (n - m)}  & \, & \, \\
                                                 \, & 1 & \, \\
                                                 \, & \, & 0_{(m - 1) \times (m - 1)} \\
                                               \end{array}
                                             \right),
 \]
we thus obtain
\begin{equation} \label{eq3-1}
 \frac{1}{\alpha \beta c} \rho^{\alpha + 2} (c \rho^{- \alpha} +  d)^{ \frac{4 \beta}{n - 2} + 1} \psi^{ - \frac{n + 2}{n - 2}} W_{ij} [ \psi ]
=  A_m + B_{\alpha\beta}(\zeta) + \mathcal{O}(\rho),
\end{equation}
where $\zeta = \frac{ c \rho^{- \alpha}}{c \rho^{- \alpha} + d}$,
\[ A_m =  \left(
      \begin{array}{ccc}
        (n - m) I & \, & \, \\
        \, & n - m & \, \\
        \, & \, & (2 - m) I \\
      \end{array}
    \right),  \]
and
\[ \tiny{ B_{\alpha\beta}(\zeta) =
\left(
  \begin{array}{ccc}
    \big(  2 \alpha \beta \zeta + \alpha (1 - \zeta) + 2 - n \big)  I  & \, & \, \\
    \, & ( n - 1 )\alpha ( 1 - \zeta )   & \, \\
   \, & \, &  \big(  2 \alpha \beta \zeta + \alpha (1 - \zeta) + 2 - n \big)  I \\
  \end{array}
\right).}
\]

Since $v_m \notin \overline{\Gamma}_k$, we may choose $\alpha$ sufficiently small, and then an appropriate $\beta$ such that $\alpha \beta$ is slightly larger than $\frac{n}{2} - 1$ in order to make
\[ \lambda( A_m + B_{\alpha\beta}(\zeta) ) \notin \overline{\Gamma}_k \]
for all $0 < \zeta < 1$.

Then we can choose a sufficiently small $0< \rho_0 < 1$ such that on $0 < \rho < \rho_0$ and for all $0 < \zeta < 1$, the righthand side of \eqref{eq3-1} satisfies
\[ \lambda\big(  A_m + B_{\alpha\beta}(\zeta) + \mathcal{O}(\rho) \big) \notin  \overline{\Gamma}_k.  \]
It follows that
$\lambda \big( W_{ij}[\psi] \big) \notin \overline{\Gamma}_k$ on $\{ 0 < \rho < \rho_0 \}$  for all $c > 0$ and $d > 0$, which means that $\psi$ is a supersolution of \eqref{conformal2}.

Now we compare $u_{\Omega}$ and $\psi$. Choosing $d$ sufficiently large depending on $\rho_0$ such that on $\rho = \rho_0$ we have $\psi \geq d^{\beta} \geq u_{\Omega}$. By comparing $u_{\Omega}$ with the first function in Proposition \ref{subsolution} in the ball $B_{\rho(x)} (x)$ we can deduce that
\[ u_{\Omega}(x) \leq C(n, k) \rho^{1 - \frac{n}{2}} \leq c^{\beta} \rho^{- \alpha \beta} < \psi \]
in $0 < \rho \leq \delta$, where $\delta$ is a sufficiently small positive constant depending on $c$. By the maximum principle we have $u_{\Omega} \leq \psi$ in $\delta < \rho < \rho_0$. Letting $\delta \rightarrow 0$, we arrive at $u_{\Omega} \leq \psi$ in $0 < \rho < \rho_0$. Next letting $c \rightarrow 0$ we deduce that $u_{\Omega} \leq d^{\beta}$ in $0 < \rho < \rho_0$.

\vspace{4mm}

\section{More general equations}

\vspace{4mm}

By change of variable $u = e^{\frac{n - 2}{2} v}$, equation \eqref{eq4-1} is equivalent to
\begin{equation} \label{eq4-3}
\sigma_k \bigg( (n - 2) e^{- 2 v}  \mathcal{W} [ v ] \bigg) + \alpha(x) \sigma_{k - 1} \bigg( (n - 2) e^{- 2 v}  \mathcal{W} [ v ]  \bigg) = \alpha_0(x),
\end{equation}
where
\[  \mathcal{W}[v] = g^{-1} \bigg( \nabla^2 v - d v \otimes d v + \Big( \frac{\Delta v}{n - 2} + |\nabla v|^2 \Big) g - \frac{\text{Ric}_g}{n - 2} \bigg). \]
For convenience, we denote $\frac{\text{Ric}_g}{n - 2} = \mathcal{R}$.

Now we derive a priori estimates for $(k - 1)$-admissible solutions to equation \eqref{eq4-3} on a smooth compact manifold with boundary, that is, on $\overline{M} : = M \cup \partial M$.
We write \eqref{eq4-3} in the following form
\begin{equation} \label{eq4-4}
F \big( \mathcal{W} [ v ] \big) := \frac{\sigma_k \big( \mathcal{W} [ v ] \big)}{\sigma_{k - 1} \big( \mathcal{W} [ v ]  \big) } - \frac{\alpha_0(x) e^{2 k v}}{(n - 2)^k} \frac{1}{\sigma_{k - 1} \big( \mathcal{W} [ v ]  \big) } = - \frac{\alpha(x)}{n - 2} e^{2 v}.
\end{equation}
As shown in \cite{Guan-Zhang}, for $(k - 1)$-admissible $v$, namely $\lambda\big( \mathcal{W} [ v ] \big) \in \Gamma_{k - 1}$, equation \eqref{eq4-4} is elliptic and the operator $F \big( \mathcal{W} [ v ] \big)$
is concave with respect to $\{ \mathcal{W}_{ij} \}$.

\vspace{2mm}

\subsection{Gradient estimates}~

\vspace{2mm}

Let $v \in C^3(M) \cap C^1(\overline{M})$ be a $(k - 1)$-admissible solution of \eqref{eq4-3}.
We adopt the same test function as \cite{Guan08}: $\Phi = \zeta(x) w e^{\eta (v)}$, where $w = \frac{1}{2} |\nabla v|^2$, $\zeta$ and $\eta$ are functions to be chosen later. Assume that $\Phi$ attains its maximum at an interior point $x_0 \in M$. Choose a local orthonormal frame $e_1, \ldots, e_n$ about $x_0$. Then, at $x_0$, we have
\begin{equation} \label{eq4-5}
\frac{\nabla_i \zeta}{\zeta} + \frac{\nabla_i w}{w} + \eta' \nabla_i v = 0,
\end{equation}
\begin{equation} \label{eq4-6}
F^{ij} \Big( \frac{\nabla_{ij} \zeta}{\zeta} - \frac{\nabla_i \zeta \nabla_j \zeta}{\zeta^2} + \frac{\nabla_{ij} w}{w} - \frac{\nabla_i w \nabla_j w}{w^2} + \eta' \nabla_{ij} v + \eta'' \nabla_i v \nabla_j v \Big) \leq 0,
\end{equation}
where $F^{ij} = \frac{\partial F}{\partial \mathcal{W}_{ij}} \big( \mathcal{W} [v] \big)$.

By direct calculation,
\[ \nabla_i w = \nabla_{im} v \nabla_m v, \]
\[ \nabla_{ij} w = \nabla_{ijm} v \nabla_m v + \nabla_{im} v \nabla_{j m} v. \]
By \eqref{eq4-5}, we have
\begin{equation} \label{eq4-10}
F^{ij} \frac{\nabla_i w \nabla_j w}{w^2} \leq 3 F^{ij} \frac{\nabla_i \zeta \nabla_j \zeta}{\zeta^2} + \frac{3 \eta'^2}{2} F^{ij} \nabla_i v \nabla_j v.
\end{equation}
Hence \eqref{eq4-6} becomes
\begin{equation} \label{eq4-6-2}
\begin{aligned}
& F^{ij} \Big( \frac{\nabla_{ij} \zeta}{\zeta} - \frac{5 \nabla_i \zeta \nabla_j \zeta}{2 \zeta^2} \Big) +  \frac{1}{w} \Big( \delta_{lm} - \frac{\nabla_l v \nabla_m v}{2 w} \Big) F^{ij} \nabla_{il} v \nabla_{jm} v \\ & + \frac{F^{ij} \nabla_{ijm} v \nabla_m v}{w}
 + \eta' F^{ij} \nabla_{ij} v + \Big( \eta'' - \frac{3}{4} \eta'^2 \Big) F^{ij} \nabla_i v \nabla_j v \leq 0.
\end{aligned}
\end{equation}

Choose a smooth function $\zeta$ such that
\[ 0 \leq \zeta \leq 1, \quad |\nabla \zeta| \leq b_0 \sqrt{\zeta}, \quad |\nabla^2 \zeta| \leq b_0. \]
Then \eqref{eq4-6-2} reduces to
\begin{equation} \label{eq4-11}
\frac{F^{ij} \nabla_{ijm} v \nabla_m v}{w}
 + \eta' F^{ij} \nabla_{ij} v + \Big( \eta'' - \frac{3}{4} \eta'^2 \Big) F^{ij} \nabla_i v \nabla_j v \leq \frac{C}{\zeta} \sum F^{ii}.
\end{equation}
Similarly, we have
\begin{equation} \label{eq4-6-3}
\begin{aligned}
\frac{\Delta \nabla_{m} v \nabla_m v}{w}
 + \eta' \Delta v + \Big( \eta'' - \frac{3}{4} \eta'^2 \Big) | \nabla v |^2 \leq \frac{C}{\zeta}.
\end{aligned}
\end{equation}

Differentiating \eqref{eq4-4} yields
\begin{equation} \label{eq4-7}
F^{ij} \nabla_l \mathcal{W}_{ij} - \frac{\nabla_l \alpha_0 e^{2 k v} + \alpha_0 e^{2 k v} 2 k \nabla_l v}{(n - 2)^k} \frac{1}{\sigma_{k - 1} \big( \mathcal{W} [v] \big)} = - \frac{\nabla_l \alpha e^{2 v} + \alpha e^{2 v} 2 \nabla_l v}{n - 2},
\end{equation}
where
\[ \nabla_l \mathcal{W}_{ij} [v] = \nabla_{lij} v - \nabla_{li} v \nabla_j v - \nabla_i v \nabla_{lj} v + \Big( \frac{\nabla_l \Delta v}{n - 2} + 2 \nabla_l w \Big) \delta_{ij} - \nabla_l \mathcal{R}_{ij}. \]
Since
\[ \nabla_{ijl} v = \nabla_{lij} v + R_{lij}^m \nabla_m v,  \]
we have
\[ \nabla_l \Delta v = \Delta \nabla_l v - \sum\limits_m R_{lmm}^s \nabla_s v.  \]
Assume that $|\nabla v| \geq 1$. Combining \eqref{eq4-11}, \eqref{eq4-6-3} and \eqref{eq4-7} we obtain
\begin{equation} \label{eq4-9}
\begin{aligned}
& \eta' F^{ij} \nabla_{ij} v  + \eta' \frac{\Delta v}{n - 2} \sum F^{ii} \\
& + \Big( \eta'' - \frac{3}{4} \eta'^2 \Big) F^{ij} \nabla_i v \nabla_j v + \Big( \eta'' - \frac{3}{4} \eta'^2 \Big) \frac{| \nabla v |^2}{n - 2} \sum F^{ii} \\
& + \frac{e^{2 k v} \nabla_l \alpha_0 \nabla_l v}{(n - 2)^k \sigma_{k - 1} w} + \frac{ 4 k \alpha_0 e^{2 k v}}{(n - 2)^k \sigma_{k - 1}} - \frac{e^{2 v} \nabla_l \alpha \nabla_l v}{(n - 2) w} - \frac{4 \alpha e^{2 v}}{n - 2} \\
\leq  & \frac{C}{\zeta} \sum F^{ii}  - \frac{F^{ij} \nabla_{li} v \nabla_j v \nabla_l v}{w} - \frac{ F^{ij} \nabla_i v \nabla_{lj} v \nabla_l v}{w} + \frac{ 2 \nabla_l w \nabla_l v}{w} \sum F^{ii} .
\end{aligned}
\end{equation}

By \eqref{eq4-5},
\begin{equation} \label{eq4-12}
\begin{aligned}
& - \frac{F^{ij} \nabla_{li} v \nabla_j v \nabla_l v}{w} - \frac{ F^{ij} \nabla_i v \nabla_{lj} v \nabla_l v}{w} + \frac{ 2 \nabla_l w \nabla_l v}{w} \sum F^{ii} \\
= & F^{ij} \Big( \frac{\nabla_i \zeta}{\zeta} + \eta' \nabla_i v \Big) \nabla_j v + F^{ij} \nabla_i v \Big( \frac{\nabla_j \zeta}{\zeta} + \eta' \nabla_j v \Big)
\\ & - 2 \nabla_l v \Big( \frac{\nabla_l \zeta}{\zeta} + \eta' \nabla_l v \Big) \sum F^{ii} \\
\leq & \frac{C}{\sqrt{\zeta}} |\nabla v| \sum F^{ii} + 2 \eta' F^{ij} \nabla_i v \nabla_j v - 2 \eta' |\nabla v|^2 \sum F^{ii}.
\end{aligned}
\end{equation}
Also, we have
\begin{equation*}
F^{ij} \mathcal{W}_{ij} = \frac{\sigma_k}{\sigma_{k - 1}} + (k - 1) \frac{\alpha_0 e^{2k v}}{(n - 2)^k \sigma_{k - 1}},
\end{equation*}
and
\[ F^{ij} \mathcal{W}_{ij} = F^{ij} \nabla_{ij} v - F^{ij} \nabla_i v \nabla_j v +  \frac{\Delta v}{n - 2} \sum F^{ii} + |\nabla v|^2 \sum F^{ii} - F^{ij} \mathcal{R}_{ij}. \]
Therefore,
\begin{equation} \label{eq4-13}
\begin{aligned}
& \eta' F^{ij} \nabla_{ij} v  + \eta' \frac{\Delta v}{n - 2} \sum F^{ii} \\
\geq & \eta' \frac{\sigma_k}{\sigma_{k - 1}} + (k - 1) \frac{\alpha_0 e^{2 k v}}{(n - 2)^k \sigma_{k - 1}} \eta' \\
& + \eta' F^{ij} \nabla_i v \nabla_j v - \eta' |\nabla v|^2 \sum F^{ii} - C |\eta'| \sum F^{ii}.
\end{aligned}
\end{equation}
Taking \eqref{eq4-12} and \eqref{eq4-13} into \eqref{eq4-9}, and in view of \eqref{eq4-4}, we obtain
\begin{equation} \label{eq4-14}
\begin{aligned}
& \frac{e^{2 k v}}{(n - 2)^k \sigma_{k - 1}} \Big( k \eta' \alpha_0 + 4 k \alpha_0 + \frac{ \nabla_l \alpha_0 \nabla_l v}{w}\Big) \\
& + \Big( \eta'' - \frac{3}{4} \eta'^2 - \eta' \Big) F^{ij} \nabla_i v \nabla_j v + \Big( \eta'' - \frac{3}{4} \eta'^2 + (n - 2) \eta' \Big) \frac{| \nabla v |^2}{n - 2} \sum F^{ii} \\
\leq  & C \Big( \frac{1}{\zeta} + \frac{|\nabla v|}{\sqrt{\zeta}} + |\eta'| \Big) \sum F^{ii} + \Big( \eta' \alpha + 4 \alpha + \frac{\nabla_l \alpha \nabla_l v}{w} \Big) \frac{e^{2 v}}{n - 2}.
\end{aligned}
\end{equation}

Choose
\[  \eta (v) = \Big( \frac{3}{2} + v - \inf_{\{ \zeta > 0 \} } v \Big)^{- N}, \]
where $N \geq 1$ is sufficiently large such that
\[\begin{aligned}
& \eta'' - \frac{3}{4} \eta'^2 - \eta' \\
\geq & \eta'' - \frac{3}{4} \eta'^2 + (n - 2) \eta'   \\
\geq & \frac{N^2}{2} \Big( \frac{3}{2} + v - \inf_{\{ \zeta > 0 \} } v \Big)^{- N - 2}.
\end{aligned} \]
By Newton-Maclaurin inequality
\[ \sigma_k \sigma_{k - 2} \leq \frac{(n - k + 1)(k - 1)}{(n - k + 2) k} \sigma_{k - 1}^2, \]
we know that
\[\begin{aligned}
\sum F^{ii} = & (n - k + 1) - \frac{(n - k + 2) \sigma_k \sigma_{k - 2}}{\sigma_{k - 1}^2} + \frac{(n - k + 2) \alpha_0 e^{2 k v} \sigma_{k - 2}}{(n - 2)^k \sigma_{k - 1}^2} \\
\geq & \frac{n - k + 1}{k}.
\end{aligned} \]
Therefore,
\[
\bigg( \eta' \alpha + 4 \alpha + \frac{\nabla_l \alpha \nabla_l v}{w} \bigg) \frac{e^{2 v}}{n - 2} \leq C \bigg( |\eta'| + 1 + \frac{1}{|\nabla v|} \bigg) \sum F^{ii}.
 \]
Also, we know that
\[  k \eta' \alpha_0 + 4 k \alpha_0 + \frac{ \nabla_l \alpha_0 \nabla_l v}{w} \geq \alpha_0 \bigg( - k N \Big(\frac{2}{3} \Big)^{N + 1} + 4 k - \frac{C}{\alpha_0 |\nabla v|} \bigg) > 0 \]
for $N$ sufficiently large and $|\nabla v| \geq \frac{C}{2 k \inf \alpha_0}$.
Hence, \eqref{eq4-14} reduces to
\[ |\nabla v|^2 \leq C \Big( \frac{1}{\zeta} + \frac{|\nabla v|}{\sqrt{\zeta}} \Big). \]
Consequently,
\begin{equation} \label{eq4-33}
\sqrt{\zeta} |\nabla v| \leq C.
\end{equation}

Taking $\zeta$ to be a standard cutoff function in a geodesic ball $B_r$ of radius $r > 0$ satisfying
\[ \vert \nabla \zeta \vert \leq \frac{C}{r}, \]
by \eqref{eq4-33} we obtain

\begin{thm} \label{interior gradient estimate}
If $v \in C^3(B_r)$ is a $(k - 1)$-admissible solution of \eqref{eq4-3} in a geodesic ball $B_r \subset M$ of radius $r > 0$, then
\[ \sup\limits_{B_{\frac{r}{2}}} \vert \nabla v \vert \leq C, \]
where $C$ depends on $r^{- 1}$, $n$, $k$, $\Vert v \Vert_{C^0 (B_r)}$, $\Vert \alpha \Vert_{C^1(B_r)}$, $\Vert \alpha_0 \Vert_{C^1{(B_r)}}$ and $\inf\limits_{B_r} \alpha_0$.
\end{thm}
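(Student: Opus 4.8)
The estimate \eqref{eq4-33} obtained above is essentially the whole statement: it asserts that for any localizer $\zeta$ with $0\le\zeta\le1$, $|\nabla\zeta|\le b_0\sqrt\zeta$, $|\nabla^2\zeta|\le b_0$ one has $\sqrt\zeta\,|\nabla v|\le C$, with $C$ depending only on $b_0$, $n$, $k$, $\|v\|_{C^0(\{\zeta>0\})}$, $\|\alpha\|_{C^1}$, $\|\alpha_0\|_{C^1}$ and $\inf\alpha_0$. So the plan is just to feed it a good cutoff. I would fix the center $x_0$ of $B_r$ and, shrinking $r$ if necessary so that $B_r$ lies within the injectivity radius at $x_0$, work with $d(\cdot)=\mathrm{dist}(x_0,\cdot)$, which is then smooth on $B_r\setminus\{x_0\}$ with $|\nabla d|=1$ and $|\nabla^2 d|\le C/r$ on $\{r/4<d<r\}$. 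Picking $\chi\in C^\infty([0,\infty))$ with $\chi\equiv1$ on $[0,\tfrac12]$, $\chi\equiv0$ on $[\tfrac34,\infty)$, $0\le\chi\le1$, I would set $\tilde\zeta=\chi(d/r)$ and $\zeta=\tilde\zeta^{\,2}$. Then $\zeta\equiv1$ on $B_{r/2}$, $\mathrm{supp}\,\zeta\Subset B_r$, and a one-line computation gives $|\nabla\zeta|=2\tilde\zeta|\nabla\tilde\zeta|\le (C/r)\sqrt\zeta$ and $|\nabla^2\zeta|\le 2|\nabla\tilde\zeta|^2+2\tilde\zeta|\nabla^2\tilde\zeta|\le C(r^{-1}+r^{-2})$, so $\zeta$ is admissible in \eqref{eq4-33} with $b_0=C(r^{-1}+r^{-2})$.

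With this $\zeta$ in hand I would run the maximum-principle argument of the preceding paragraphs on $\Phi=\zeta\,w\,e^{\eta(v)}$. If $\Phi\equiv0$ then $\nabla v\equiv0$ on $\{\zeta>0\}\supset B_{r/2}$ and there is nothing to prove; otherwise $\Phi$ attains a positive maximum at some interior point of $M$ (it vanishes off $\mathrm{supp}\,\zeta\Subset B_r$), so \eqref{eq4-5}--\eqref{eq4-6} are valid there. One may assume at that point $|\nabla v|\ge\max\{1,\,C/(2k\inf\alpha_0)\}$ — the two thresholds required to enter \eqref{eq4-9} and to sign the $\alpha_0$-block in \eqref{eq4-14} — since otherwise $\Phi$ at the maximum point, hence everywhere, is bounded by a constant, whence $\zeta|\nabla v|^2\le C$ and $\sqrt\zeta\,|\nabla v|\le C$ directly. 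The chain \eqref{eq4-9}--\eqref{eq4-14} then applies unchanged and yields $\sup_{\{\zeta>0\}}\sqrt\zeta\,|\nabla v|\le C$. Restricting to $B_{r/2}$, where $\zeta\equiv1$, gives $\sup_{B_{r/2}}|\nabla v|\le C$; tracking $b_0$ through the constants identifies the dependence of $C$ on $r^{-1},n,k,\|v\|_{C^0(B_r)},\|\alpha\|_{C^1(B_r)},\|\alpha_0\|_{C^1(B_r)},\inf_{B_r}\alpha_0$, as stated.

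The genuine difficulties all lie upstream of the statement and have already been settled in the computation leading to \eqref{eq4-33}: controlling the third-order term $F^{ij}\nabla_{ijm}v\,\nabla_m v$ via the differentiated equation \eqref{eq4-7} together with the commutation $\nabla_{ijl}v=\nabla_{lij}v+R^m_{lij}\nabla_m v$; choosing $\eta(v)=(\tfrac32+v-\inf v)^{-N}$ with $N$ large so that the concave part $\eta''-\tfrac34\eta'^2$ dominates the linear-in-$\eta'$ corrections with a quantitatively positive surplus, which is what makes the good terms $\propto(\sum F^{ii})|\nabla v|^2$ absorb the error; and using $\inf\alpha_0>0$ together with $\sum F^{ii}\ge(n-k+1)/k$ (Newton--Maclaurin) to force $k\eta'\alpha_0+4k\alpha_0+\nabla_l\alpha_0\nabla_l v/w>0$ once $|\nabla v|$ is large — precisely the step that makes $C$ depend on $\inf\alpha_0$. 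For the statement itself, once \eqref{eq4-33} is available the only mild technical point I expect to be the ``hard part'' is pure bookkeeping: one needs a cutoff satisfying $|\nabla\zeta|\le b_0\sqrt\zeta$ rather than merely $|\nabla\zeta|\le b_0$ (hence the square $\zeta=\tilde\zeta^2$), and one must know that $B_r$ sits inside the injectivity radius so that such a smooth radial cutoff with the stated derivative bounds genuinely exists.
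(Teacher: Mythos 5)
Your proposal is correct and follows exactly the paper's route: the paper derives the localized estimate \eqref{eq4-33} and then simply states that a standard cutoff in $B_r$ yields the theorem, which is precisely what you do, with the added (and welcome) care of squaring the bump function so that $|\nabla\zeta|\le b_0\sqrt{\zeta}$ holds and of disposing of the degenerate cases where $\Phi\equiv 0$ or $|\nabla v|$ is already small at the maximum point.
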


Let $\zeta \equiv 1$. By \eqref{eq4-33} we have

\begin{thm} \label{global gradient estimate}
Let $v \in C^3(M) \cap C^1 (\overline{M})$ be a $(k - 1)$-admissible solution of \eqref{eq4-3} in $\overline{M}$. Then
\[ \max\limits_{\overline{M}} \vert \nabla v \vert \leq C, \]
where $C$ depends on $n$, $k$, $\Vert v \Vert_{C^0 (\overline{M})}$, $\max\limits_{\partial M} \vert \nabla v \vert$, $\Vert \alpha \Vert_{C^1(M)}$, $\Vert \alpha_0 \Vert_{C^1{(M)}}$ and $\inf\limits_{M} \alpha_0$.
\end{thm}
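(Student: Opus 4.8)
The plan is to re-run the maximum-principle computation already carried out for the interior estimate, this time with the cutoff $\zeta \equiv 1$ on all of $\overline{M}$, and to insert one extra alternative to account for the boundary. First I would set $\Phi = w\, e^{\eta(v)}$ with $w = \frac{1}{2}|\nabla v|^2$ and $\eta$ exactly as in the proof of Theorem \ref{interior gradient estimate}, now with $\{\zeta > 0\} = \overline{M}$. Since $3/2 \le 3/2 + v - \inf_{\overline{M}} v \le 3/2 + \sup_{\overline{M}} v - \inf_{\overline{M}} v$, the quantity $e^{\eta(v)}$ is bounded above and below by positive constants depending only on $n$, $k$ and $\Vert v\Vert_{C^0(\overline{M})}$; hence a bound on $\sup_{\overline{M}}\Phi$ is equivalent to a bound on $\sup_{\overline{M}}|\nabla v|$. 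Because $v \in C^1(\overline{M})$ and $\overline{M}$ is compact, $\Phi$ attains its maximum at some $x_0 \in \overline{M}$.

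Next I would split into two cases. If $x_0 \in \partial M$, then $w(x_0) \le \frac{1}{2}\max_{\partial M}|\nabla v|^2$, and from $\Phi \le \Phi(x_0)$ together with the two-sided bound on $e^{\eta(v)}$ one gets $w \le C\,\max_{\partial M}|\nabla v|^2$ throughout $\overline{M}$, which is already the claimed estimate. If instead $x_0 \in M$, one may assume $|\nabla v(x_0)|$ exceeds the explicit threshold $\max\{1,\, C/(2k\inf_M\alpha_0)\}$ used to pass from \eqref{eq4-14} to \eqref{eq4-33}, since otherwise $w(x_0)$, and hence $\Phi$, is already bounded and we are done. Under this assumption every step \eqref{eq4-5}--\eqref{eq4-14} is valid at $x_0$ with $\zeta \equiv 1$; then $\nabla\zeta = 0$ and $\nabla^2\zeta = 0$, so all terms carrying a factor $1/\zeta$ or $|\nabla\zeta|/\zeta$ drop out, and with the same choice of $\eta$ the inequality \eqref{eq4-14} collapses exactly to $|\nabla v(x_0)|^2 \le C$, i.e. $w(x_0) \le C$. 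Propagating this bound off $x_0$ via $\Phi \le \Phi(x_0)$ and the bound on $e^{\eta(v)}$ again yields $\sup_{\overline{M}}|\nabla v| \le C$.

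I do not expect a genuine obstacle here: the analytic content is entirely contained in the derivation of \eqref{eq4-33}, which is already in hand, and the present statement is its $\zeta \equiv 1$ specialization plus the boundary alternative. The one point deserving care is the bookkeeping of constants: one must check that setting $\zeta \equiv 1$ removes every hidden dependence on $\sup|\nabla\zeta|$, so that no factor $r^{-1}$ reappears. It does, precisely because the $\zeta$-dependent error terms are literally absent in this case. Tracing through the constants, the resulting $C$ depends only on $n$, $k$, $\Vert v\Vert_{C^0(\overline{M})}$, $\max_{\partial M}|\nabla v|$, $\Vert\alpha\Vert_{C^1(M)}$, $\Vert\alpha_0\Vert_{C^1(M)}$ and $\inf_M\alpha_0$, as asserted.
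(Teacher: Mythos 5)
Your proposal is correct and is essentially the paper's own argument: the paper's proof of this theorem is literally ``Let $\zeta \equiv 1$. By \eqref{eq4-33} we have...'', and the boundary alternative you spell out (which is where the dependence on $\max_{\partial M}|\nabla v|$ enters) is exactly the intended reading. The only cosmetic quibble is that with $\zeta \equiv 1$ the $1/\zeta$ terms do not vanish but become harmless constants, so \eqref{eq4-14} yields $|\nabla v|^2 \le C(1+|\nabla v|)$ rather than $|\nabla v|^2 \le C$ directly; the conclusion is the same.
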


\vspace{2mm}

\subsection{Interior and global estimates for second derivatives}~

\vspace{2mm}

Let $v \in C^4(M) \cap C^2(\overline{M})$ be a $(k - 1)$-admissible solution of \eqref{eq4-3}. We adopt the idea of Guan \cite{Guan08} to derive interior and global estimates for second derivatives.
Consider the test function
\[ \Psi = \zeta e^{\eta(w)} \Big( \nabla_{\xi \xi} v - (\nabla_{\xi} v)^2 - \mathcal{R}_{\xi\xi} \Big), \]
where $\xi$ is a unit tangent vector to $\overline{M}$ at $x$, $w = \frac{1}{2} |\nabla v|^2$, $\zeta$ and $\eta$ are functions to be chosen later, with $\zeta$ smooth and satisfying
\[ 0 \leq \zeta \leq 1, \quad |\nabla \zeta| \leq b_0, \quad |\nabla^2 \zeta| \leq b_0. \]
Assume that $\Psi$ is attained at some interior point $x_0 \in M$ and for some unit vector $\xi \in T_{x_0} \overline{M}$. Choose a smooth local orthonormal frame $e_1, \ldots, e_n$ about $x_0$ such that $e_1(x_0) = \xi$ and $\big\{ \mathcal{W}_{ij}[v] (x_0) \big\}$ is diagonal. Let $G =  \nabla_{11} v - (\nabla_{1} v)^2 - \mathcal{R}_{11}$ and assume that $G (x_0) \geq 1$.
At $x_0$, where the function $ \zeta e^{\eta(w)} G$, which is locally defined near $x_0$, attains its maximum,
\begin{equation} \label{eq4-15}
\frac{\nabla_{i} G}{G} + \nabla_i \eta + \frac{\nabla_i \zeta}{\zeta} = 0,
\end{equation}
and
\begin{equation} \label{eq4-16}
F^{ii} \bigg( \frac{\nabla_{ii} G}{G} - \Big( \frac{ \nabla_{i} G }{G} \Big)^2 + \nabla_{ii} \eta + \frac{\nabla_{ii} \zeta}{\zeta} - \Big( \frac{\nabla_i \zeta}{\zeta} \Big)^2 \bigg) \leq 0.
\end{equation}

By \eqref{eq4-15} and Cauchy-Schwartz inequality,
\[ \bigg(\frac{\nabla_{i} G}{G} \bigg)^2 \leq (1 + \epsilon) (\nabla_i \eta)^2 + \Big( 1 + \frac{1}{\epsilon} \Big) \Big( \frac{\nabla_i \zeta}{\zeta} \Big)^2, \quad \epsilon > 0. \]
Therefore, \eqref{eq4-16} reduces to
\begin{equation} \label{eq4-17}
F^{ii} \bigg( \frac{\nabla_{ii} G}{G} + \nabla_{ii} \eta - (1 + \epsilon) (\nabla_i \eta)^2  \bigg) \leq \frac{C}{\zeta^2} \sum F^{ii}.
\end{equation}
Similarly,
\begin{equation} \label{eq4-18}
\frac{\Delta G}{G} + \Delta \eta - (1 + \epsilon) |\nabla \eta|^2 \leq \frac{C}{\zeta^2}.
\end{equation}
Next, we compute
\begin{equation} \label{eq4-35}
\nabla_i G = \nabla_{i11} v - 2 \nabla_1 v \nabla_{i1} v - \nabla_i \mathcal{R}_{11},
\end{equation}
\begin{equation} \label{eq4-36}
\begin{aligned}
\nabla_{ii} G = & \nabla_{ii11} v - 2 \nabla_1 v \nabla_{ii1} v - 2 (\nabla_{i1} v)^2 - \nabla_{ii} \mathcal{R}_{11} \\
\geq & \nabla_{11ii} v - 2 \nabla_1 v \nabla_{1ii} v - 2 ( \nabla_{i1} v )^2 - C \bigg( 1 + \sum\limits_{j l} |\nabla_{jl} v| \bigg),
\end{aligned}
\end{equation}
\begin{equation} \label{eq4-37}
\Delta G \geq \nabla_{11} \Delta v - 2 \nabla_1 v \nabla_{1} \Delta v - 2 \sum\limits_i (\nabla_{i1} v)^2 - C \bigg( 1 + \sum\limits_{j l} |\nabla_{jl} v| \bigg),
\end{equation}
\begin{equation} \label{eq4-38}
\begin{aligned}
& F^{ii} \nabla_{1} \bigg( \nabla_{ii} v + \frac{\Delta v}{n - 2} \bigg) =  F^{ii} \nabla_{1} \Big( \mathcal{W}_{ii} + (\nabla_i v)^2 - |\nabla v|^2 + \mathcal{R}_{ii} \Big)   \\
= & F^{ii} \nabla_1 \mathcal{W}_{ii} + 2 F^{ii} \nabla_i v \nabla_{1i} v - 2 \sum\limits_k \nabla_k v \nabla_{1k} v \sum F^{ii} + F^{ii} \nabla_1 \mathcal{R}_{ii},
\end{aligned}
\end{equation}
\begin{equation} \label{eq4-19}
\begin{aligned}
& F^{ii} \nabla_{11} \bigg( \nabla_{ii} v + \frac{\Delta v}{n - 2} \bigg) =  F^{ii} \nabla_{11} \Big( \mathcal{W}_{ii} + ( \nabla_i v )^2 - |\nabla v|^2 + \mathcal{R}_{ii} \Big)   \\
\geq &  F^{ii} \nabla_{11} \mathcal{W}_{ii} + 2 F^{ii} \nabla_i v \nabla_{11i} v + 2 F^{ii} (\nabla_{1i} v )^2 \\
& - 2 \sum\limits_k \nabla_{11k} v \nabla_k v \sum\limits_i F^{ii} - 2 \sum\limits_k ( \nabla_{1k} v )^2 \sum\limits_i F^{ii} - C \sum F^{ii}.
\end{aligned}
\end{equation}
By \eqref{eq4-15}, \eqref{eq4-35} and the formula for interchanging order of covariant derivatives,
\begin{equation} \label{eq4-39}
\begin{aligned}
& 2 F^{ii} \nabla_i v \nabla_{11i} v  - 2 \sum\limits_k \nabla_{11k} v \nabla_k v \sum\limits_i F^{ii} \\
\geq & 4 \nabla_1 v F^{ii} \nabla_{i1} v \nabla_i v - 4 \nabla_1 v \sum\limits_k \nabla_{k1} v \nabla_k v \sum F^{ii} \\
& - C \bigg( |\nabla \eta| + \frac{1}{\zeta} \bigg) G \sum F^{ii}.
\end{aligned}
\end{equation}
Combining \eqref{eq4-36}--\eqref{eq4-39} yields,
\begin{equation} \label{eq4-20}
\begin{aligned}
& F^{ii} \bigg( \nabla_{ii} G + \frac{\Delta G}{n - 2} \bigg) \\
\geq & F^{ii} \nabla_{11} \mathcal{W}_{ii} - 2 \nabla_1 v F^{ii} \nabla_1 \mathcal{W}_{ii} - \Big( 2 + \frac{2}{n - 2} \Big) \sum\limits_k (\nabla_{1k} v )^2 \sum F^{ii} \\
& - C \bigg( |\nabla \eta| G + \frac{G}{\zeta} + \sum\limits_{j, l} |\nabla_{jl} v| \bigg) \sum F^{ii}.
\end{aligned}
\end{equation}

Differentiating equation \eqref{eq4-4} twice and by the concavity of $\frac{\sigma_k}{\sigma_{k - 1}}$,
\begin{equation} \label{eq4-21}
\begin{aligned}
& F^{ii} \nabla_{11} \mathcal{W}_{ii}  + \frac{\alpha_0 e^{2 k v}}{(n - 2)^k \sigma_{k - 1}} \bigg( \frac{\sigma_{k - 1}^{ij, rs}}{\sigma_{k - 1}} - \frac{2 \sigma_{k - 1}^{ij} \sigma_{k - 1}^{rs}}{\sigma_{k - 1}^2} \bigg) \nabla_1 \mathcal{W}_{ij} \nabla_1 \mathcal{W}_{rs} \\
& + \frac{\big( 2 \nabla_1 \alpha_0 e^{2 k v} + 4 k \alpha_0 e^{2 k v} \nabla_1 v \big) \sigma_{k - 1}^{ii} \nabla_1 \mathcal{W}_{ii} }{(n - 2)^k \sigma_{k - 1}^2}  \\
& - \frac{\nabla_1 \alpha_0 e^{2kv} 4 k \nabla_1 v + \alpha_0 e^{2k v} 4 k^2 (\nabla_1 v)^2 + \alpha_0 e^{2 k v} 2 k \nabla_{11} v + \nabla_{11} \alpha_0 e^{2k v}}{(n - 2)^k \sigma_{k - 1}} \\
\geq & - \frac{\nabla_{11} \alpha e^{2v} + 4 \nabla_1 \alpha e^{2 v} \nabla_1 v + 4 \alpha e^{2 v} (\nabla_1 v)^2 + 2 \alpha e^{2v} \nabla_{11} v}{n - 2}.
\end{aligned}
\end{equation}
By the concavity of $\sigma_{k - 1}^{\frac{1}{k - 1}}$,
\[ \bigg( \frac{\sigma_{k - 1}^{ij, rs}}{\sigma_{k - 1}} + \Big( \frac{1}{k - 1} - 1 \Big) \frac{\sigma_{k - 1}^{ij} \sigma_{k - 1}^{rs}}{\sigma_{k - 1}^2} \bigg) \nabla_1 \mathcal{W}_{ij} \nabla_1 \mathcal{W}_{rs} \leq 0. \]
Consequently,
\[ \begin{aligned}
& \frac{\alpha_0 e^{2 k v}}{(n - 2)^k \sigma_{k - 1}} \bigg( \frac{\sigma_{k - 1}^{ij, rs}}{\sigma_{k - 1}} - \frac{2 \sigma_{k - 1}^{ij} \sigma_{k - 1}^{rs}}{\sigma_{k - 1}^2} \bigg) \nabla_1 \mathcal{W}_{ij} \nabla_1 \mathcal{W}_{rs} \\
& + \frac{\big( 2 \nabla_1 \alpha_0 e^{2 k v} + 4 k \alpha_0 e^{2 k v} \nabla_1 v \big) \sigma_{k - 1}^{ii} \nabla_1 \mathcal{W}_{ii} }{(n - 2)^k \sigma_{k - 1}^2} \\
\leq & - \frac{\alpha_0 e^{2 k v}}{(n - 2)^k \sigma_{k - 1}}  \frac{k}{k - 1} \frac{\sigma_{k - 1}^{ij} \sigma_{k - 1}^{rs}}{\sigma_{k - 1}^2}  \nabla_1 \mathcal{W}_{ij} \nabla_1 \mathcal{W}_{rs} \\
& + \frac{\big( 2 \nabla_1 \alpha_0 e^{2 k v} + 4 k \alpha_0 e^{2 k v} \nabla_1 v \big) \sigma_{k - 1}^{ii} \nabla_1 \mathcal{W}_{ii} }{(n - 2)^k \sigma_{k - 1}^2} \\
\leq & \frac{(k - 1) e^{2 k v} ( \nabla_1 \alpha_0 + 2 k \alpha_0 \nabla_1 v )^2}{k (n - 2)^k \alpha_0 \sigma_{k - 1}}.
\end{aligned} \]
By \eqref{eq4-4} and Newton-Maclaurin inequality,
\[\begin{aligned}
\frac{\alpha_0}{\sigma_{k - 1}} =  \frac{(n - 2)^k}{e^{2 k v}} \bigg( \frac{\sigma_k}{\sigma_{k - 1}} + \frac{\alpha e^{2 v}}{n - 2} \bigg)
\leq  C \Big( \sigma_{k - 1}^{\frac{1}{k - 1}} + 1 \Big) \leq C (\sigma_1 + 1) \leq C G.
\end{aligned} \]
Also we have
\[ \vert \nabla_{ij} v \vert \leq C G, \quad \forall \, i, j.  \]
Therefore, \eqref{eq4-21} reduces to
\[ F^{ii} \nabla_{11} \mathcal{W}_{ii} \geq - C G^2 .  \]
Combining this inequality with \eqref{eq4-20}, \eqref{eq4-17}, \eqref{eq4-18} and in view of \eqref{eq4-7} yields,
\begin{equation} \label{eq4-22}
\begin{aligned}
& F^{ii} \bigg( \nabla_{ii} \eta - (1 + \epsilon) ( \nabla_i \eta )^2 \bigg) + \frac{1}{n - 2} \bigg( \Delta \eta - (1 + \epsilon) |\nabla \eta|^2 \bigg) \sum F^{ii} \\
\leq & C \bigg( |\nabla \eta| + G + \frac{1}{\zeta^2} \bigg) \sum F^{ii}.
\end{aligned}
\end{equation}

Let
\[ \eta(w) = \bigg( 1 - \frac{3 w}{4 M} \bigg)^{- 1/2},  \]
where
\[ w = \frac{|\nabla v|^2}{2}, \quad M = \sup\limits_{\{\zeta > 0 \}} w. \]
Choosing $\epsilon = \frac{1}{2}$, we have
\[ \eta'' - (1 + \epsilon) \eta'^2  = \frac{9}{64 M^2} \bigg( 1 - \frac{3 w}{4 M} \bigg)^{- \frac{5}{2}} \Bigg( 3 - (1 + \epsilon) \bigg( 1 - \frac{3 w}{4 M} \bigg)^{- \frac{1}{2}}  \Bigg) \geq 0. \]
Therefore,
\begin{equation*}
\begin{aligned}
& F^{ii} \bigg( \nabla_{ii} \eta - (1 + \epsilon) ( \nabla_i \eta )^2 \bigg) + \frac{1}{n - 2} \bigg( \Delta \eta - (1 + \epsilon) |\nabla \eta|^2 \bigg) \sum F^{ii} \\
\geq & \eta' F^{ii} \big( \nabla_{iil} v \nabla_l v + ( \nabla_{il} v )^2 \big) + \frac{1}{n - 2} \eta' \bigg( \Delta \nabla_l v \nabla_l v + \sum\limits_{jl} ( \nabla_{jl} v )^2 \bigg) \sum F^{ii}.
\end{aligned}
\end{equation*}
By \eqref{eq4-7} and interchanging order of covariant derivatives,
\[  F^{ii} \nabla_{iil} v \nabla_l v + \frac{1}{n - 2} \Delta \nabla_l v \nabla_l v \sum F^{ii} \geq - C G \sum F^{ii}. \]
We note that
\[ \frac{3}{8 M} \leq \eta' \leq \frac{3}{M}, \quad |\nabla \eta| \leq C G,  \]
and
\[ \sum\limits_{j l} (\nabla_{jl} v)^2 \geq (\nabla_{11} v)^2 \geq G^2 - C G. \]
Hence \eqref{eq4-22} reduces to
\[  G^2 \leq C \Big( G + \frac{1}{\zeta^2} \Big). \]
Consequently,
\begin{equation} \label{eq4-34}
\zeta(x_0) G(x_0) \leq C.
\end{equation}

Finally, choosing $\zeta$ to be an appropriate cutoff function with support in $B_r$, we obtain the second order interior estimate for equation \eqref{eq4-3}.

\begin{thm} \label{second order interior estimate}
Let $v \in C^4(B_r)$ be a $(k - 1)$-admissible solution of \eqref{eq4-3} in a geodesic ball $B_r \subset M$ of radius $r > 0$. Then
\[ \sup\limits_{B_{\frac{r}{2}}} \vert \nabla^2 v \vert \leq C, \]
where $C$ depends on $r^{- 1}$, $n$, $k$, $\Vert v \Vert_{C^1(B_r)}$, $\Vert \alpha \Vert_{C^2(B_r)}$, $\Vert \alpha_0 \Vert_{C^2{(B_r)}}$ and $\inf\limits_{B_r} \alpha_0$.
\end{thm}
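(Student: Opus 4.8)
The estimate \eqref{eq4-34}, $\zeta\nabla_{11}v \le C$ at the point $x_0$ where $\tilde{\Psi} = \zeta e^{\eta(w)}\nabla_{11}v$ attains its maximum, has already been established above; the plan is to specialize the cutoff $\zeta$ and then upgrade the resulting one-sided control of $\nabla^2 v$ to a genuine Hessian bound. First I would fix a standard cutoff $\zeta$ with $\zeta \equiv 1$ on $B_{r/2}$, $\operatorname{supp}\zeta \subset B_r$, $|\nabla\zeta|^2 \le C r^{-2}\zeta$ and $|\nabla^2\zeta| \le C r^{-2}$, so that the structural conditions $0 \le \zeta \le 1$, $|\nabla\zeta| \le b_0$, $|\nabla^2\zeta| \le b_0$ used above hold with $b_0$ comparable to $r^{-2}$. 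Since $w = \tfrac{1}{2}|\nabla v|^2 \le M$ on $\{\zeta > 0\}$, one has $1 \le e^{\eta(w)} \le e^{2}$ there, so \eqref{eq4-34} forces $\tilde{\Psi}(x_0) \le C$ and hence $\tilde{\Psi} \le C$ throughout $B_r$; restricting to $B_{r/2}$, where $\zeta \equiv 1$ and $e^{\eta(w)} \ge e$, this yields an upper bound $\nabla^2 v \le C\,I$, with $C$ depending on $r^{-1}$, $n$, $k$, $\|v\|_{C^1(B_r)}$, $\|\alpha\|_{C^2(B_r)}$, $\|\alpha_0\|_{C^2(B_r)}$ and $\inf_{B_r}\alpha_0$: the dependence on the $C^2$ norms of $\alpha$ and $\alpha_0$ comes from differentiating \eqref{eq4-4} twice in \eqref{eq4-21}, and that on $\inf_{B_r}\alpha_0$ from the $\alpha_0^{-1}$ term produced by completing the square in the concavity step of \eqref{eq4-21}.

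It remains to bound $\nabla^2 v$ from below, and here I would use $(k-1)$-admissibility. Since $\lambda(\mathcal{W}[v]) \in \Gamma_{k-1} \subset \Gamma_1$ we have $\operatorname{tr}\mathcal{W}[v] > 0$; expanding $\mathcal{W}_{ij}[v]$ this reads $\tfrac{2(n-1)}{n-2}\Delta v + (n-1)|\nabla v|^2 - \tfrac{1}{n-2}S_g > 0$, which gives a lower bound $\Delta v \ge -C$ depending only on $\|v\|_{C^1(B_r)}$ and the curvature of $g$. Combining this with $\nabla^2 v \le C\,I$ on $B_{r/2}$, every eigenvalue $\lambda_i$ of $\nabla^2 v$ on $B_{r/2}$ satisfies $\lambda_i = \Delta v - \sum_{j \ne i}\lambda_j \ge -C - (n-1)C$, so $|\nabla^2 v| \le C$ on $B_{r/2}$, which is the assertion of the theorem.

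The genuine difficulty is not in these two steps --- which are routine --- but in the a priori inequality \eqref{eq4-34} preceding the statement: controlling the bad third-order term $F^{ii}\nabla_{ii11}v$ by differentiating \eqref{eq4-4} twice and exploiting the concavity of $\sigma_k/\sigma_{k-1}$ and of $\sigma_{k-1}^{1/(k-1)}$ as in \eqref{eq4-21}, then absorbing the extra terms generated by the $\alpha_0 e^{2kv}\sigma_{k-1}^{-1}$ piece --- which are absent in the pure $\sigma_k$ case --- via the Newton--Maclaurin bound $\alpha_0/\sigma_{k-1} \le C\nabla_{11}v$, after which the choice $\eta(w) = (1 - \tfrac{3w}{4M})^{-1/2}$ with $\epsilon = \tfrac{1}{2}$ makes $\eta'' - (1+\epsilon)\eta'^2 \ge 0$ and closes the argument. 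The only additional care needed in passing to the theorem is to note that the maximum of the test function is attained at an interior point of $B_r$ since $\zeta$ vanishes near $\partial B_r$; beyond the work already carried out I do not anticipate any real obstacle.
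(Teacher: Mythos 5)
Your proposal is correct and follows the paper's own route: the entire content of the theorem is the estimate \eqref{eq4-34}, and the proof consists of specializing $\zeta$ to a cutoff equal to $1$ on $B_{r/2}$ and supported in $B_r$, exactly as you do. Your explicit completion of the argument --- upgrading the one-sided bound $\nabla_{\xi\xi}v \le C$ to $|\nabla^2 v|\le C$ via $\operatorname{tr}\mathcal{W}[v]>0$ from $\Gamma_{k-1}\subset\Gamma_1$ --- is the standard step the paper leaves implicit, and your accounting of where the dependence on $\|\alpha_0\|_{C^2}$ and $\inf\alpha_0$ enters is accurate.
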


If we choose $\zeta \equiv 1$ in \eqref{eq4-34}, we obtain the global second order estimate.

\begin{thm} \label{global second order estimate}
Let $v \in C^4(M) \cap C^2 (\overline{M})$ be a $(k - 1)$-admissible solution of \eqref{eq4-3}. Then
\[ \max\limits_{\overline{M}} \vert \nabla^2 v \vert \leq C,  \]
where $C$ depends on $n$, $k$, $\Vert v \Vert_{C^1 (\overline{M})}$, $\max\limits_{\partial M} \vert \nabla^2 v \vert$, $\Vert \alpha \Vert_{C^2(M)}$, $\Vert \alpha_0 \Vert_{C^2{(M)}}$ and $\inf\limits_{M} \alpha_0$.
\end{thm}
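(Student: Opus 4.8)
The plan is to specialize the differential-inequality argument already used for the interior estimate (Theorem \ref{second order interior estimate}) to the cutoff $\zeta\equiv 1$, and then to upgrade the resulting one-sided control on $\nabla^2 v$ to a genuine two-sided bound by exploiting $(k-1)$-admissibility.

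First I would set $w=\tfrac12|\nabla v|^2$ and $M=\sup_{\overline M}w$, which is finite by the hypothesis $\|v\|_{C^1(\overline M)}<\infty$ (equivalently, by Theorem \ref{global gradient estimate}), keep the auxiliary function $\eta$ defined above, and consider $\Psi(x)=e^{\eta(w(x))}\Lambda(x)$ on $\overline M$, where $\Lambda(x)$ denotes the largest eigenvalue of $\nabla^2 v(x)$. Let $x_0$ be a point where $\Psi$ attains its maximum on $\overline M$, and let $\xi\in T_{x_0}\overline M$ be a corresponding unit eigenvector. If $x_0$ lies in the interior $M$ and $\Lambda(x_0)\geq 1$, then choosing a frame about $x_0$ with $e_1=\xi$ diagonalizing $\mathcal W[v](x_0)$ and running verbatim through the computation that produced \eqref{eq4-34} — now with $\zeta\equiv 1$, so that every term carrying $\zeta^{-1}$, $\zeta^{-2}$, $\nabla\zeta$ or $\nabla^2\zeta$ disappears — yields $\Lambda(x_0)=\nabla_{11}v(x_0)\leq C$. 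Since $e\leq e^{\eta(w)}\leq e^{2}$ on $\overline M$, this forces $\Lambda\leq C$ everywhere on $\overline M$. If instead $x_0\in\partial M$ (or $\Lambda(x_0)<1$ at an interior maximum), then $\Psi\leq e^{2}\max_{\partial M}|\nabla^2 v|$ throughout $\overline M$, so again $\Lambda\leq C$, with $C$ now also depending on $\max_{\partial M}|\nabla^2 v|$. In every case $\nabla^2 v\leq C\,I$ on $\overline M$.

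It then remains to bound $\nabla^2 v$ from below. Since $v$ is $(k-1)$-admissible we have $\lambda(\mathcal W[v])\in\Gamma_{k-1}\subset\Gamma_1$, hence $\sigma_1(\mathcal W[v])>0$; writing this out gives $\tfrac{2(n-1)}{n-2}\Delta v+(n-1)|\nabla v|^2-\tfrac{S_g}{n-2}>0$, and together with the $C^1$ bound on $v$ and the boundedness of $S_g$ this yields $\Delta v\geq -C$. Combining $\Delta v\geq -C$ with $\nabla^2 v\leq C\,I$ and ordering the eigenvalues $\mu_1\geq\cdots\geq\mu_n$ of $\nabla^2 v$, one has $\mu_n=\Delta v-\sum_{i=1}^{n-1}\mu_i\geq -C-(n-1)C$, so $|\nabla^2 v|\leq C$ on $\overline M$, which is the assertion.

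The one genuinely new ingredient compared with the interior estimate is the bookkeeping needed when the test function is maximized on $\partial M$; once the weight $e^{\eta(w)}$ has been pinned between two positive constants this case is immediate, and the passage from a one-sided Hessian bound to a two-sided one via admissibility is routine. Consequently no substantial new obstacle appears in this statement: the real analytic work was already invested in deriving \eqref{eq4-34}, in particular the careful treatment of the $\alpha$, $\alpha_0$ terms and the concavity inequalities for $\sigma_k/\sigma_{k-1}$ and $\sigma_{k-1}^{1/(k-1)}$.
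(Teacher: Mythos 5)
Your proposal is correct and follows essentially the same route as the paper, which simply takes $\zeta\equiv 1$ in \eqref{eq4-34}; you have merely made explicit the two steps the paper leaves implicit, namely the case where the test function is maximized on $\partial M$ (absorbed into the dependence on $\max_{\partial M}|\nabla^2 v|$) and the upgrade from the one-sided bound $\nabla^2 v\leq C\,I$ to a full bound via $\sigma_1(\mathcal W[v])>0$.
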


\vspace{2mm}

\subsection{Boundary estimates for second derivatives}

\vspace{2mm}

\begin{thm} \label{second order boundary estimate}
Let $v \in C^3(\overline{M})$ be a $(k - 1)$-admissible solution of \eqref{eq4-3} satisfying
\[ v = \varphi \quad \text{  on  } \partial M,   \]
where $\varphi \in C^{\infty} (\overline{M})$. Then we have the estimate
\[ |\nabla^2 v| \leq C \quad \text{  on  }  \partial M, \]
where $C > 0$ is a constant depending on $n$, $k$, $\Vert v \Vert_{C^1(\overline{M})}$, $\Vert \varphi \Vert_{C^3(\overline{M})}$, $\Vert \alpha \Vert_{C^1(\overline{M})}$, $\Vert \alpha_0 \Vert_{C^1(\overline{M})}$ and $\min\limits_{\overline{M}} \alpha_0$.
\end{thm}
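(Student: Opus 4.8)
The plan is to fix an arbitrary point $p\in\partial M$ and bound $|\nabla^2 v(p)|$ by treating separately the three types of second derivatives in a local orthonormal frame $e_1,\dots,e_n$ near $p$, with $e_n$ the interior unit normal to $\partial M$ and $e_1,\dots,e_{n-1}$ tangent to $\partial M$ at $p$. Throughout I will use the $C^1$ bound on $v$ (Theorem \ref{global gradient estimate}), the inequality $\sum F^{ii}\ge\frac{n-k+1}{k}$ from the gradient estimate, and the identity $F^{ij}\mathcal{W}_{ij}=k\,\sigma_k/\sigma_{k-1}+(k-1)\frac{\alpha}{n-2}e^{2v}$, which follows by pairing the computed expression for $F^{ij}\mathcal{W}_{ij}$ with \eqref{eq4-4}. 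The pure tangential second derivatives are immediate: differentiating the boundary condition $v=\varphi$ twice along $\partial M$ expresses $\nabla_{\alpha\beta}v(p)$ for $\alpha,\beta<n$ through the intrinsic Hessian of $\varphi$ on $\partial M$, the second fundamental form of $\partial M$ and $\nabla_n v(p)$, all controlled by $\|\varphi\|_{C^2(\overline{M})}$, the geometry of $\partial M$ and $\|v\|_{C^1(\overline{M})}$; hence $|\nabla_{\alpha\beta}v(p)|\le C$ for $\alpha,\beta<n$.

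For the mixed tangential--normal derivatives I would extend a tangential $e_\alpha$ to a vector field $T$ near $p$, differentiate \eqref{eq4-4} once in the direction $T$, and organize the third-order terms into $\mathcal{L}(Tv)$, where $\mathcal{L}$ is the linearization of \eqref{eq4-4} --- uniformly elliptic from below, with first-order coefficients bounded by $C\sum F^{ii}$. Using the $C^1$ bound, $\|\alpha\|_{C^1}$, $\|\alpha_0\|_{C^1}$ and Newton--Maclaurin, this yields a differential inequality for $T(v-\varphi)$ on a half-ball $M\cap B_\delta(p)$, on whose boundary $T(v-\varphi)$ vanishes along $\partial M$ and is bounded on the spherical part. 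One then closes the estimate against a barrier $\Phi\ge 0$ on $\partial(M\cap B_\delta(p))$, vanishing at $p$, with $\mathcal{L}\Phi$ sufficiently negative relative to $1+\sum F^{ii}$; following Guan \cite{Guan08} and Guan \cite{GuanP02}, $\Phi$ is built from $|x-p|^2$ (whose image under $\mathcal{L}$ is a positive multiple of $\sum F^{ii}$), the distance function to $\partial M$ (whose ambient Hessian is bounded near $\partial M$), and an auxiliary term producing strict negativity, the coefficients being tuned using $\sum F^{ii}\ge\frac{n-k+1}{k}$ and the above identity; if necessary a strict $(k-1)$-admissible subsolution of the Dirichlet problem is incorporated. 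The comparison principle then gives $|T(v-\varphi)|\le\Phi$ on $M\cap B_\delta(p)$, and differentiating this in the normal direction at $p$ yields $|\nabla_{\alpha n}v(p)|\le C$ for $\alpha<n$.

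For the double--normal derivative $\Lambda:=\nabla_{nn}v(p)$, the key is that $\mathcal{W}[v]$ contains the term $\frac{\Delta v}{n-2}g$, so both a large and a very negative $\Lambda$ are obstructed. A lower bound is elementary: $(k-1)$-admissibility forces $\sigma_1(\mathcal{W})=\operatorname{tr}\mathcal{W}=\frac{2(n-1)}{n-2}\Delta v+(n-1)|\nabla v|^2-\frac{S_g}{n-2}>0$, so $\Delta v(p)\ge -C$ and hence $\Lambda=\Delta v(p)-\sum_{\alpha<n}\nabla_{\alpha\alpha}v(p)\ge -C$ by the first two steps. For the upper bound, if $\Lambda$ were large then --- all entries of $\nabla^2 v(p)$ other than $\nabla_{nn}v$ being bounded --- one has $\mathcal{W}(p)=\frac{\Lambda}{n-2}\operatorname{diag}(1,\dots,1,n-1)+O(1)$, so $\lambda(\mathcal{W}(p))=\frac{\Lambda}{n-2}(1,\dots,1,n-1)+O(1)\in\Gamma_k$, whence $\sigma_k/\sigma_{k-1}\sim c_{n,k}\Lambda$ while $\frac{\alpha_0 e^{2kv}}{(n-2)^k\sigma_{k-1}}=O(\Lambda^{-(k-1)})\to 0$; thus $F(\mathcal{W}(p))\to+\infty$, contradicting $F(\mathcal{W}(p))=-\frac{\alpha(p)}{n-2}e^{2v(p)}$, which is bounded by the data. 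Quantitatively, holding the bounded entries of $\nabla^2 v(p)$ fixed and viewing $F(\mathcal{W}(p))$ as a function of $\Lambda$, its derivative equals $F^{nn}+\frac{1}{n-2}\sum F^{ii}>0$, and the matrices along this path remain $(k-1)$-admissible (the $(k-1)$-admissible symmetric matrices form a convex cone containing the positive-definite ones, hence stable under adding positive definite matrices), so $F(\mathcal{W}(p))$ is strictly increasing in $\Lambda$ and tends to $+\infty$; its prescribed value therefore bounds $\Lambda$ from above in terms of $n$, $k$, $\|v\|_{C^1}$, $\|\varphi\|_{C^3}$, $\|\alpha\|_{C^1}$, $\|\alpha_0\|_{C^1}$ and $\min_{\overline{M}}\alpha_0$. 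Combining the three cases gives $|\nabla^2 v|\le C$ on $\partial M$.

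I expect the main obstacle to be the barrier construction in the mixed estimate: $\mathcal{L}$ is not uniformly elliptic from above, and $F^{ij}\mathcal{W}_{ij}$ is controlled only by $\operatorname{tr}\mathcal{W}$ (which is not a priori bounded), so producing strict negativity of $\mathcal{L}\Phi$ of the needed size --- without any a priori second-order bound, and keeping the dependence on $\min_{\overline{M}}\alpha_0$ transparent --- is the delicate point, and is precisely where the techniques of Guan \cite{Guan08} and Guan \cite{GuanP02} must be combined. By contrast the double--normal estimate, usually the crux for Hessian-type equations, is comparatively soft here because $\mathcal{W}[v]$ depends on $v$ through its Laplacian, so a large $\nabla_{nn}v$ inflates the whole spectrum of $\mathcal{W}$ rather than a single eigenvalue.
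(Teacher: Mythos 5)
Your decomposition into pure tangential, tangential--normal, and double--normal derivatives is the same as the paper's, and two of the three parts are sound: the pure tangential bound is immediate, and your double--normal argument (lower bound from $\sigma_1(\mathcal{W})>0$, upper bound from the fact that a large $\nabla_{nn}v$ inflates the whole spectrum of $\mathcal{W}$ through the $\frac{\Delta v}{n-2}\,g$ term, making $\sigma_k$ grow one order faster than $\sigma_{k-1}$) matches the paper's and is correct.

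The gap is exactly where you say you expect it, in the tangential--normal estimate, and your proposed barrier does not close it. Differentiating the equation gives, as in the paper's \eqref{eq4-25},
\[
\mathcal{L}\bigl(\pm\nabla_t(v-\varphi)\bigr)\;\leq\; C\Bigl(1+\textstyle\sqrt{\sum_{jl}(\nabla_{jl}v)^2}\Bigr)\sum F^{ii},
\]
with the square root of the \emph{full, a priori unbounded} Hessian on the right (it enters through the commutator terms $\Gamma^l_{it}\nabla_{jl}v$ and through $\nabla_t|\nabla v|^2$). A barrier built from $\rho$, $\rho^2$, $|x-p|^2$ and a subsolution produces $\mathcal{L}\Phi\leq -\beta\sum F^{ii}$ with $\beta$ a \emph{constant}, which cannot dominate a right-hand side growing linearly in $|\nabla^2 v|$ unless $\beta$ depends on $\sup|\nabla^2 v|$ --- circular. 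The paper's fix, which is the one missing ingredient, is to include the quadratic gradient term $-\sum_{s<n}|\nabla_s(v-\varphi)|^2$ in the barrier: by \eqref{eq4-26} its $\mathcal{L}$-image contributes the good term $\frac{1}{n-2}\sum_l\sum_{s<n}(\nabla_{ls}v)^2\,\sum F^{ii}$, \emph{quadratic} in the mixed/tangential second derivatives. To let this absorb the bad term one also needs the intermediate observation that the remaining entry $\nabla_{nn}v$ is itself controlled by $C\bigl(1+\sqrt{\sum_l\sum_{s<n}(\nabla_{ls}v)^2}\bigr)$ --- from $\sigma_1(\mathcal{W}[v])\geq 0$ when $\nabla_{nn}v\leq 0$, and from the concavity inequality $\sum F^{ij}(\delta_{ij}-\mathcal{W}_{ij})\geq -C$ when $\nabla_{nn}v\geq 0$ --- so that the full Hessian norm in the bad term is bounded by the quantity the good term is quadratic in. With these two additions the dichotomy "small vs.\ large $\sqrt{\sum_l\sum_{s<n}(\nabla_{ls}v)^2}$" closes the comparison argument; without them the mixed estimate does not go through.
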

\begin{proof}
Let $x_0$ be an arbitrary point on $\partial M$. Choose a smooth local orthonormal frame $e_1, \ldots, e_n$ around $x_0$ such that $e_n$ is the interior unit normal vector field to $\partial M$ along $\partial M$.  We obtain the pure tangential second derivative bound immediately
\begin{equation} \label{eq4-28}
\big\vert \nabla_{s t} v (x_0) \big\vert \leq C, \quad \forall \, s, t < n.
\end{equation}

For the tangential-normal second derivative estimate
\begin{equation} \label{eq4-27}
\big\vert \nabla_{s n} v (x_0) \big\vert \leq C, \quad \forall \, s < n,
\end{equation}
we use the function
\[ \Theta = \beta \bigg( \rho - \frac{N}{2} \rho^2 \bigg), \]
where $\beta$ and $N$ are positive constants, and $\rho(x)$ is the distance from $x$ to $\partial M$.
Set
\[ M_{\delta} = \{ x \in M \, \vert \, \rho(x) < \delta \}. \]
Choose $\delta > 0$ sufficiently small such that $\rho(x)$ is smooth in $M_{\delta}$, on which, we have
\[ \vert \nabla \rho \vert = 1, \quad \vert \nabla^2 \rho \vert \leq C. \]
We consider the linearized operator $\mathcal{L}$ which is locally defined by
\[ \mathcal{L} \eta = F^{ij} \big( \nabla_{ij} \eta - 2 \nabla_i v \nabla_j \eta \big) + \bigg( \frac{\Delta \eta}{n - 2} + 2 \nabla_m v \nabla_m \eta \bigg) \sum F^{ii}.  \]
It can be seen that
\[ \begin{aligned}
\mathcal{L} \Theta \leq \beta \bigg( C (1 + N \delta) - \frac{N}{n - 2} \bigg) \sum F^{ii}.
\end{aligned} \]
Choosing
\[ N \geq (n - 2) (2 C + 1), \quad \text{ and } \quad \delta \leq \frac{1}{N}, \]
we have
\begin{equation} \label{eq4-23}
\mathcal{L} \Theta \leq - \beta \sum F^{ii}  \quad \text{  in  } M_{\delta}
\end{equation}
and
\begin{equation} \label{eq4-24}
\Theta \geq \frac{\beta}{2} \rho \quad \text{  in  } M_{\delta}.
\end{equation}

We note that
\[ \nabla_{ij} (\nabla_k v) = \nabla_{kij} v + \Gamma_{ik}^l \nabla_{jl} v + \Gamma_{jk}^l \nabla_{il} v + \nabla_k \Gamma_{ij}^l \nabla_l v. \]
Therefore, for $t < n$,
\[\begin{aligned}
& \mathcal{L} \big( \nabla_{t} (v - \varphi) \big) \\
= & F^{ij} \Big( \nabla_{t ij}(v - \varphi) + \Gamma_{it}^l \nabla_{jl} (v - \varphi) + \Gamma_{jt}^l \nabla_{il}(v - \varphi) + \nabla_{t} \Gamma_{ij}^l \nabla_l (v - \varphi) \Big) \\
& - 2 F^{ij} \nabla_i v \Big( \nabla_{jt}(v - \varphi) + \Gamma_{jt}^l \nabla_l (v - \varphi)  \Big) \\
& + \Bigg( \frac{\nabla_{t} \Delta (v - \varphi) + 2 \sum_j \Gamma_{jt}^l \nabla_{jl} (v - \varphi) + \sum_j \nabla_{t} \Gamma_{jj}^l \nabla_l (v - \varphi)}{n - 2} \\
& + 2 \nabla_m v \Big( \nabla_{m t}(v - \varphi) + \Gamma_{mt}^l \nabla_l (v - \varphi) \Big) \Bigg) \sum F^{ii}.
\end{aligned} \]
By \eqref{eq4-7}, we have
\[ \begin{aligned}
& F^{ij} \Big( \nabla_{t ij} v - 2 \nabla_i v \nabla_{tj} v \Big) + \Bigg( \frac{\nabla_t \Delta v}{n - 2} + \nabla_t |\nabla v|^2 \Bigg) \sum F^{ii} \\
\leq  & C \sum F^{ii} + \frac{C \alpha_0}{\sigma_{k - 1}}
\leq  C \sum F^{ii} + C \big( \sigma_1 + 1 \big)
\leq  C \sum F^{ii} + C \sigma_1.
\end{aligned} \]
Consequently, for $t < n$,
\begin{equation} \label{eq4-25}
\begin{aligned}
& \mathcal{L} \big( \nabla_{t} (v - \varphi) \big) \\
\leq & C \sum F^{ii} + C \sigma_1 + C \sqrt{\sum_{jl} (\nabla_{jl} v)^2} \sum F^{ii} \\
\leq & C \Bigg( 1 + \sqrt{\sum_{jl} (\nabla_{jl} v)^2} \Bigg) \sum F^{ii}.
\end{aligned}
\end{equation}
Similarly, we may verify that
\begin{equation} \label{eq4-25-1}
\begin{aligned}
 \mathcal{L} \big( - \nabla_{t} (v - \varphi) \big)
\leq  C \Bigg( 1 + \sqrt{\sum_{jl} (\nabla_{jl} v)^2} \Bigg) \sum F^{ii}.
\end{aligned}
\end{equation}
We also have
\begin{equation} \label{eq4-26}
\begin{aligned}
& \mathcal{L} \Bigg( \sum\limits_{s < n} \big| \nabla_s (v - \varphi) \big|^2 \Bigg)
=  \sum\limits_{s < n} 2 \nabla_s (v - \varphi) \mathcal{L} \big( \nabla_s (v - \varphi) \big) \\
& + \sum\limits_{s < n} 2 F^{ij} \nabla_i \big( \nabla_s (v - \varphi) \big) \nabla_j \big( \nabla_s (v - \varphi) \big)
 + \sum\limits_{s < n} \frac{2 \Big| \nabla \big( \nabla_s (v - \varphi) \big) \Big|^2}{n - 2} \sum F^{ii} \\
\geq & - C \Bigg( 1 + \sqrt{\sum_{jl} (\nabla_{jl} v)^2} \Bigg) \sum F^{ii} + 0
 + \frac{\sum\limits_l \sum\limits_{s < n} (\nabla_{ls} v)^2}{n - 2} \sum F^{ii} - C \sum F^{ii}.
\end{aligned}
\end{equation}

We observe that if $\nabla_{nn} v \leq 0$, by the fact that
\[ 0 \leq \sigma_1 \big( \mathcal{W}[v] \big) \leq \frac{2(n - 1)}{n - 2} \Delta v + C, \]
we have
\[ 0 \geq \nabla_{nn} v \geq - \sum\limits_{s < n} \nabla_{ss} v - C.   \]

If $\nabla_{nn} v \geq 0$, by the concavity of $F$ with respect to $\{\mathcal{W}_{ij}\}$, that is,
\[ \sum\limits_{ij} F^{ij} \big( \delta_{ij} - \mathcal{W}_{ij} \big) \geq F \big( I \big) - F \big( \mathcal{W}[v] \big) \geq - C, \]
we have
\[ \begin{aligned}
& \sum F^{ii} + C \geq \sum\limits_{ij} F^{ij} \mathcal{W}_{ij} \\
\geq & \sum\limits_{ij} F^{ij}  \bigg( \nabla_{ij} v + \frac{\Delta v}{n - 2} \delta_{ij} \bigg) - C \sum F^{ii} \\
= & \sum\limits_{i} \sum\limits_{s < n} F^{is} \nabla_{is} v + \sum\limits_{s < n} F^{s n} \nabla_{s n} v + F^{nn} \nabla_{nn} v + \frac{\Delta v}{n - 2} \sum F^{ii} - C \sum F^{ii} \\
\geq & - C \sqrt{\sum_{l} \sum_{s < n} (\nabla_{ls} v)^2} \sum F^{ii} + \frac{\Delta v}{n - 2} \sum F^{ii} - C \sum F^{ii},
\end{aligned} \]
which implies that
\[ 0 \leq \nabla_{nn} v \leq C + C \sqrt{\sum_{l} \sum_{s < n} (\nabla_{ls} v)^2}. \]

By the above observation, \eqref{eq4-23}, \eqref{eq4-25}, \eqref{eq4-25-1} and \eqref{eq4-26}, we have
\begin{equation*}
\begin{aligned}
& \mathcal{L} \Bigg( \Theta - \sum\limits_{s < n} \big| \nabla_s (v - \varphi) \big|^2 \pm \nabla_t (v - \varphi) \Bigg) \\
\leq & - \beta \sum F^{ii} + C \sqrt{\sum\limits_l \sum\limits_{s < n} (\nabla_{ls} v)^2} \sum F^{ii}
- \frac{\sum\limits_l \sum\limits_{s < n} (\nabla_{ls} v)^2}{n - 2} \sum F^{ii} + C \sum F^{ii}.
\end{aligned}
\end{equation*}

If
\[ \sqrt{\sum\limits_l \sum\limits_{s < n} (\nabla_{ls} v)^2} \leq C (n - 2), \]
then we obtain
\[  \mathcal{L} \Bigg( \Theta - \sum\limits_{s < n} \big| \nabla_s (v - \varphi) \big|^2 \pm \nabla_t (v - \varphi) \Bigg)
\leq  - \beta \sum F^{ii} + C^2 (n - 2) \sum F^{ii} + C \sum F^{ii}. \]

If
\[ \sqrt{\sum\limits_l \sum\limits_{s < n} (\nabla_{ls} v)^2} \geq C (n - 2), \]
then we obtain
\[  \mathcal{L} \Bigg( \Theta - \sum\limits_{s < n} \big| \nabla_s (v - \varphi) \big|^2 \pm \nabla_t (v - \varphi) \Bigg)
\leq - \beta \sum F^{ii} + C \sum F^{ii}. \]
Thus, choosing $\beta$ sufficiently large, we arrive at
\[  \mathcal{L} \Bigg( \Theta - \sum\limits_{s < n} \big| \nabla_s (v - \varphi) \big|^2 \pm \nabla_t (v - \varphi) \Bigg) \leq 0. \]
Also,
\[  \Theta - \sum\limits_{s < n} \big| \nabla_s (v - \varphi) \big|^2 \pm \nabla_t (v - \varphi) = 0 \quad \text{  on  } \partial M,  \]
and by \eqref{eq4-24},  we may choose $\beta$ further large depending on $\delta$ such that
\[  \Theta - \sum\limits_{s < n} \big| \nabla_s (v - \varphi) \big|^2 \pm \nabla_t (v - \varphi) \geq \frac{\beta}{2} \delta - C \geq 0 \quad \text{  on  }  \{ x \in M \,| \, \rho(x) = \delta \}.  \]
By the maximum principle,
\[  \Theta - \sum\limits_{s < n} \big| \nabla_s (v - \varphi) \big|^2 \pm \nabla_t (v - \varphi) \geq 0 \quad \text{  in  }  M_{\delta}.  \]
Hence we obtain \eqref{eq4-27}.

Next, we derive the double normal second derivative estimate
\begin{equation} \label{eq4-29}
\nabla_{n n} v (x_0)  \leq C.
\end{equation}
By \eqref{eq4-3}, that is,
\[ \sigma_k \big( \mathcal{W} [ v ] \big) + \frac{\alpha e^{2 v}}{n - 2} \sigma_{k - 1} \big( \mathcal{W} [ v ]  \big) = \frac{\alpha_0 e^{2 k v}}{(n - 2)^k}, \]
if $\nabla_{n n} v (x_0)$ is sufficiently large, in view of \eqref{eq4-28} and \eqref{eq4-27}, we have at $x_0$,
\[ \sigma_k \bigg( \frac{\Delta v}{n - 2} \delta_{ij} - C \delta_{ij} \bigg) - C \sigma_{k - 1} \bigg( \frac{\Delta v}{n - 2} \delta_{ij} + \nabla_{nn} v \delta_{ij} + C \delta_{ij} \bigg) \leq C, \]
which further implies that
\[ \bigg( \frac{1}{n - 2} \nabla_{nn} v - C \bigg)^k \sigma_k (I) - C \bigg( \frac{n - 1}{n - 2} \nabla_{nn} v + C \bigg)^{k - 1} \sigma_{k - 1} (I) \leq C. \]
Hence we obtain \eqref{eq4-29}.

\end{proof}

\begin{rem}
When deriving the estimates, we can also directly obtain
\[ \frac{1}{\sigma_{k - 1} \big( \mathcal{W} [ v ] \big) } \leq C \]
by \eqref{eq4-3}. In fact, we have
\[ \begin{aligned}
\inf\alpha_0 \leq & \alpha_0(x) = \sigma_k \bigg( (n - 2) e^{- 2 v}  \mathcal{W} [ v ] \bigg) + \alpha(x) \sigma_{k - 1} \bigg( (n - 2) e^{- 2 v}  \mathcal{W} [ v ]  \bigg) \\
\leq &  \sigma_{k - 1}^{\frac{k}{k - 1}} \bigg( (n - 2) e^{- 2 v}  \mathcal{W} [ v ] \bigg) + \sup\alpha \sigma_{k - 1} \bigg( (n - 2) e^{- 2 v}  \mathcal{W} [ v ]  \bigg).
\end{aligned} \]
\end{rem}

\vspace{4mm}

Next, we shall use continuity method and degree theory to prove the existence of a smooth $(k - 1)$-admissible solution to the Dirichlet problem
\begin{equation} \label{eq4-30}
\left\{\begin{aligned}
F \big( \mathcal{W} [ v ] \big) = & - \frac{\alpha(x)}{n - 2} e^{2 v} \quad &\text{  in  } \Omega, \\
v = & \varphi \quad &\text{  on  } \partial\Omega,
\end{aligned} \right.
\end{equation}
where $\Omega$ is a bounded domain in $\mathbb{R}^n$ with smooth compact boundary which are composed of closed hypersurfaces.

\vspace{2mm}

\subsection{Existence of subsolutions}

\vspace{2mm}

In this subsection, we construct a subsolution synthesizing the ideas of Guan \cite{Guan08} and Guan \cite{GuanP02}.
We note that there exist sufficiently large $R > r > 0$ such that $\Omega \subset B_r (0)$, and
\[ w (x) = - \ln \big( R^2 - |x|^2 \big) - C \]
satisfies
\[ \mathcal{W} [ w ] \geq \nabla^2 w + \frac{\Delta w}{n - 2} g \geq \frac{4 R^2}{(n - 2) \big( R^2 - |x|^2 \big)^2 } g. \]
It follows that
\[  (n - 2) e^{- 2 w}  \mathcal{W} [ w ] \geq e^{2 C} 4 R^2 g >  e^{2 C} 2 R^2 g . \]
Choosing $C > 0$ sufficiently large such that
\begin{equation} \label{eq4-31}
\begin{aligned}
& \frac{\sigma_k}{\sigma_{k - 1}} \bigg( (n - 2) e^{- 2 w}  \mathcal{W} [ w ] \bigg) - \frac{\alpha_0(x) }{\sigma_{k - 1} \bigg( (n - 2) e^{- 2 w}  \mathcal{W} [ w ]  \bigg)} \\
\geq &  \frac{\sigma_k}{\sigma_{k - 1}} \bigg(   e^{2 C} 2 R^2 g  \bigg) - \frac{\alpha_0(x) }{\sigma_{k - 1} \bigg(   e^{2 C} 2 R^2 g  \bigg)}
> - \alpha(x) \quad \text{  on  } \overline{\Omega}
\end{aligned}
\end{equation}
and
\[ w \leq - \ln \big( R^2 - r^2 \big) - C < \varphi \quad \text{  on  } \partial \Omega. \]

Also, we consider
\[ \eta = 2 \ln \delta - \ln (\rho + \delta^2) + \varphi.  \]
For $\delta > 0$ sufficiently small,
\[ \mathcal{W}[ \eta ] \geq \nabla^2 \eta  +  \frac{\Delta \eta}{n - 2} g  \geq \frac{1}{2(n - 2) (\rho + \delta^2)^2} g \quad \text{  on  } \{ 0 \leq \rho \leq \delta \}.  \]
Consequently,
\[  (n - 2) e^{- 2 \eta}  \mathcal{W}[\eta ] \geq \frac{e^{- 2 \varphi}}{2 \delta^4} g >  \frac{e^{- 2 \varphi}}{4 \delta^4} g \quad \text{  on  } \{ 0 \leq \rho \leq \delta \}.  \]
Choosing $\delta > 0$ further small such that
\begin{equation} \label{eq4-32}
\begin{aligned}
& \frac{\sigma_k}{\sigma_{k - 1}} \bigg( (n - 2) e^{- 2 \eta}  \mathcal{W} [ \eta ] \bigg) - \frac{\alpha_0(x)}{\sigma_{k - 1} \bigg( (n - 2) e^{- 2 \eta}  \mathcal{W} [ \eta ]  \bigg)} \\
\geq &  \frac{\sigma_k}{\sigma_{k - 1}} \bigg(   \frac{e^{- 2 \varphi}}{4 \delta^4} g  \bigg) - \frac{\alpha_0(x)}{\sigma_{k - 1} \bigg(   \frac{e^{- 2 \varphi}}{4 \delta^4} g  \bigg)} > - \alpha(x) \quad \text{  on  } \{ 0 \leq \rho \leq \delta \},
\end{aligned}
\end{equation}
and
\[ \eta < 2 \ln \delta - \ln \delta + \varphi < w  \quad \text{  on  } \rho = \delta. \]

Now we need a lemma from Guan \cite{GuanP02}.
\begin{lemma}
For all $\epsilon > 0$, there is an even function $h(t) \in C^{\infty} (\mathbb{R})$ such that
\begin{enumerate}
  \item $h(t) \geq |t|$ for all $t \in \mathbb{R}$, and $h(t) = |t|$ for all $|t| \geq \epsilon$;
  \item $\big| h'(t) \big| \leq 1$ and $h''(t) \geq 0$ for all $t \in \mathbb{R}$, and $h'(t) \geq 0$ for all $t \geq 0$.
\end{enumerate}
\end{lemma}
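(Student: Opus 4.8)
The plan is to produce $h$ as a mollification of the function $t \mapsto |t|$ by a smooth even bump supported in $(-\epsilon,\epsilon)$. Concretely, I would fix $\phi \in C_c^{\infty}(\mathbb{R})$ with $\operatorname{supp}\phi \subset (-\epsilon,\epsilon)$, $\phi \geq 0$, $\phi(-y) = \phi(y)$ and $\int_{\mathbb{R}} \phi = 1$, and set
\[
 h(t) = \big( |\cdot| * \phi \big)(t) = \int_{\mathbb{R}} |t - y|\, \phi(y)\, dy.
\]
Smoothness of $h$ on all of $\mathbb{R}$ is automatic from the smoothness and compact support of $\phi$, and $h$ is even because both $|\cdot|$ and $\phi$ are.

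For part (1) I would observe that $h(t) \geq \big| \int (t-y)\phi(y)\,dy \big| = |t|$ by the triangle inequality under the integral sign, using $\int \phi = 1$ together with $\int y\,\phi(y)\,dy = 0$ (the vanishing first moment coming from evenness of $\phi$); and that when $|t| \geq \epsilon$ the quantity $t - y$ keeps a fixed sign on $\operatorname{supp}\phi$, so $|t-y| = (\operatorname{sgn} t)(t-y)$ there and the integral collapses to $(\operatorname{sgn} t)\, t = |t|$. For part (2) I would differentiate under the integral to get $h'(t) = \int \operatorname{sgn}(t-y)\,\phi(y)\,dy$, whence $|h'| \leq \int \phi = 1$; the distributional identity $(|\cdot|)'' = 2\,\delta_0$ gives $h'' = 2\phi \geq 0$, so $h$ is convex and $h'$ is nondecreasing; and since $h'$ is odd with $h'(0) = 0$, being nondecreasing forces $h'(t) \geq 0$ for all $t \geq 0$.

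There is no serious obstacle here; the only point requiring care is coordinating the support radius of $\phi$ with the threshold $\epsilon$ and exploiting the vanishing first moment, so that $h$ equals $|t|$ \emph{exactly} — not merely asymptotically — on $\{|t| \geq \epsilon\}$ while remaining $C^{\infty}$ across the junction points $t = \pm\epsilon$. (Alternatively one can integrate the kernel twice to write $h$ explicitly, which makes the three inequalities in (2) transparent, but the convolution description is the cleanest way to see all properties at once.)
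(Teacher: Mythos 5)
Your construction is correct and complete: mollifying $|t|$ against an even, nonnegative $\phi\in C_c^\infty((-\epsilon,\epsilon))$ with unit mass gives a smooth even $h$, the vanishing first moment yields $h\geq|t|$ with exact equality off $(-\epsilon,\epsilon)$, and $h''=2\phi\geq 0$ together with $|h'|\leq 1$ and the oddness of $h'$ delivers all of (2). The paper itself offers no proof of this lemma --- it simply cites Guan \cite{GuanP02} --- so your argument supplies the (standard) construction the citation points to, and I see no gap in it.
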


Define
\[ \underline{v} = \left\{ \begin{aligned} & \frac{1}{2} ( \eta + w ) + \frac{1}{2} h(\eta - w) \quad \text{ on  } \{ 0 \leq \rho \leq \delta \} \\
& w \quad \text{  on  } \{ \rho \geq \delta \}
\end{aligned} \right.. \]
Direct calculation shows that on $\{ 0 \leq \rho \leq \delta \}$,
\[ \nabla_i \underline{v} = \frac{1}{2} \Big( \nabla_i \eta + \nabla_i w \Big) + \frac{1}{2} h'(\eta - w) \Big( \nabla_i \eta - \nabla_i w \Big),  \]
\[\begin{aligned}
\nabla_{ij} \underline{v} = & \frac{1}{2} \Big( \nabla_{ij} \eta + \nabla_{ij} w \Big)  + \frac{1}{2} h' \Big( \nabla_{ij} \eta - \nabla_{ij} w \Big) \\
& + \frac{1}{2} h'' \Big( \nabla_i \eta - \nabla_i w \Big) \Big( \nabla_j \eta - \nabla_j w \Big).
\end{aligned} \]
It follows that within $\{ 0 \leq \rho \leq \delta \}$,
\[\begin{aligned}
\mathcal{W}[ \underline{v} ] \geq  & \nabla^2 \underline{v} + \frac{\Delta \underline{v}}{n - 2} g  \\
\geq & \frac{1}{2} \Big( \nabla^2 \eta + \nabla^2 w \Big)  + \frac{1}{2} h' \Big( \nabla^2 \eta - \nabla^2 w \Big) \\
& +  \frac{1}{2(n - 2)} \Big( \Delta \eta + \Delta w \Big) g  + \frac{1}{2 (n - 2)} h' \Big( \Delta \eta - \Delta w \Big) g \\
= & \frac{1 + h'}{2} \bigg( \nabla^2 \eta + \frac{\Delta \eta}{n - 2} g \bigg) + \frac{1 - h'}{2} \bigg( \nabla^2 w + \frac{\Delta w}{n - 2} g \bigg) \\
\geq & \frac{1 + h'}{2} \frac{1}{2(n - 2) (\rho + \delta^2)^2} g +  \frac{1 - h'}{2}  \frac{4 R^2}{(n - 2) \big( R^2 - |x|^2 \big)^2 } g.
\end{aligned} \]
Since on $\{ 0 \leq \rho \leq \delta \}$,
\[ \underline{v} \leq \sup \{ \eta, w \} + \frac{\epsilon}{2}, \]
we have
\[\begin{aligned}
& (n - 2) e^{- 2 \underline{v}} \mathcal{W}[ \underline{v} ]  \\
\geq &  (n - 2) e^{- 2 \underline{v}} \frac{1 + h'}{2} \frac{1}{2(n - 2) (\rho + \delta^2)^2} g +   (n - 2) e^{- 2 \underline{v}}   \frac{1 - h'}{2}  \frac{4 R^2}{(n - 2) \big( R^2 - |x|^2 \big)^2 } g \\
\geq &  \left\{ \begin{aligned}
(n - 2) e^{- 2 \eta - \epsilon} \frac{1}{2} \frac{1}{2(n - 2) (\rho + \delta^2)^2} g \quad \text{  on  } \{0 \leq \rho \leq \delta \} \cap \{ \eta \geq w \} \\
(n - 2) e^{- 2 w - \epsilon}   \frac{1}{2}  \frac{4 R^2}{(n - 2) \big( R^2 - |x|^2 \big)^2 } g \quad \text{  on  } \{0 \leq \rho \leq \delta \} \cap \{ \eta \leq w \}
\end{aligned} \right. \\
\geq & \left\{ \begin{aligned}  \frac{1}{4 \delta^4} e^{- 2 \varphi - \epsilon} g \quad \text{  on  } \{0 \leq \rho \leq \delta \} \cap \{ \eta \geq w \} \\
 e^{2 C - \epsilon} 2 R^2 g \quad \text{  on  } \{0 \leq \rho \leq \delta \} \cap \{ \eta \leq w \}
\end{aligned} \right..
\end{aligned} \]

In view of \eqref{eq4-31} and \eqref{eq4-32}, we may choose $\epsilon > 0$ sufficiently small such that
\[  \frac{\sigma_k}{\sigma_{k - 1}} \bigg( (n - 2) e^{- 2 \underline{v}}  \mathcal{W} [ \underline{v} ] \bigg) - \frac{\alpha_0(x) }{\sigma_{k - 1} \bigg( (n - 2) e^{- 2 \underline{v}}  \mathcal{W} [ \underline{v} ]  \bigg)} > - \alpha(x) \quad \text{  on  } \overline{\Omega}.
\]

\vspace{2mm}

\subsection{Preliminary estimates}

\vspace{2mm}

For a $(k - 1)$-admissible function $v \in C^2 (\overline{\Omega})$ with
\[ v \geq \underline{v}  \text{  in  }  \overline{\Omega} \quad \text{  and  }
v = \varphi  \text{  on  }  \partial \Omega, \]
we change $v$ back into $u$ by $v = \frac{2}{n - 2} \ln u$ to see that
\[ \sigma_1 \big( W[u] \big) = 2 (n - 1) \Delta u \geq 0. \]
Let $h$ be the solution to
\[ \left\{ \begin{aligned}
\Delta h = &  0 \quad & \text{  in  } \Omega, \\
h = & e^{\frac{n - 2}{2} \varphi} \quad & \text{  on  } \partial\Omega.
\end{aligned} \right. \]
By the maximum principle, $u \leq h$ in $\Omega$ and hence
\[ v \leq \overline{v} : = \frac{2}{n - 2} \ln h \quad \text{  in  } \Omega. \]
Then we have
\[ \nabla_{\nu} \underline{v} \leq \nabla_{\nu} v \leq \nabla_{\nu} \overline{v} \quad \text{  on  } \partial \Omega,   \]
where $\nu$ is the interior unit normal to $\partial\Omega$.

\vspace{2mm}

\subsection{Existence of solutions}

\vspace{2mm}

Denote
\[\begin{aligned}
G[v]: = & G( \nabla^2 v, \nabla v, v ) = F \big( \mathcal{W} [ v ] \big), \\
G^{ij}[v] : = & G^{ij} ( \nabla^2 v, \nabla v, v ) = \frac{\partial G}{\partial \nabla_{ij} v}, \\
G^{i}[v] : = & G^{i} ( \nabla^2 v, \nabla v, v ) = \frac{\partial G}{\partial \nabla_{i} v}, \\
G_v [v] : = & G_v ( \nabla^2 v, \nabla v, v ) = \frac{\partial G}{\partial v}.
\end{aligned} \]

Let $C_0$ be a positive constant such that
\begin{equation} \label{eq5-1}
G[\underline{v}] =  G( \nabla^2 \underline{v}, \nabla \underline{v}, \underline{v} ) > - C_0 \quad \text{ in } \overline{\Omega}.
\end{equation}
For $t \in [0, 1]$, we consider the following two equations (similar construction of the equations can be found in Su \cite{Su16}).
\begin{equation} \label{eq5-2}
\left\{ \begin{aligned}
G [v]  =  &    ( 1 - t ) G[\underline{v}] - t C_0   \quad  & \text{  in  }  \Omega, \\
v = &  \varphi \quad  & \text{  on  }  \partial\Omega.
\end{aligned} \right.
\end{equation}
\begin{equation} \label{eq5-3}
\left\{ \begin{aligned}
G [v]  =  &  - ( 1 - t ) C_0 -  t \frac{\alpha(x)}{n - 2} e^{2 v}   \quad & \text{  in  }  \Omega, \\
v = &  \varphi  \quad & \text{  on  }  \partial\Omega.
\end{aligned} \right.
\end{equation}

\begin{rem} \label{Remark5-1}
For $x \in \overline{\Omega}$ and a $C^2$ function $v$ which is $(k - 1)$-admissible near $x$, we have
\[ G_v (x) = - \frac{\alpha_0 (x) e^{2 k v} 2 k}{(n - 2)^k} \frac{1}{\sigma_{k - 1} \big( \mathcal{W}[v] \big)} < 0 \]
if $\alpha_0 (x) > 0$.
\end{rem}

\begin{lemma}  \label{Lemma5-1}
For $t \in [0, 1]$,  let $\underline{V}$ and $v$ be any $(k - 1)$-admissible subsolution and solution of \eqref{eq5-2}. Then $v \geq \underline{V}$ in $\Omega$. In particular, \eqref{eq5-2} has at most one $(k - 1)$-admissible solution.
\end{lemma}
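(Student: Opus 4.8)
The plan is a maximum-principle comparison argument, the decisive point being that $G$ is \emph{strictly} decreasing in its zeroth-order argument whenever $\alpha_0>0$ (Remark~\ref{Remark5-1}); it is this strict monotonicity that allows a subsolution to be pushed below a solution. Recall that being a subsolution of \eqref{eq5-2} means $G[\underline V]\ge (1-t)G[\underline v]-tC_0$ in $\Omega$ together with $\underline V\le\varphi$ on $\partial\Omega$. Suppose, for contradiction, that $\underline V>v$ somewhere in $\Omega$. Since $\underline V\le\varphi=v$ on $\partial\Omega$, the continuous function $\underline V-v$ attains a positive maximum at an interior point $x_0\in\Omega$, and there
\[ \nabla\underline V(x_0)=\nabla v(x_0),\qquad \nabla^2\underline V(x_0)\le\nabla^2 v(x_0),\qquad \underline V(x_0)>v(x_0). \]

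The next step is to compare the two Weyl-type tensors at $x_0$. Since $\mathcal{W}[\cdot]$ involves only the first and second derivatives, and the linear map $h\mapsto h+\tfrac1{n-2}(\mathrm{tr}_g h)\,g$ preserves the order of symmetric $2$-tensors, the relations above give $\mathcal{W}[\underline V](x_0)\le\mathcal{W}[v](x_0)$. Both tensors lie in $\Gamma_{k-1}$: $\mathcal{W}[\underline V](x_0)$ by admissibility of $\underline V$, and $\mathcal{W}[v](x_0)$ because it dominates an element of $\Gamma_{k-1}$ and $\Gamma_{k-1}$ is stable under adding a positive semidefinite matrix. Then, using that on $\Gamma_{k-1}$ the operator $\sigma_k/\sigma_{k-1}$ is elliptic, hence monotone along positive semidefinite directions (see \cite{Guan-Zhang}), and that $\sigma_{k-1}$ is increasing there, one obtains at $x_0$
\[ \frac{\sigma_k(\mathcal{W}[\underline V])}{\sigma_{k-1}(\mathcal{W}[\underline V])}\le\frac{\sigma_k(\mathcal{W}[v])}{\sigma_{k-1}(\mathcal{W}[v])},\qquad 0<\frac{1}{\sigma_{k-1}(\mathcal{W}[v])}\le\frac{1}{\sigma_{k-1}(\mathcal{W}[\underline V])}. \]
Since $\alpha_0(x_0)>0$ and $e^{2k\underline V(x_0)}>e^{2kv(x_0)}$, inserting these inequalities into the explicit form of $G$ (see \eqref{eq4-4}) produces the \emph{strict} inequality $G[\underline V](x_0)<G(\nabla^2 v,\nabla v,v)(x_0)=G[v](x_0)$, the strictness coming precisely from the term $-\alpha_0 e^{2kv}/((n-2)^k\sigma_{k-1})$, i.e. from $G_v<0$.

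This contradicts the subsolution/solution relations $G[\underline V](x_0)\ge(1-t)G[\underline v](x_0)-tC_0=G[v](x_0)$. Hence $\underline V\le v$ throughout $\Omega$, which is the first claim. For uniqueness, if $v_1$ and $v_2$ are two $(k-1)$-admissible solutions of \eqref{eq5-2}, each is in particular a subsolution, so applying what was just proved with $(\underline V,v)=(v_1,v_2)$ and then with $(v_2,v_1)$ gives $v_1\equiv v_2$.

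I expect the only delicate step to be the bookkeeping in the comparison of the nonlinear terms: one must keep all matrices inside $\Gamma_{k-1}$ so that the monotonicity of $\sigma_k/\sigma_{k-1}$ and of $\sigma_{k-1}$ is available, and must check that the merely non-strict inequalities coming from the second-order terms combine with the strict inequality coming from $\alpha_0>0$ in the correct direction. Everything else is the standard interior maximum principle, requiring only that $\underline V$ and $v$ be $C^2$.
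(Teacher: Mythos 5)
Your argument is correct and is essentially the paper's own proof: an interior maximum of $\underline V - v$ forces $\mathcal{W}[\underline V](x_0)\le\mathcal{W}[v](x_0)$, the (degenerate) monotonicity of $\sigma_k/\sigma_{k-1}$ and of $-1/\sigma_{k-1}$ on $\Gamma_{k-1}$ gives the non-strict comparison of the second-order parts, and the strict inequality comes from $G_v<0$ (Remark~\ref{Remark5-1}) via $\underline V(x_0)>v(x_0)$ and $\alpha_0>0$, contradicting the subsolution/solution relations. The only cosmetic difference is that you split the monotonicity in $\mathcal{W}$ into the two separate pieces, whereas the paper applies it to the combined operator $F$ in one step with the zeroth-order argument frozen at $\underline V(x_0)$.
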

\begin{proof}
Suppose that $\underline{V} - v$ achieves a positive maximum at $x_0 \in \Omega$, at which we have
\[\underline{V}(x_0) > v(x_0),\quad \nabla \underline{V}(x_0) = \nabla v(x_0), \quad \nabla^2\underline{V}(x_0) \leq \nabla^2 v(x_0). \]
It follows that $\mathcal{W}[\underline{V}](x_0) \leq \mathcal{W}[v](x_0)$,
and therefore
\[ \begin{aligned}
F \big( \mathcal{W}[\underline{V}] \big) (x_0) = & \frac{\sigma_k}{\sigma_{k - 1}} \big( \mathcal{W} [ \underline{V} ] \big)(x_0) - \frac{\alpha_0(x_0) e^{2 k \underline{V} (x_0)}}{(n - 2)^k} \frac{1}{\sigma_{k - 1} \big( \mathcal{W} [ \underline{V} ]  \big) (x_0) } \\
\leq &  \frac{\sigma_k}{\sigma_{k - 1}} \big( \mathcal{W} [ v ] \big)(x_0) - \frac{\alpha_0(x_0) e^{2 k \underline{V} (x_0)}}{(n - 2)^k} \frac{1}{\sigma_{k - 1} \big( \mathcal{W} [ v ]  \big) (x_0) } \\
< &  \frac{\sigma_k}{\sigma_{k - 1}} \big( \mathcal{W} [ v ] \big)(x_0) - \frac{\alpha_0(x_0) e^{2 k v (x_0)}}{(n - 2)^k} \frac{1}{\sigma_{k - 1} \big( \mathcal{W} [ v ]  \big) (x_0) } \\
= & F \big( \mathcal{W}[v] \big) (x_0).
\end{aligned} \]
But
\[ F \big( \mathcal{W}[\underline{V}] \big) (x_0) \geq ( 1 - t ) G[\underline{v}] (x_0) - t C_0 =  F \big( \mathcal{W}[v] \big) (x_0),   \]
which is a contradiction.
\end{proof}

\begin{thm} \label{Theorem5-1}
For $t \in [0, 1]$, \eqref{eq5-2} has a unique $(k - 1)$-admissible solution $v \geq \underline{v}$.
\end{thm}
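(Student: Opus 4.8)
The plan is to solve \eqref{eq5-2} for every $t\in[0,1]$ by the method of continuity, starting from $t=0$. Put
\[
I=\big\{\,t\in[0,1]:\ \eqref{eq5-2}\ \text{has a }(k-1)\text{-admissible solution }v\ge\underline v\,\big\}.
\]
First, $0\in I$: the barrier $\underline v\in C^\infty(\overline\Omega)$ constructed above satisfies $\mathcal W[\underline v]\ge c\,g$ for some $c>0$, hence $\lambda(\mathcal W[\underline v])\in\Gamma_n\subset\Gamma_{k-1}$, and $\underline v=\varphi$ on $\partial\Omega$ (on $\partial\Omega$ one has $\eta-w=\varphi-w>0$ bounded below, so $h(\eta-w)=\eta-w$ and $\underline v=\eta=\varphi$ there); thus $v=\underline v$ solves \eqref{eq5-2} at $t=0$. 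Moreover, by \eqref{eq5-1} the function $\underline v$ is a subsolution of \eqref{eq5-2} for \emph{every} $t$, so Lemma \ref{Lemma5-1} shows that any $(k-1)$-admissible solution of \eqref{eq5-2} automatically satisfies $v\ge\underline v$ and is unique. Hence it suffices to prove $I=[0,1]$, i.e.\ that $I$ is nonempty, open and closed.

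\emph{Openness.} Let $t_0\in I$ with solution $v_{t_0}$. Since $v_{t_0}$ is $(k-1)$-admissible on the compact set $\overline\Omega$ and $\Gamma_{k-1}$ is open, $F$ is uniformly elliptic at $\mathcal W[v_{t_0}]$, and $G_v[v_{t_0}]<0$ by Remark \ref{Remark5-1} because $\alpha_0>0$. Therefore the linearized operator $L_{t_0}w=G^{ij}[v_{t_0}]\nabla_{ij}w+G^{i}[v_{t_0}]\nabla_i w+G_v[v_{t_0}]w$ is uniformly elliptic with nonpositive zeroth order coefficient, so the Dirichlet problem for $L_{t_0}$ is uniquely solvable in $C^{2,\gamma}(\overline\Omega)$ (maximum principle for injectivity, Fredholm alternative plus Schauder theory for surjectivity). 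Applying the implicit function theorem to $\mathcal F(v,t)=\big(G[v]-(1-t)G[\underline v]+tC_0,\ v|_{\partial\Omega}-\varphi\big)$ at $(v_{t_0},t_0)$ yields, for $t$ near $t_0$, a $C^{2,\gamma}$ solution $v_t$ of \eqref{eq5-2} depending continuously on $t$; by openness of $\Gamma_{k-1}$ it stays $(k-1)$-admissible, and $v_t\ge\underline v$ by Lemma \ref{Lemma5-1}. Interior regularity and Schauder bootstrapping then make $v_t$ smooth. Hence $I$ is open.

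\emph{Closedness.} This is the main obstacle, and it reduces to a priori estimates in $C^{2,\gamma}(\overline\Omega)$ that are \emph{independent of} $t$. The lower bound $v\ge\underline v$ is already known. For the upper bound, $(k-1)$-admissibility forces $\sigma_1(\mathcal W[v])>0$, which in Euclidean coordinates reads $\tfrac{2}{n-2}\Delta v+|\nabla v|^2>0$, so $u:=e^{\frac{n-2}{2}v}$ is subharmonic and $u\le h$ where $h$ is the harmonic extension of $e^{\frac{n-2}{2}\varphi}$; thus $v\le\overline v:=\frac{2}{n-2}\ln h$ uniformly in $t$. The bound $|\nabla v|\le C$ on $\partial\Omega$ follows from $\nabla_\nu\underline v\le\nabla_\nu v\le\nabla_\nu\overline v$ together with $v=\varphi$ on $\partial\Omega$. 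Next, equation \eqref{eq5-2} has exactly the structure of \eqref{eq4-4} with the term $-\frac{\alpha(x)}{n-2}e^{2v}$ replaced by the fixed smooth function $(1-t)G[\underline v]-tC_0$, which is bounded on $\overline\Omega$ uniformly in $t$ (and whose $C^2$ norm is controlled by $\|\underline v\|_{C^4(\overline\Omega)}$); hence the estimates of Section 4 (Theorems \ref{global gradient estimate}, \ref{second order boundary estimate}, \ref{global second order estimate}) apply with only routine modifications and give $\|v\|_{C^2(\overline\Omega)}\le C$ independent of $t$. Rearranging \eqref{eq5-2} and using the Newton--Maclaurin inequality,
\[
\frac{\alpha_0 e^{2kv}}{(n-2)^k\sigma_{k-1}(\mathcal W[v])}
=\frac{\sigma_k}{\sigma_{k-1}}(\mathcal W[v])-\big((1-t)G[\underline v]-tC_0\big)
\le C\,\sigma_1(\mathcal W[v])+C_0\le C,
\]
so $\sigma_{k-1}(\mathcal W[v])\ge 1/C>0$; combined with the $C^2$ bound this confines $\lambda(\mathcal W[v])$ to a compact subset of $\Gamma_{k-1}$, so \eqref{eq5-2} is uniformly elliptic and concave on the relevant range. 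Evans--Krylov theory \cite{Evans,Krylov} (interior together with its boundary version) then upgrades this to a uniform $C^{2,\gamma}(\overline\Omega)$ bound. Consequently, if $t_j\in I$ and $t_j\to t_*$, the $v_{t_j}$ subconverge in $C^{2}(\overline\Omega)$ to a limit $v_*$ which is $(k-1)$-admissible (using $\sigma_{k-1}\ge 1/C$ and the $C^2$ bound, the admissible set with these bounds is compact in $\Gamma_{k-1}$) and solves \eqref{eq5-2} at $t_*$; thus $t_*\in I$, so $I$ is closed. Hence $I=[0,1]$, and uniqueness is Lemma \ref{Lemma5-1}.
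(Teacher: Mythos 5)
Your proposal is correct and follows essentially the same route as the paper: the method of continuity, with openness obtained from the invertibility of the linearized operator (uniform ellipticity plus $G_v<0$ from Remark \ref{Remark5-1}) and closedness from the $t$-independent $C^{2,\alpha}(\overline{\Omega})$ bound coming from the Section 4 estimates, the barrier bounds $\underline{v}\le v\le\overline{v}$, and Evans--Krylov, with uniqueness and the inequality $v\ge\underline{v}$ supplied by Lemma \ref{Lemma5-1}. Your write-up is in fact somewhat more explicit than the paper's about why $\lambda(\mathcal{W}[v])$ stays in a compact subset of $\Gamma_{k-1}$ and why the right-hand side $(1-t)G[\underline{v}]-tC_0$ is harmless for the a priori estimates, but these are elaborations of the same argument rather than a different one.
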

\begin{proof}
By Lemma \ref{Lemma5-1}, we immediately obtain uniqueness. For existence, we use standard continuity method. By assumption \eqref{eq5-1},  $\underline{v}$ is a subsolution of \eqref{eq5-2}. We note that the $C^2$ estimate for $(k - 1)$-admissible solution $v \geq \underline{v}$ of \eqref{eq5-2} implies uniform ellipticity of this equation and hence gives $C^{2, \alpha}$ estimate by Evans-Krylov theory \cite{Evans, Krylov}
\begin{equation} \label{eq5-4}
\Vert v \Vert_{C^{2, \alpha} ( \overline{ \Omega } )}  \leq C,
\end{equation}
where $C$ is independent of $t$. Denote
\[ C_0^{2, \alpha} ( \overline{ \Omega } ) = \{ w \in C^{2, \alpha}(  \overline{ \Omega }  ) \,| \, w = 0  \text{  on  }  \partial\Omega \}, \]
and consider the open subset of $C_0^{2, \alpha} (\overline{ \Omega })$
\[ \mathcal{U} = \{ w \in C_0^{2, \alpha} ( \overline{ \Omega } ) \,| \, \underline{v} + w \text{ is } (k - 1)\text{-admissible in } \overline{\Omega}  \}. \]
Define $\mathcal{L}: \mathcal{U} \times [ 0, 1 ] \rightarrow C^{\alpha}( \overline{ \Omega } )$,
\[ \mathcal{L} ( w, t ) = G [ \underline{v} + w ] - ( 1 - t ) G[\underline{v}] + t C_0,  \]
and set
\[ \mathcal{S} = \{ t \in [0, 1] \,|\, \mathcal{L}(w, t) = 0 \text{ has a solution } w \text{ in } \mathcal{U} \}. \]
$\mathcal{S} \neq \emptyset$ since $\mathcal{L}(0, 0) = 0$.

$\mathcal{S}$ is open in $[0, 1]$. In fact, for any $t_0 \in \mathcal{S}$, there exists $w_0 \in \mathcal{U}$ such that $\mathcal{L} (w_0, t_0) = 0$. Note that the Fr\'echet derivative of $\mathcal{L}$ with respect to $w$ at $(w_0, t_0)$ is a linear elliptic operator from $C^{2, \alpha}_0 ( \overline{\Omega} )$ to $C^{\alpha}( \overline{\Omega})$,
\[ \mathcal{L}_w \big|_{(w_0, t_0)} ( h )  = G^{ij}[\underline{v} + w_0] \nabla_{ij} h  +  G^i [ \underline{v} + w_0 ] \nabla_i h  + G_v [ \underline{v} + w_0] h. \]
Remark \ref{Remark5-1} implies $\mathcal{L}_w \big|_{(w_0, t_0)}$ is invertible. Thus a neighborhood of $t_0$ is also contained in $\mathcal{S}$ by implicit function theorem.

$\mathcal{S}$ is closed in $[0, 1]$. In fact, let $t_i$ be a sequence in $\mathcal{S}$ converging to $t_0 \in [0, 1]$ and $w_i \in \mathcal{U}$ be the unique solution to $\mathcal{L} (w_i, t_i) = 0$. Lemma \ref{Lemma5-1} implies $w_i \geq 0$. By \eqref{eq5-4}, $v_i := \underline{v} + w_i$ is a bounded sequence in $C^{2, \alpha}(\overline{\Omega})$, which possesses a subsequence converging to a $(k - 1)$-admissible solution $v_0$ of \eqref{eq5-2}. Since $w_0 = v_0 - \underline{v} \in \mathcal{U}$ and $\mathcal{L}(w_0, t_0) = 0$, we know that $t_0 \in \mathcal{S}$.
\end{proof}

Next we may assume that $\underline{v}$ is not a solution of \eqref{eq4-30}.

\begin{lemma} \label{Lemma5-2}
If $v \geq \underline{v}$ is a $(k - 1)$-admissible solution of \eqref{eq5-3}, then
$v > \underline{v}$ in $\Omega$ and $\nabla_{\nu}(v - \underline{v}) > 0$ on $\partial\Omega$.
\end{lemma}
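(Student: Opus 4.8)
The plan is to read this off from the fact that $\underline{v}$ is a \emph{strict} subsolution of \eqref{eq5-3} for every $t\in[0,1]$. Indeed, \eqref{eq5-1} gives $G[\underline v]>-C_0$ on $\overline\Omega$, while the construction of $\underline v$ in the previous subsection gives $G[\underline v]>-\frac{\alpha(x)}{n-2}e^{2\underline v}$ on $\overline\Omega$; taking the convex combination with weights $1-t$ and $t$ yields $G[\underline v]>-(1-t)C_0-t\frac{\alpha(x)}{n-2}e^{2\underline v}$, which is the right-hand side of \eqref{eq5-3} evaluated at $\underline v$. Set $\phi:=v-\underline v$, so that $\phi\ge 0$ in $\overline\Omega$ and $\phi=0$ on $\partial\Omega$ by hypothesis; what must be shown is that $\phi>0$ in $\Omega$ and $\nabla_{\nu}\phi>0$ on $\partial\Omega$.

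For the first assertion I would argue directly, without linearizing. Suppose $\phi(x_0)=0$ at some interior point $x_0$. Then $x_0$ is an interior minimum of $\phi$, so $\nabla v(x_0)=\nabla\underline v(x_0)$, $v(x_0)=\underline v(x_0)$ and $\nabla^2 v(x_0)\ge\nabla^2\underline v(x_0)$. Since $\mathcal W[\cdot]$ is affine in the Hessian and the first-order and zeroth-order contributions coincide at $x_0$, we obtain $\mathcal W[v](x_0)-\mathcal W[\underline v](x_0)=\nabla^2\phi(x_0)+\frac{\Delta\phi(x_0)}{n-2}g\ge 0$, and the segment joining $\mathcal W[\underline v](x_0)$ to $\mathcal W[v](x_0)$ has eigenvalues in $\Gamma_{k-1}$ by convexity of the cone. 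As $G$ is elliptic and $\alpha_0>0$, $G$ is nondecreasing along this segment, so $G[v](x_0)\ge G[\underline v](x_0)$. But $G[v](x_0)$ equals the right-hand side of \eqref{eq5-3} at $x_0$, which (using $v(x_0)=\underline v(x_0)$) is strictly below $G[\underline v](x_0)$ by the strict subsolution inequality — a contradiction. Hence $\phi>0$ in $\Omega$.

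For the boundary gradient I would linearize. Fix $x_0\in\partial\Omega$. Integrating $\partial_s$ of $G$ along the paths $W_s:=s\mathcal W[v]+(1-s)\mathcal W[\underline v]$ (whose eigenvalues stay in $\Gamma_{k-1}$ by convexity) and $z_s:=sv+(1-s)\underline v$ gives $G[v]-G[\underline v]=a^{ij}\nabla_{ij}\phi+b^i\nabla_i\phi+c\,\phi$, where $(a^{ij})$ is positive definite — uniformly elliptic in a one-sided neighborhood of $x_0$, since $v,\underline v\in C^2(\overline\Omega)$ are $(k-1)$-admissible and $\inf_\Omega\alpha_0>0$ keeps $1/\sigma_{k-1}(\mathcal W[v])$ bounded (cf. the remark after Theorem \ref{second order boundary estimate}) — $b^i$ is bounded (here I use that $\mathcal W_{ij}[v]-\mathcal W_{ij}[\underline v]$ is $\nabla_{ij}\phi$ plus terms linear in $\nabla\phi$ with bounded coefficients), and $c$ is an average of $G_v$ along $z_s$, hence $c<0$ by Remark \ref{Remark5-1}. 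Since $v$ solves \eqref{eq5-3} and $\underline v$ is a strict subsolution, $G[v]-G[\underline v]<-t\frac{\alpha}{n-2}(e^{2v}-e^{2\underline v})=-\frac{2t\alpha}{n-2}e^{2\xi}\phi$ for some $\xi$ between $\underline v$ and $v$, so that $a^{ij}\nabla_{ij}\phi+b^i\nabla_i\phi+\tilde c\,\phi<0$ with $\tilde c:=c+\frac{2t\alpha}{n-2}e^{2\xi}$ bounded; because $\phi\ge 0$ we may subtract the positive part of $\tilde c$ and keep a strict, uniformly elliptic inequality with nonpositive zeroth-order coefficient. As $\Omega$ satisfies an interior ball condition at $x_0$, $\phi>0$ in $\Omega$, and $\phi(x_0)=0$, the Hopf boundary point lemma yields $\nabla_{\nu}\phi(x_0)>0$.

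The only point requiring genuine care is the Hopf step: one must record that the linearized operator is uniformly elliptic up to $\partial\Omega$, and that although the sign of its zeroth-order coefficient is not controlled when $\alpha>0$, this is harmless precisely because $\phi\ge 0$ and $\phi(x_0)=0$, so the positive part of that coefficient may be discarded before invoking Hopf. The interior positivity $\phi>0$ is comparatively routine once the strictness of the subsolution is in hand.
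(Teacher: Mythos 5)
Your proof is correct and follows essentially the same route as the paper: there, too, the difference $G[\underline v]-G[v]$ is written as a linear elliptic operator acting on $\underline v-v$ by integrating $F$ along the segment joining $\mathcal W[v]$ and $\mathcal W[\underline v]$, after which the conclusion is drawn from ``the Maximum Principle and Lemma~H'' of Gidas--Ni--Nirenberg --- which is exactly your observation that the unsigned zeroth--order coefficient is harmless because $v-\underline v\ge 0$ vanishes at the boundary point, so its positive part can be discarded before invoking Hopf. The only minor deviation is the interior step, where you replace the strong maximum principle for the linearized operator by a direct touching--point argument based on the strict inequality $G[\underline v]>-(1-t)C_0-t\frac{\alpha}{n-2}e^{2\underline v}$; this is valid and in fact makes explicit the strictness that the paper uses only implicitly (via its standing assumption that $\underline v$ is not a solution).
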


\begin{proof}
We note that
\[ G[\underline{v}] - G[v] \geq - t \frac{\alpha(x)}{n - 2} (e^{2 \underline{v}} - e^{2 v}), \]
and
\[\begin{aligned}
& G [\underline{v}] - G [v] = F \big( \mathcal{W}[\underline{v}] \big) -  F \big( \mathcal{W}[v] \big) \\
= & \int_0^1 \frac{d}{d s} \Bigg(  \frac{\sigma_k}{\sigma_{k - 1}}  \Big( (1 - s) \mathcal{W} [ v ] + s \mathcal{W}[\underline{v}] \Big) - \frac{\alpha_0(x) e^{2 k ( (1 - s ) v + s \underline{v} )}}{(n - 2)^k \sigma_{k - 1} \Big( (1 - s) \mathcal{W} [ v ] + s \mathcal{W}[\underline{v}] \Big) } \Bigg) d s \\
= & \underbrace{\int_0^1 \Bigg( \Big( \frac{\sigma_k}{\sigma_{k - 1}} \Big)^{ij} +  \frac{\alpha_0(x) e^{2 k ( (1 - s ) v + s \underline{v} )} \sigma_{k - 1}^{ij}}{(n - 2)^k \sigma_{k - 1}^2 } \Bigg) \Big( (1 - s) \mathcal{W}[v] + s \mathcal{W}[\underline{v}] \Big)  d s }_{\text{denoted by } a_{ij}} \\
& \cdot \Big( \mathcal{W}_{ij}[\underline{v}] - \mathcal{W}_{ij}[v] \Big) - \underbrace{\int_0^1 \frac{\alpha_0(x) e^{2 k ( (1 - s ) v + s \underline{v} )} 2 k}{(n - 2)^k \sigma_{k - 1} \Big( (1 - s) \mathcal{W} [ v ] + s \mathcal{W}[\underline{v}] \Big) } d s }_{\text{denoted by } c} \cdot (\underline{v} - v) \\
= & a_{ij} \bigg( \nabla_{ij} (\underline{v} - v) + \frac{\Delta(\underline{v} - v)}{n - 2} \delta_{ij} \bigg) - a_{ij} \nabla_i \underline{v} \nabla_j (\underline{v} - v) - a_{ij} \nabla_j v \nabla_i (\underline{v} - v) \\
& + \nabla (\underline{v} + v) \cdot \nabla (\underline{v} - v) \sum a_{ii}  - c (\underline{v} - v).
\end{aligned} \]
Applying the Maximum Principle and Lemma H (see p. 212 of \cite{GNN}) we proved the lemma.
\end{proof}

\begin{thm} \label{Theorem5-2}
For any $t \in [0, 1]$, there is a $(k - 1)$-admissible solution $v \geq \underline{v}$ to Dirichlet problem \eqref{eq5-3}.
\end{thm}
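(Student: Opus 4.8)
The plan is to solve \eqref{eq5-3} for every $t\in[0,1]$ by a Leray--Schauder degree argument, taking $t=0$ as the base point: there \eqref{eq5-3} reads $G[v]=-C_0$, which is exactly \eqref{eq5-2} at parameter $1$ and hence, by Theorem \ref{Theorem5-1}, has a unique $(k-1)$-admissible solution $v_0\ge\underline v$. Unlike \eqref{eq5-2}, the right-hand side of \eqref{eq5-3} depends on $v$, so the linearization of the associated operator picks up the extra zeroth-order term $\tfrac{2t\alpha(x)}{n-2}e^{2v}$, which need not be nonpositive when $\alpha>0$; thus the continuity method used for Theorem \ref{Theorem5-1} is not directly available, and degree theory is the natural substitute. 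Two preliminary facts I would record: first, for every $t$ the function $\underline v$ is a strict subsolution of \eqref{eq5-3}, since $G[\underline v]>-C_0$ by \eqref{eq5-1} and $G[\underline v]>-\tfrac{\alpha(x)}{n-2}e^{2\underline v}$ is exactly the inequality established at the end of the subsection on subsolutions, so $G[\underline v]$ strictly exceeds the convex combination $-(1-t)C_0-t\tfrac{\alpha(x)}{n-2}e^{2\underline v}$; hence $\underline v$ is never a solution of \eqref{eq5-3}, and Lemma \ref{Lemma5-2} applies to any $(k-1)$-admissible solution $v\ge\underline v$ of \eqref{eq5-3}, giving $v>\underline v$ in $\Omega$ and $\nabla_\nu(v-\underline v)>0$ on $\partial\Omega$.

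Next I would assemble uniform a priori estimates. For any $(k-1)$-admissible solution $v\ge\underline v$ of \eqref{eq5-3}, the preliminary estimates of the preceding subsection give $\underline v\le v\le\overline v$, hence a $t$-independent $C^0$ bound, and the boundary gradient bound $\nabla_\nu\underline v\le\nabla_\nu v\le\nabla_\nu\overline v$ on $\partial\Omega$ together with $v=\varphi$ there. Since \eqref{eq5-3} has the structure of \eqref{eq4-3} modulo the bounded term $-(1-t)C_0$ and coefficients bounded uniformly in $t$, Theorems \ref{global gradient estimate}, \ref{second order boundary estimate} and \ref{global second order estimate} give $\|v\|_{C^2(\overline\Omega)}\le C$ with $C$ independent of $t$. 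The Remark following Theorem \ref{second order boundary estimate} gives in addition $\sigma_{k-1}\big(\mathcal W[v]\big)\ge c_0>0$; with the $C^2$ bound and Newton--Maclaurin this forces $\sigma_j\big(\mathcal W[v]\big)\ge c_j>0$ for $1\le j\le k-1$, so $\lambda(\mathcal W[v])$ stays in a fixed compact set $K\Subset\Gamma_{k-1}$ and \eqref{eq5-3} is uniformly elliptic along the whole family. Evans--Krylov theory \cite{Evans, Krylov} then upgrades the bound to $\|v\|_{C^{2,\alpha}(\overline\Omega)}\le C$, uniformly in $t$.

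Armed with these estimates I would fix a large $C$, a small $\delta_0>0$ and a small $\epsilon_0>0$, and set
\[ \mathcal O=\Big\{\,v\in C^{2,\alpha}(\overline\Omega):\ v=\varphi\text{ on }\partial\Omega,\ \|v\|_{C^{2,\alpha}(\overline\Omega)}<C+1,\ v-\underline v>\delta_0\rho\text{ in }\overline\Omega,\ \mathrm{dist}\big(\lambda(\mathcal W[v](x)),\partial\Gamma_{k-1}\big)>\epsilon_0\ \ \forall x\,\Big\}, \]
where $\rho=\mathrm{dist}(\cdot,\partial\Omega)$; since $v-\underline v\in C^1(\overline\Omega)$ vanishes on $\partial\Omega$, the quotient $(v-\underline v)/\rho$ extends continuously to $\overline\Omega$, so $\mathcal O$ is open and bounded. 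By the estimates above and a compactness argument (any sequence of solutions for parameters $t_j$ has a $C^2$-convergent subsequence whose limit is again a $(k-1)$-admissible solution $\ge\underline v$, to which Lemma \ref{Lemma5-2} applies), every $(k-1)$-admissible solution $v\ge\underline v$ of \eqref{eq5-3}, for any $t\in[0,1]$, satisfies $\|v\|_{C^{2,\alpha}}<C+1$, $v-\underline v\ge 2\delta_0\rho$ and $\mathrm{dist}(\lambda(\mathcal W[v]),\partial\Gamma_{k-1})\ge 2\epsilon_0$ for a suitable choice of $\delta_0,\epsilon_0$. Hence no solution of \eqref{eq5-3} lies on $\partial\mathcal O$, and the equation is uniformly elliptic and concave on $\overline{\mathcal O}$, so the degree $\deg(\mathcal F_t,\mathcal O,0)$ of the map $\mathcal F_t(v):=G[v]+(1-t)C_0+t\tfrac{\alpha(x)}{n-2}e^{2v}$ from $\overline{\mathcal O}\subset C^{2,\alpha}(\overline\Omega)$ to $C^{\alpha}(\overline\Omega)$ is well defined in the sense of degree theory for fully nonlinear uniformly elliptic operators and, by homotopy invariance, independent of $t$. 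To compute it at $t=0$: by Theorem \ref{Theorem5-1} the only zero of $\mathcal F_0=G[\cdot]+C_0$ in $\mathcal O$ is $v_0$, which lies in $\mathcal O$ by Lemma \ref{Lemma5-2} (after shrinking $\delta_0,\epsilon_0$), and its linearization $L_0h=G^{ij}[v_0]\nabla_{ij}h+G^i[v_0]\nabla_ih+G_v[v_0]h$ has $G_v[v_0]<0$ by Remark \ref{Remark5-1}, hence trivial kernel on $C^{2,\alpha}_0(\overline\Omega)$; so $v_0$ is a nondegenerate zero and $\deg(\mathcal F_0,\mathcal O,0)=\pm1\ne0$. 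Therefore $\deg(\mathcal F_t,\mathcal O,0)\ne0$ for every $t$, which yields a $(k-1)$-admissible solution $v\ge\underline v$ of \eqref{eq5-3}.

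The main obstacle is the bookkeeping that makes the degree legitimate: one must design $\mathcal O$ so that the a priori estimates are actually available on $\overline{\mathcal O}$ — this is why the constraint $v\ge\underline v$ is built into $\mathcal O$, as it supplies the missing $C^0$ lower bound and licenses the use of Lemma \ref{Lemma5-2}, thereby breaking the apparent circularity with that lemma's hypothesis $v\ge\underline v$; one must then ensure no solution reaches $\partial\mathcal O$, for which the strict inequalities of Lemma \ref{Lemma5-2} and the eigenvalue compactness coming from the Remark after Theorem \ref{second order boundary estimate} are exactly the tools needed (via the compactness argument over all $t$); and one must keep the operator uniformly elliptic and concave on $\overline{\mathcal O}$ so that the fully-nonlinear degree applies and is homotopy-invariant. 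The remaining care, which is routine given the form of the arguments in Section 4, is to check that those gradient, second-order interior and second-order boundary estimates indeed hold for \eqref{eq5-3} with constants independent of $t$, the extra term $(1-t)C_0$ being harmless and the role of $\inf\alpha_0$ being unchanged.
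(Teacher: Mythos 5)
Your proposal is correct and follows essentially the same route as the paper: a degree-theoretic homotopy in $t$ based at $t=0$ (where Theorem \ref{Theorem5-1} supplies the unique solution), with uniform a priori estimates, an admissible open set whose boundary is avoided by solutions thanks to Lemma \ref{Lemma5-2}, and a nonzero degree at $t=0$ coming from the invertibility of the linearization via Remark \ref{Remark5-1}. The only cosmetic difference is that the paper works in $C^{4,\alpha}_0(\overline{\Omega})$ (upgrading via Schauder theory, as required by the degree theory of \cite{Li89}) and encodes positivity through $w>0$ in $\Omega$ and $\nabla_\nu w>0$ on $\partial\Omega$ rather than your $v-\underline{v}>\delta_0\rho$ condition, but these are equivalent in substance.
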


\begin{proof}
We obtain by classical Schauder theory the $C^{4, \alpha}$ estimate
\begin{equation} \label{eq5-5}
\Vert v \Vert_{C^{4,\alpha}(\overline{\Omega})} < C_4.
\end{equation}
Also, we have
\begin{equation} \label{eq5-6}
\text{dist} \big( \lambda(\mathcal{W}[v]), \partial\Gamma_{k - 1} \big) > c_2 > 0 \quad \text{  in  }  \overline{\Omega},
\end{equation}
where $C_4$ and $c_2$ are independent of $t$. Denote
\[ C_0^{4, \alpha} (\overline{\Omega}) = \big\{ w \in C^{4, \alpha}( \overline{\Omega}) \,|\, w = 0  \text{  on }  \partial\Omega \big\} \]
and the open bounded subset of $C_0^{4, \alpha} (\overline{\Omega})$
\[ \mathcal{O} = \Bigg\{ w \in C_0^{4, \alpha} (\overline{\Omega}) \left\vert \begin{footnotesize}\begin{aligned} & w > 0 \text{  in  } \Omega,  \nabla_{\nu} w > 0 \text{  on  } \partial\Omega,  \Vert w {\Vert}_{C^{4,\alpha}(\overline{\Omega})} < C_4 + \Vert\underline{v} \Vert_{C^{4,\alpha}(\overline{\Omega})}, \\
& \underline{v} + w  \text{ is } (k - 1) \text{-admissible in }  \overline{\Omega}, \text{dist} \Big(\lambda \big( \mathcal{W} [\underline{v} + w] \big), \partial \Gamma_{k - 1} \Big) > c_2   \text{ in }  \overline{\Omega}  \end{aligned}\end{footnotesize} \right.\Bigg\}. \]
Define a map
$\mathcal{M}_t (w):  \mathcal{O} \times [ 0, 1 ] \rightarrow C^{2, \alpha}(\overline{\Omega})$,
\[ \mathcal{M}_t (w) = G [ \underline{v} + w ]  + ( 1 - t ) C_0 + t \frac{\alpha(x)}{n - 2} e^{2 (\underline{v} + w)}. \]
By Theorem \ref{Theorem5-1} and Lemma \ref{Lemma5-1}, there is a unique $(k - 1)$-admissible solution $v^0$ of \eqref{eq5-2} at $t = 1$, which is also the unique $(k - 1)$-admissible solution of \eqref{eq5-3} at $t = 0$. By Lemma \ref{Lemma5-1}, $w^0 = v^0 - \underline{v} \geq 0$ in $\Omega$. Consequently, $w^0 > 0$ in $\Omega$ and $\nabla_{\nu} w^0 > 0$ on $\partial\Omega$ by Lemma \ref{Lemma5-2}. We also note that $\underline{v} + w^0$ satisfies \eqref{eq5-5} and \eqref{eq5-6}. Thus, $w^0 \in \mathcal{O}$. By Lemma \ref{Lemma5-2}, \eqref{eq5-5} and \eqref{eq5-6}, $\mathcal{M}_t(w) = 0$ has no solution on $\partial\mathcal{O}$ for any $t \in [0, 1]$. Since $\mathcal{M}_t$ is uniformly elliptic on $\mathcal{O}$ independent of $t$, the degree of $\mathcal{M}_t$ on $\mathcal{O}$ at $0$ can be defined independent of $t$. This degree is nonzero at $t = 0$. In fact,
since $\mathcal{M}_0 ( w ) = 0$ has a unique solution $w^0 \in \mathcal{O}$. The Fr\'echet derivative of $\mathcal{M}_0$ with respect to $w$ at $w^0$ is a linear elliptic operator from $C^{4, \alpha}_0 (\overline{\Omega})$ to $C^{2, \alpha}(\overline{\Omega})$,
\begin{equation*}
\mathcal{M}_{0, w} |_{w^0} ( h )  =  G^{ij}[ v^0 ] \nabla_{ij} h  + G^i [ v^0 ] \nabla_i h + G_v [ v^0 ] h.
\end{equation*}
By Remark \ref{Remark5-1}, $G_v [ v^0 ] < 0$ in $\overline{\Omega}$. Thus, $\mathcal{M}_{0, w} |_{w^0}$ is invertible. The degree theory in \cite{Li89} implies that the degree at $t = 0$ is nonzero, which implies that \eqref{eq5-3} has at least one $(k - 1)$-admissible solution $v \geq \underline{v}$ for any $t \in [0, 1]$.
\end{proof}

\vspace{4mm}

\section{Fully nonlinear Loewner-Nirenberg problem for general equations}

\vspace{4mm}

In this section, we discuss fully nonlinear Loewner-Nirenberg problem related to general equation \eqref{eq4-1}.

\vspace{2mm}

\subsection{The case of smooth compact boundary consisting of closed hypersurfaces}~

\vspace{2mm}

We first give the definition of supersolutions and subsolutions to equation \eqref{eq4-1}.

\begin{defn}
A function $0 < \underline{u} \in C^2(\Omega)$ is a subsolution of \eqref{eq4-1} in $\Omega$ if
\[\begin{aligned}
& \lambda\big( W [\underline{u}] \big) \in \Gamma_{k - 1}  \text{    and    }  \\
\sigma_k \bigg( \frac{2}{n - 2} \underline{u}^{- \frac{n + 2}{n - 2}}  W[ \underline{u} ] \bigg) & + \alpha(x) \sigma_{k - 1} \bigg( \frac{2}{n - 2} \underline{u}^{- \frac{n + 2}{n - 2}} W[ \underline{u} ] \bigg) \geq \alpha_0(x) \quad \text{  in  }  \Omega.
\end{aligned} \]
A function $0 < \overline{u} \in C^2(\Omega)$ is a supersolution of \eqref{eq4-1} in $\Omega$ if
\[\begin{aligned}
& \text{either  } \lambda\big( W [\overline{u}] \big) \notin \Gamma_{k - 1}  \text{  or  } \\
\sigma_k \bigg( \frac{2}{n - 2} \overline{u}^{- \frac{n + 2}{n - 2}}  W[ \overline{u} ] \bigg) & + \alpha(x) \sigma_{k - 1} \bigg( \frac{2}{n - 2} \overline{u}^{- \frac{n + 2}{n - 2}} W[ \overline{u} ] \bigg) \leq \alpha_0(x) \quad \text{  in  }  \Omega.
\end{aligned} \]
\end{defn}

\begin{prop} \label{Prop6-1}
If $\underline{u}$ is a subsolution of \eqref{eq4-1} satisfying
\begin{equation} \label{eq6-4}
\lambda\big( W [\underline{u}] \big) \in \overline{\Gamma}_k \quad \text{  in  } \Omega,
\end{equation}
so does $c \underline{u}$ for any constant $0 < c < 1$. If $\overline{u}$ is a supersolution of \eqref{eq4-1} satisfying
\[  \lambda\big( W [\overline{u}] \big) \notin \Gamma_{k - 1} \quad \text{everywhere in  } \Omega, \]
then $c \overline{u}$ is a supersolution of \eqref{eq4-1} for any $c > 0$.
\end{prop}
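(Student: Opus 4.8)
The plan is to exploit the elementary homogeneity of $W$: just as for \eqref{conformal2}, each term in \eqref{wu} scales linearly in $u$, so $W[cu]=c\,W[u]$ for every constant $c>0$. Writing $A:=\frac{2}{n-2}\,\underline{u}^{-\frac{n+2}{n-2}}\,W[\underline{u}]$ and using $(c\underline{u})^{-\frac{n+2}{n-2}}=c^{-\frac{n+2}{n-2}}\underline{u}^{-\frac{n+2}{n-2}}$, a one-line computation gives
\[
\frac{2}{n-2}\,(c\underline{u})^{-\frac{n+2}{n-2}}\,W[c\underline{u}] \;=\; c^{\,1-\frac{n+2}{n-2}}\,A \;=\; c^{-\frac{4}{n-2}}\,A \;=:\; tA ,
\]
where $0<c<1$ forces $t>1$ since $-\tfrac{4}{n-2}<0$. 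Because $t>0$ and $\Gamma_{k-1}$ is a cone, $\lambda(tA)=t\,\lambda(A)\in\Gamma_{k-1}$, so $c\underline{u}$ satisfies the admissibility requirement in the definition of a subsolution of \eqref{eq4-1}.

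For the differential inequality we use $\sigma_j(tA)=t^{j}\sigma_j(A)$ to rewrite the left-hand side of \eqref{eq4-1} at $c\underline{u}$ as
\[
\sigma_k(tA)+\alpha(x)\,\sigma_{k-1}(tA)\;=\;t^{k-1}\Bigl(t\,\sigma_k(A)+\alpha(x)\,\sigma_{k-1}(A)\Bigr).
\]
Hypothesis \eqref{eq6-4} yields $\lambda(A)\in\overline{\Gamma}_k$, hence $\sigma_k(A)\geq 0$, and the subsolution inequality for $\underline{u}$ reads $\sigma_k(A)+\alpha(x)\sigma_{k-1}(A)\geq\alpha_0(x)$. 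Combining these with $t>1$,
\[
t\,\sigma_k(A)+\alpha(x)\,\sigma_{k-1}(A)\;=\;\bigl(\sigma_k(A)+\alpha(x)\sigma_{k-1}(A)\bigr)+(t-1)\sigma_k(A)\;\geq\;\alpha_0(x)\;>\;0 .
\]
Since $k\geq 2$ we have $t^{k-1}\geq 1$, so multiplying the positive quantity above by $t^{k-1}$ only increases it, giving $\sigma_k(tA)+\alpha(x)\sigma_{k-1}(tA)\geq\alpha_0(x)$. This is exactly the subsolution inequality for $c\underline{u}$, so $c\underline{u}$ is a subsolution of \eqref{eq4-1}.

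The supersolution statement is immediate: $W[c\overline{u}]=c\,W[\overline{u}]$ and $c>0$ give $\lambda(W[c\overline{u}])=c\,\lambda(W[\overline{u}])$, and since $\Gamma_{k-1}$ is a cone this lies in $\Gamma_{k-1}$ if and only if $\lambda(W[\overline{u}])$ does; as $\lambda(W[\overline{u}])\notin\Gamma_{k-1}$ everywhere in $\Omega$ by assumption, $\lambda(W[c\overline{u}])\notin\Gamma_{k-1}$ everywhere, which is already the first alternative in the definition of a supersolution, so no estimate is needed. The only point deserving care in the whole argument is that the rescaling multiplies the matrix argument by $t=c^{-4/(n-2)}>1$ rather than by $c$, and that the sign condition $\sigma_k(A)\geq 0$ from \eqref{eq6-4} is precisely what makes the error term $(t-1)\sigma_k(A)$ nonnegative; without \eqref{eq6-4} the rescaling could destroy the inequality. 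As in Proposition \ref{prop}, it is the factor $t^{k-1}$ (not $t^{k}$) together with $\alpha_0>0$ that preserves the lower bound $\geq\alpha_0(x)$.
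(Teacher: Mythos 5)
Your proof is correct and follows essentially the same route as the paper: both arguments rest on the homogeneity $W[c\underline{u}]=cW[\underline{u}]$, the resulting factor $t=c^{-4/(n-2)}>1$, the sign $\sigma_k\geq 0$ supplied by \eqref{eq6-4}, the positivity of $\alpha_0$, and the cone property of $\Gamma_{k-1}$ for the supersolution half. The only cosmetic difference is that the paper verifies the monotonicity on the quotient form $\frac{\sigma_k}{\sigma_{k-1}}-\frac{\alpha_0}{\sigma_{k-1}}\geq-\alpha$ term by term, whereas you factor $t^{k-1}$ out of the polynomial form $\sigma_k+\alpha\sigma_{k-1}\geq\alpha_0$; the two computations are equivalent.
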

\begin{proof}
When $\lambda\big( W[ \underline{u}] \big) \in \overline{\Gamma}_k$,
\[
\begin{aligned}
& \frac{\sigma_k}{\sigma_{k - 1}} \bigg( \frac{2}{n - 2} c^{- \frac{4}{n - 2}} \underline{u}^{- \frac{n + 2}{n - 2}}  W[ \underline{u} ] \bigg) - \frac{ \alpha_0(x)}{\sigma_{k - 1} \bigg( \frac{2}{n - 2} c^{- \frac{4}{n - 2}} \underline{u}^{- \frac{n + 2}{n - 2}}  W[ \underline{u} ] \bigg)} \\
\geq & \frac{\sigma_k}{\sigma_{k - 1}} \bigg( \frac{2}{n - 2} \underline{u}^{- \frac{n + 2}{n - 2}}  W[ \underline{u} ] \bigg) - \frac{ \alpha_0(x)}{\sigma_{k - 1} \bigg( \frac{2}{n - 2} \underline{u}^{- \frac{n + 2}{n - 2}}  W[ \underline{u} ] \bigg)} \geq - \alpha(x).
\end{aligned}
\]
Hence we proved the first statement. The second statement is straightforward by definition.
\end{proof}

Then we observe the following maximum principle of \eqref{eq4-1}.
\begin{thm} \label{Theorem6-1}
Let $M$ be a smooth compact manifold with boundary. Suppose that $\underline{u}$ is a subsolution of
\begin{equation*}
\begin{aligned}
\sigma_k \bigg( \frac{2}{n - 2} u^{- \frac{n + 2}{n - 2}}  W[ u ] \bigg) + \alpha (x) \sigma_{k - 1} \bigg( \frac{2}{n - 2} u^{- \frac{n + 2}{n - 2}} W[ u ] \bigg) = \alpha_0(x)
\end{aligned}
\end{equation*}
satisfying
\[
\lambda\big( W [\underline{u}] \big) \in \Gamma_k \quad \text{  in  } \Omega,
\]
and $\overline{u}$ is a supersolution of
\begin{equation*}
\begin{aligned}
\sigma_k \bigg( \frac{2}{n - 2} u^{- \frac{n + 2}{n - 2}}  W[ u ] \bigg) + \beta(x) \sigma_{k - 1} \bigg( \frac{2}{n - 2} u^{- \frac{n + 2}{n - 2}} W[ u ] \bigg) = \beta_0(x),
\end{aligned}
\end{equation*}
with $\alpha(x) \leq \beta(x)$ and $\alpha_0(x) \geq \beta_0(x)$ on $M$. If $\underline{u} \leq \overline{u}$ on $\partial M$, then $\underline{u} \leq \overline{u}$ on $M$.
\end{thm}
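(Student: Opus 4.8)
The plan is to argue by contradiction, in the style of Theorem~\ref{MP}. Suppose that $\underline{u}(x) > \overline{u}(x)$ at some interior point of $M$. Since $M$ is compact and $\underline{u} \le \overline{u}$ on $\partial M$, the ratio $C := \max_M (\underline{u}/\overline{u})$ exceeds $1$ and is attained at an interior point $x_0$. Set $w := C^{-1}\underline{u}$, so $w \le \overline{u}$ on $M$ with equality at $x_0$. The crucial first step is to check that $w$ is a \emph{strict} subsolution of the $(\alpha,\alpha_0)$--equation, still with $\lambda(W[w]) \in \Gamma_k$. Writing $\widetilde W[u] := \frac{2}{n-2} u^{-\frac{n+2}{n-2}} W[u]$ and using $W[cu]=c\,W[u]$, scaling gives $\widetilde W[w] = \mu\,\widetilde W[\underline{u}]$ with $\mu := C^{\frac{4}{n-2}} > 1$; feeding this into the computation in the proof of Proposition~\ref{Prop6-1}, the factor $\mu>1$ multiplying the $\sigma_k/\sigma_{k-1}$ term and the factor $\mu^{1-k}<1$ (valid since $k\ge 2$) multiplying the $\alpha_0/\sigma_{k-1}$ term, together with $\sigma_k(\widetilde W[\underline{u}])>0$ and $\alpha_0>0$, upgrade the subsolution inequality to a strict one.

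Next I would localize at $x_0$. Since $w-\overline{u}$ attains an interior maximum equal to $0$ there, $\nabla w(x_0)=\nabla\overline{u}(x_0)$ and $\nabla^2 w(x_0)\le\nabla^2\overline{u}(x_0)$; as \eqref{wu} displays $W[u]$ as depending on the second derivatives only through the monotone combination $(n-2)\nabla^2 u+(\mathrm{tr}_g\nabla^2 u)\,g$, the remaining terms depending only on $u$ and $\nabla u$, this forces $W[w](x_0)\le W[\overline{u}](x_0)$, hence $\widetilde W[w](x_0)\le\widetilde W[\overline{u}](x_0)$ (the conformal factors in $\widetilde W$ agree at $x_0$). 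Because $\lambda(\widetilde W[w](x_0))\in\Gamma_k\subset\Gamma_{k-1}$ and $\Gamma_{k-1}$ is monotone (if $A\le B$ and $\lambda(A)\in\Gamma_{k-1}$ then $\lambda(B)\in\Gamma_{k-1}$), we get $\lambda(\widetilde W[\overline{u}](x_0))\in\Gamma_{k-1}$, which excludes the first alternative in the definition of a supersolution at $x_0$; thus $\overline{u}$ satisfies the $(\beta,\beta_0)$--inequality there, i.e.\ $\frac{\sigma_k}{\sigma_{k-1}}(\widetilde W[\overline{u}]) - \frac{\beta_0}{\sigma_{k-1}(\widetilde W[\overline{u}])} \le -\beta$ at $x_0$.

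To finish, I would chain the two inequalities at $x_0$. On $\Gamma_{k-1}$ both $\sigma_k/\sigma_{k-1}$ and, since $\alpha_0(x_0)>0$, $-\alpha_0(x_0)/\sigma_{k-1}$ are monotone in the matrix argument (cf.\ \cite{Guan-Zhang}), so from $\widetilde W[w](x_0)\le\widetilde W[\overline{u}](x_0)$ and the strict subsolution inequality for $w$ one obtains
\[
-\alpha(x_0) < \frac{\sigma_k}{\sigma_{k-1}}(\widetilde W[w])(x_0) - \frac{\alpha_0(x_0)}{\sigma_{k-1}(\widetilde W[w])(x_0)} \le \frac{\sigma_k}{\sigma_{k-1}}(\widetilde W[\overline{u}])(x_0) - \frac{\alpha_0(x_0)}{\sigma_{k-1}(\widetilde W[\overline{u}])(x_0)}.
\]
Using $\alpha_0(x_0)\ge\beta_0(x_0)$ to replace $\alpha_0(x_0)$ by $\beta_0(x_0)$ on the right, and then the supersolution inequality above, the right-hand side is $\le -\beta(x_0)$, whence $\alpha(x_0)>\beta(x_0)$, contradicting $\alpha\le\beta$. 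Hence $\underline{u}\le\overline{u}$ on $M$.

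The main obstacle I anticipate is the first step --- producing a genuinely \emph{strict} subsolution $w$ by scaling. The scaling identity by itself yields only a non-strict inequality, and without strictness the final chain degenerates to $\alpha(x_0)\ge\beta(x_0)$, which is consistent with the hypotheses and gives no contradiction; it is precisely $k\ge 2$ and $\alpha_0>0$ (together with $\lambda(W[\underline{u}])\in\Gamma_k$) that make the inequality strict, and this is the only place all of the hypotheses enter. The remaining ingredients --- monotonicity of the Garding cone $\Gamma_{k-1}$ and of $\sigma_k/\sigma_{k-1}-\alpha_0/\sigma_{k-1}$ on it --- are standard and need only be quoted with care.
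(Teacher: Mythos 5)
Your proposal is correct and follows essentially the same route as the paper: rescale $\underline{u}$ by the maximum $C$ of $\underline{u}/\overline{u}$, use the homogeneity computation of Proposition~\ref{Prop6-1} (with $\lambda(W[\underline{u}])\in\Gamma_k$, $\alpha_0>0$ and $k\ge 2$ upgrading the inequality to a strict one), and chain the inequalities at the interior maximum point via $W[\underline{u}/C](x_0)\le W[\overline{u}](x_0)$ together with $\alpha_0\ge\beta_0$ and $\alpha\le\beta$. Your explicit check that $\lambda(W[\overline{u}])(x_0)\in\Gamma_{k-1}$, which rules out the first alternative in the definition of supersolution, is a detail the paper leaves implicit, but it does not change the argument.
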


\begin{proof}
Suppose that $\underline{u} > \overline{u}$ somewhere in the interior of $M$. Let $C$ be the maximum of $\frac{\underline{u}}{\overline{u}}$ on $M$, which is attained at $x_0$ in the interior of $M$. Since $C > 1$, by Proposition \ref{Prop6-1}, we know that $\underline{u}_1 := \frac{\underline{u}}{C}$ satisfies
\begin{equation*}
 \frac{\sigma_k}{\sigma_{k - 1}} \bigg( \frac{2}{n - 2} \underline{u}_1^{- \frac{n + 2}{n - 2}}  W[ \underline{u}_1 ] \bigg) - \frac{ \alpha_0(x)}{\sigma_{k - 1} \bigg( \frac{2}{n - 2} \underline{u}_1^{- \frac{n + 2}{n - 2}}  W[ \underline{u}_1 ] \bigg)} > - \alpha(x).
\end{equation*}
On the other hand, since $\underline{u}_1(x_0) = \overline{u}(x_0)$ while $\underline{u}_1 \leq \overline{u}$ near $x_0$, thus at $x_0$
\[\nabla \underline{u}_1 (x_0) = \nabla \overline{u} (x_0), \quad \nabla^2 \underline{u}_1 (x_0) \leq \nabla^2 \overline{u} (x_0). \]
It follows that $W[\underline{u}_1](x_0) \leq W[\overline{u}](x_0)$, which further implies
\[ \begin{aligned}
& - \alpha (x_0) < \frac{\sigma_k}{\sigma_{k - 1}} \bigg( \frac{2}{n - 2} \underline{u}_1^{- \frac{n + 2}{n - 2}}  W[ \underline{u}_1 ] \bigg)(x_0) - \frac{ \alpha_0(x_0)}{\sigma_{k - 1} \bigg( \frac{2}{n - 2} \underline{u}_1^{- \frac{n + 2}{n - 2}}  W[ \underline{u}_1 ] \bigg)(x_0)} \\
\leq & \frac{\sigma_k}{\sigma_{k - 1}} \bigg( \frac{2}{n - 2} \overline{u}^{- \frac{n + 2}{n - 2}}  W[ \overline{u} ] \bigg)(x_0) - \frac{ \beta_0(x_0)}{\sigma_{k - 1} \bigg( \frac{2}{n - 2} \overline{u}^{- \frac{n + 2}{n - 2}}  W[ \overline{u} ] \bigg)(x_0)} \leq - \beta (x_0),
\end{aligned} \]
which is a contradiction.
\end{proof}

Similar to Proposition \ref{subsolution}, we observe the following fact.
\begin{prop} \label{Proposition6-1}
For any real number $\alpha$, any positive constant $\alpha_0$, any fixed $s > 0$, there exists a positive constant $c$ such that
\begin{equation*}
 u (x) = c \Big( s^2 - {|x - x_0|}^2 \Big)^{1 - \frac{n}{2}} \quad \text{ in  }  B_s (x_0),
\end{equation*}
and
\begin{equation*}
v (x) = c \Big( {|x - x_0|}^2 - s^2 \Big)^{1 - \frac{n}{2}} \quad \text{  in  } \mathbb{R}^n\setminus \overline{B_s (x_0)},
\end{equation*}
are admissible solutions of
\[
\begin{aligned}
\sigma_k \bigg( \frac{2}{n - 2} u^{- \frac{n + 2}{n - 2}}  W[ u ] \bigg) + \alpha \sigma_{k - 1} \bigg( \frac{2}{n - 2} u^{- \frac{n + 2}{n - 2}} W[ u ] \bigg) = \alpha_0,
\end{aligned}
\]
which approach to $\infty$ on $\partial B_s (x_0)$.
\end{prop}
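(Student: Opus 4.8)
The plan is to exploit the computation already carried out in the proof of Proposition \ref{subsolution}: the radial profile $(s^2 - |x - x_0|^2)^{1 - n/2}$, and its exterior analogue, make $W[\,\cdot\,]$ a (variable) scalar multiple of the identity, while by Proposition \ref{prop} we have the homogeneity $W[cu] = c\,W[u]$. The upshot will be that substituting $u = c\,(s^2-|x-x_0|^2)^{1-n/2}$ into the equation cancels all the $x$-dependence and leaves a single scalar equation in $c$, which can then be solved.

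First I would record that, setting $u_0(x) = (s^2 - |x-x_0|^2)^{1-n/2}$, one computes (exactly as in the proof of Proposition \ref{subsolution}, and analogously for the exterior profile $v_0(x) = (|x-x_0|^2 - s^2)^{1-n/2}$)
\[
W_{ij}[u_0] = 2(n-1)(n-2)\,s^2\,\big(s^2 - |x-x_0|^2\big)^{-\frac n2 - 1}\,\delta_{ij}.
\]
In particular $\lambda\big(W[u_0]\big)$ has all $n$ entries equal and strictly positive, so $u_0$ — and hence $c\,u_0$ for every $c > 0$, by $W[cu_0] = c\,W[u_0]$ — is admissible, and $u_0 \to \infty$ on $\partial B_s(x_0)$ because $1 - \tfrac n2 < 0$. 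The same statements hold for $v_0$ on $\mathbb{R}^n \setminus \overline{B_s(x_0)}$.

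Next I would substitute $u = c\,u_0$ into the equation. Since $u_0^{-\frac{n+2}{n-2}} = (s^2 - |x-x_0|^2)^{\frac{n+2}{2}}$, the power of $(s^2-|x-x_0|^2)$ coming from $W[cu_0]$ is cancelled exactly, and
\[
\frac{2}{n-2}\,(c\,u_0)^{-\frac{n+2}{n-2}}\,W[c\,u_0] \;=\; 4(n-1)\,s^2\,c^{-\frac{4}{n-2}}\,I \;=:\; t\,I,
\]
a constant multiple of the identity. Because $\sigma_k(tI) = C_n^k\,t^k$ and $\sigma_{k-1}(tI) = C_n^{k-1}\,t^{k-1}$, the equation $\sigma_k + \alpha\,\sigma_{k-1} = \alpha_0$ becomes the scalar relation
\[
C_n^k\,t^k + \alpha\,C_n^{k-1}\,t^{k-1} = \alpha_0 .
\]

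Finally I would solve this for $t > 0$: the function $f(t) = C_n^k\,t^k + \alpha\,C_n^{k-1}\,t^{k-1}$ is continuous on $(0,\infty)$, satisfies $f(t) \to +\infty$ as $t \to \infty$, and takes a value strictly below $\alpha_0$ for small $t$ (indeed $f(t) \to 0$ when $\alpha \geq 0$, and $f(t) < 0$ for small $t$ when $\alpha < 0$), so by the intermediate value theorem there is $t_0 > 0$ with $f(t_0) = \alpha_0$. Taking $c = \big(4(n-1)s^2/t_0\big)^{\frac{n-2}{4}}$ then produces the desired $u = c\,u_0$ on $B_s(x_0)$ and $v = c\,v_0$ on $\mathbb{R}^n \setminus \overline{B_s(x_0)}$. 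There is no substantial obstacle: the whole content is the exact cancellation turning the PDE into the algebraic identity $C_n^k\,t^k + \alpha\,C_n^{k-1}\,t^{k-1} = \alpha_0$, and the only point needing a word of justification is that this identity has a positive root, which holds for every real $\alpha$ precisely because $\alpha_0 > 0$. (As a consistency check, for $\alpha = 0$ and $\alpha_0 = 1$ this recovers $c = \big(4(n-1)(C_n^k)^{1/k}s^2\big)^{\frac{n-2}{4}}$, the constant appearing in Proposition \ref{subsolution}.)
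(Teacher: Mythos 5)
Your proposal is correct and follows essentially the same route as the paper: compute $W$ for the radial profile to see that $\frac{2}{n-2}u^{-\frac{n+2}{n-2}}W[u]$ is the constant matrix $4(n-1)s^2c^{-\frac{4}{n-2}}I$, reduce the PDE to the scalar equation $C_n^k t^k + \alpha C_n^{k-1}t^{k-1}=\alpha_0$ in $t=4(n-1)s^2c^{-\frac{4}{n-2}}$, and apply the intermediate value theorem (which, as in the paper, uses $k\geq 2$ so that the left side tends to a value below $\alpha_0$ as $t\to 0^+$). Your additional remarks on admissibility and the consistency check at $\alpha=0$ are fine but do not change the argument.
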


\begin{proof}
Consider
\[  u(x) =  c \Big( s^2 - {|x - x_0|}^2 \Big)^{1 - \frac{n}{2}}. \]
We have
\[ W_{ij} [u] = 2 (n - 1) (n - 2) c \Big( s^2 - |x - x_0|^2 \Big)^{- \frac{n}{2} - 1} s^2  \delta_{ij}, \]
and therefore
\[ \frac{2}{n - 2} u^{- \frac{n + 2}{n - 2}} W_{ij} [u] = 4 (n - 1) c^{- \frac{4}{n - 2}} s^2 \delta_{ij}. \]
Hence
\[\begin{aligned}
& \sigma_k \bigg( \frac{2}{n - 2} u^{- \frac{n + 2}{n - 2}}  W[ u ] \bigg) + \alpha \sigma_{k - 1} \bigg( \frac{2}{n - 2} u^{- \frac{n + 2}{n - 2}} W[ u ] \bigg)  \\
= & \bigg(  4 (n - 1) c^{- \frac{4}{n - 2}} s^2 \bigg)^k \sigma_k \big( I \big) + \alpha \bigg(  4 (n - 1) c^{- \frac{4}{n - 2}} s^2 \bigg)^{k - 1} \sigma_{k - 1} \big( I \big).
\end{aligned} \]
Since $k \geq 2$, by intermediate value theorem, there exists a positive constant $c$ such that the above quantity equals $\alpha_0$.
\end{proof}

\begin{rem} \label{Remark6-1}
The relation between $c$ and $s$ in $u(x)$ and $v(x)$ in Proposition \ref{Proposition6-1} can be obtained once $\alpha$ and $\alpha_0$ are fixed. In fact, let $\tilde{\mu}$ be a positive root of
\[  \mu^k \sigma_k \big( I \big) + \alpha \mu^{k - 1} \sigma_{k - 1} \big( I \big) = \alpha_0. \]
Then choose
\[  4 (n - 1) c^{- \frac{4}{n - 2}} s^2 = \tilde{\mu}  \]
to obtain
\[ c = \bigg(\frac{4 (n - 1)}{\tilde{\mu}}\bigg)^{\frac{n - 2}{4}} s^{\frac{n - 2}{2}}. \]
\end{rem}

\text{\bf Proof of Theorem \ref{Theorem6-2} }
If $\Omega$ is a bounded domain, for any positive integer $m$, we consider the Dirichlet problem
\begin{equation} \label{eq6-1}
\left\{\begin{aligned}
F \big( \mathcal{W} [ v ] \big) = & - \frac{\alpha(x)}{n - 2} e^{2 v} \quad &\text{  in  } \Omega, \\
v = & m \quad &\text{  on  } \partial\Omega.
\end{aligned} \right.
\end{equation}
By Theorem \ref{Theorem5-2}, there exists a smooth $(k - 1)$-admissible solution $v_m$ to \eqref{eq6-1}.
Let $\mu_0$ be the smallest positive root of the equation
\[ \mu^k + \overline{\alpha} \mu^{k - 1} = \underline{\alpha}_0,  \]
where
\[ \overline{\alpha} = \max \Big\{ \sup\limits_{\Omega} \alpha(x), 0.01 \Big \} > 0, \quad \underline{\alpha}_0 = \inf\limits_{\Omega} \alpha_0(x) > 0. \]
By Loewner and Nirenberg \cite{Loewner}, there exists a positive solution $\overline{u}$ to
\begin{equation} \label{eq6-2}
\left\{ \begin{aligned}
\sigma_1 \bigg( \frac{2}{n - 2} u^{- \frac{n + 2}{n - 2}} W[u] \bigg) = & \mu_0 \quad \text{  in  } \Omega, \\
u = & \infty \quad \text{  on  } \partial \Omega.
\end{aligned} \right.
\end{equation}
Since $u_m = e^{\frac{n - 2}{2} v_m}$ satisfies
\[\begin{aligned}
\underline{\alpha}_0 \leq \alpha_0 (x) = & \sigma_k \bigg( \frac{2}{n - 2} u_m^{- \frac{n + 2}{n - 2}}  W[ u_m ] \bigg) + \alpha(x) \sigma_{k - 1} \bigg( \frac{2}{n - 2} u_m^{- \frac{n + 2}{n - 2}}  W[ u_m ] \bigg) \\
\leq & \sigma_1^k  \bigg( \frac{2}{n - 2} u_m^{- \frac{n + 2}{n - 2}}  W[ u_m ] \bigg) + \overline{\alpha} \sigma_1^{k - 1} \bigg( \frac{2}{n - 2} u_m^{- \frac{n + 2}{n - 2}}  W[ u_m ] \bigg),
\end{aligned} \]
we have
\[ \sigma_1 \bigg( \frac{2}{n - 2} u_m^{- \frac{n + 2}{n - 2}}  W[ u_m ] \bigg) \geq \mu_0. \]
By the maximum principle Theorem \ref{MP}, we have
\[  u_m  \leq \overline{u}  \quad \text{  in  } \Omega \quad \text{  for any  } m. \]

Denote
\[ \underline{\alpha} = \inf\limits_{\Omega} \alpha(x), \quad \overline{\alpha}_0 = \sup\limits_{\Omega} \alpha_0(x). \]
Let $t > 0$ be sufficiently large such that $\overline{\Omega} \subset B_t(0)$.
Consider the equation
\begin{equation} \label{eq6-3}
\sigma_k \bigg(  \frac{2}{n - 2} u^{- \frac{n + 2}{n - 2}}  W[ u ]  \bigg) + \underline{\alpha} \sigma_{k - 1} \bigg(  \frac{2}{n - 2} u^{- \frac{n + 2}{n - 2}}  W[ u ]  \bigg) = \overline{\alpha}_0.
\end{equation}
For any fixed $s > t > 0$, by Proposition \ref{Proposition6-1} and Remark \ref{Remark6-1}, there exists a smooth solution
\[ \underline{u} =  \bigg(\frac{4 (n - 1)}{\tilde{\mu}}\bigg)^{\frac{n - 2}{4}} s^{\frac{n - 2}{2}}  \Big( s^2 - |x|^2 \Big)^{1 - \frac{n}{2}}   \]
to equation \eqref{eq6-3} in $B_s(0)$, which is a subsolution of \eqref{eq4-1}. We may choose $s$ sufficiently large so that
\[\begin{aligned}
\underline{u}  \leq \bigg(\frac{4 (n - 1)}{\tilde{\mu}}\bigg)^{\frac{n - 2}{4}} s^{\frac{n - 2}{2}} \Big( s^2 - t^2 \Big)^{1 - \frac{n}{2}} \leq e^{\frac{n - 2}{2}}
\leq e^{\frac{(n - 2) m}{2}} = u_m  \quad \text{  on  } \partial\Omega.
\end{aligned} \]
By Theorem \ref{Theorem6-1} we know that
\[ u_m \geq \underline{u} \quad \text{  in  } \Omega \quad \text{  for any  } m.  \]
By interior $C^2$ estimate established in last section and Evans-Krylov theory \cite{Evans, Krylov}, we obtain a smooth  positive $(k - 1)$-admissible solution $u$ to equation \eqref{eq4-1} which tends to $\infty$ on $\partial \Omega$.

When $\Omega$ is unbounded, we may assume without loss of generality that $0 \notin \overline{\Omega}$. Let $B_s(0)$ be a fixed ball such that $\overline{\Omega} \subset \mathbb{R}^n\setminus \overline{B_s(0)}$.
By Proposition \ref{Proposition6-1}, there exists a smooth solution
\[ \underline{u} = c_0 \Big( |x|^2 - s^2 \Big)^{1 - \frac{n}{2}} \quad \text{ for some positive constant } c_0  \]
to equation \eqref{eq6-3} in $\mathbb{R}^n \setminus \overline{B_s(0)}$, which is a subsolution of \eqref{eq4-1}.

For any $R > \max\limits_{\partial\Omega} \underline{u}$ large enough such that $\partial\Omega \subset B_R(0)$, by Theorem \ref{Theorem5-2}, there exists a smooth positive $(k - 1)$-admissible solution $u_R$ to the Dirichlet problem
\begin{equation*}
\left\{
\begin{aligned}
& \sigma_k \bigg( \frac{2}{n - 2} u^{- \frac{n + 2}{n - 2}}  W[ u ] \bigg) + \alpha \sigma_{k - 1} \bigg( \frac{2}{n - 2} u^{- \frac{n + 2}{n - 2}} W[ u ] \bigg) = \alpha_0  \quad \text{  in  }   B_{R}(0) \cap \Omega, \\
& u =  R \quad \text{  on  }  \partial \Omega, \\
& u =  \underline{u} \quad \text{  on  }  \partial B_{R}(0).
\end{aligned}
\right.
\end{equation*}
By Theorem \ref{Theorem6-1}, we know that
\[  u_R \geq \underline{u} \quad \text{  in  }  B_R(0) \cap \Omega.   \]

By Lemma \ref{Loewner}, we are able to find a positive solution $\overline{u} \in C^{\infty}(\Omega)$ of
\eqref{eq6-2}
which decays to $0$ at $\infty$ with the decay rate
\[  |x|^{n - 2} \overline{u} (x) \rightarrow  c \quad \text{  as  } |x| \rightarrow \infty \]
for some positive constant $c$.
Comparing the decay rate as $|x| \rightarrow \infty$, we have on $\partial B_R(0)$ with $R$ sufficiently large,
\[ u_R (x) = \underline{u}(x) \leq \frac{2 c_0}{c} \overline{u}(x). \]
Meanwhile, we have
\[ u_R (x) = R < \infty = \frac{2 c_0}{c} \overline{u}(x) \quad \text{  on  } \partial \Omega. \]
By Proposition \ref{prop} and Theorem \ref{MP},  we obtain
\[ u_R(x) \leq \max\Big\{\frac{2 c_0}{c}, 1 \Big\} \overline{u} (x) \quad \text{  in  } B_R(0) \cap \Omega. \]
Applying the interior estimates in section 4, Evans-Krylov interior estimates \cite{Evans, Krylov} and a standard diagonal process, we obtain a smooth positive $(k - 1)$-admissible solution $u$ of \eqref{eq4-1} in $\Omega$ which tends to $\infty$ on $\partial \Omega$.
\hfill \qedsymbol

\vspace{2mm}

\subsection{Maximal solution on general domain in $\mathbb{R}^n$ when $\alpha \leq 0$}~

\vspace{2mm}

We first observe the fact that when $\alpha(x) \leq 0$, any $(k - 1)$-admissible solution to equation \eqref{eq4-1} must be $k$-admissible. Then we are able to apply the maximum principle Theorem \ref{Theorem6-1}. As in section 3, we can define the maximal solution of \eqref{eq4-1}.

Assume throughout this subsection that $\Omega\subset\mathbb{R}^n$ is a domain with smooth compact boundary $\partial \Omega$. Moreover, we assume that
\[\alpha(x) \leq 0  \text{  in  } \Omega, \quad   \underline{\alpha}_0 = \inf\limits_{\Omega} \alpha_0(x) > 0, \quad   \underline{\alpha} = \inf\limits_{\Omega} \alpha(x) > - \infty, \quad \overline{\alpha}_0 = \sup\limits_{\Omega} \alpha_0(x) < \infty.   \]

\begin{defn} \label{Maximal solution}
A smooth positive $k$-admissible solution $u_{\Omega}$ of \eqref{eq4-1} is said to be maximal in $\Omega$, if it is greater than or equal to any smooth positive $k$-admissible solution of \eqref{eq4-1} in $\Omega$.
\end{defn}

Let $\Omega_{(1)} \Subset \Omega_{(2)} \Subset \ldots$ be an increasing sequence of bounded subdomains of $\Omega$ with smooth compact boundaries $\partial\Omega_{(j)}$ which are closed hypersurfaces such that $\Omega = \cup \Omega_{(j)}$.
By Theorem \ref{Theorem6-2}, we can find a smooth positive $k$-admissible solution $u_{(j)}$ of \eqref{eq4-1} in $\Omega_{(j)}$ which tends to $\infty$ on $\partial \Omega_{(j)}$.
By Theorem \ref{Theorem6-1}, we see that $\{u_{(j)}\}$ is a monotone decreasing sequence of positive functions. It follows that $u_{(j)}$ converges to a nonnegative function $u_{\Omega}$ in $\Omega$.

\begin{lemma} \label{Lemma6-1}
Either $u_{\Omega} > 0$ in $\Omega$ or $u_{\Omega} \equiv 0$ in $\Omega$.
\end{lemma}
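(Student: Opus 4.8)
The plan is to follow the proof of Lemma~\ref{dichotomy} almost verbatim, with Proposition~\ref{Proposition6-1} playing the role of Proposition~\ref{subsolution} and the maximum principle Theorem~\ref{Theorem6-1} playing the role of Theorem~\ref{MP}. Suppose $u_{\Omega} \not\equiv 0$. Then, by exactly the argument used in Lemma~\ref{dichotomy}, there are a point $x_0 \in \Omega$, a radius $r_0 > 0$ with $\overline{B(x_0, r_0)} \subset \Omega$, and a constant $c > 0$ such that $u_{\Omega} \geq c$ on $\overline{B(x_0, r_0)}$. It remains to propagate positivity from this ball to all of $\Omega$.

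The next step is to produce an explicit positive $k$-admissible subsolution of \eqref{eq4-1} on the exterior of a small ball, calibrated so that it equals $c$ on $\partial B(x_0, r_0)$. Put $\underline{\alpha} = \inf_{\Omega} \alpha$ and $\overline{\alpha}_0 = \sup_{\Omega} \alpha_0$. Since $k \geq 2$, the polynomial $\mu \mapsto \mu^k \sigma_k(I) + \underline{\alpha}\, \mu^{k - 1} \sigma_{k - 1}(I)$ vanishes at $\mu = 0$ and tends to $+ \infty$, hence equals $\overline{\alpha}_0 > 0$ at some $\tilde{\mu} > 0$. By Proposition~\ref{Proposition6-1} and Remark~\ref{Remark6-1}, for every $r_1 > 0$ the function
\[
v(x) = \Big( \frac{4 (n - 1)}{\tilde{\mu}} \Big)^{\frac{n - 2}{4}} r_1^{\frac{n - 2}{2}} \big( |x - x_0|^2 - r_1^2 \big)^{1 - \frac{n}{2}}
\]
on $\mathbb{R}^n \setminus \overline{B_{r_1}(x_0)}$ is an admissible solution of the constant-coefficient equation \eqref{eq6-3}, with $\lambda(W[v])$ equal to a positive multiple of $(1, \ldots, 1) \in \Gamma_k$; since $\underline{\alpha} \leq \alpha(x)$ and $\overline{\alpha}_0 \geq \alpha_0(x)$ on $\Omega$, it is in particular a positive $k$-admissible subsolution of \eqref{eq4-1}. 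The quantity $r_1^{\frac{n - 2}{2}} (r_0^2 - r_1^2)^{1 - \frac{n}{2}}$ increases continuously from $0$ to $+ \infty$ as $r_1$ ranges over $(0, r_0)$, so by the intermediate value theorem we may fix $r_1 \in (0, r_0)$ for which $v \equiv c$ on $\partial B(x_0, r_0)$.

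Then I would run the comparison on $M := \overline{\Omega_{(j)} \setminus B(x_0, r_0)}$, for $j$ large enough that $\overline{B(x_0, r_0)} \subset \Omega_{(j)}$, so that $\partial M = \partial \Omega_{(j)} \cup \partial B(x_0, r_0)$. On $\partial \Omega_{(j)}$ we have $u_{(j)} = \infty > v$, and on $\partial B(x_0, r_0)$ we have $u_{(j)} \geq u_{\Omega} \geq c = v$. Now $v$ is a $k$-admissible subsolution of \eqref{eq6-3}, the equation with data $(\underline{\alpha}, \overline{\alpha}_0)$, while $u_{(j)}$ is a $k$-admissible solution — hence a supersolution — of \eqref{eq4-1}, the equation with data $(\alpha(x), \alpha_0(x))$, and $\underline{\alpha} \leq \alpha(x)$, $\overline{\alpha}_0 \geq \alpha_0(x)$; thus Theorem~\ref{Theorem6-1} applies and yields $v \leq u_{(j)}$ on $M$. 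Letting $j \to \infty$ gives $v \leq u_{\Omega}$ on $\Omega \setminus B(x_0, r_0)$, and since $v > 0$ there while $u_{\Omega} \geq c > 0$ on $\overline{B(x_0, r_0)}$, we conclude $u_{\Omega} > 0$ throughout $\Omega$.

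The only genuine obstacle is bookkeeping rather than analysis: one must check that the free parameter supplied by Proposition~\ref{Proposition6-1} and Remark~\ref{Remark6-1} can be tuned so that the exterior solution attains the prescribed boundary value $c$ on the sphere of radius $r_0$, and that the monotonicity hypotheses $\alpha(x) \leq \beta(x)$, $\alpha_0(x) \geq \beta_0(x)$ of Theorem~\ref{Theorem6-1} are indeed met when one compares the auxiliary equation \eqref{eq6-3} against \eqref{eq4-1}. No new analytic ingredient beyond the results already established is needed.
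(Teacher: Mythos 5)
Your proposal is correct and follows essentially the same route as the paper: assume $u_{\Omega}\not\equiv 0$ to get a ball where $u_{\Omega}\ge c$, use Proposition \ref{Proposition6-1} and Remark \ref{Remark6-1} to calibrate the exterior solution of the constant-coefficient equation \eqref{eq6-3} so that it equals $c$ on $\partial B(x_0,r_0)$, and then compare with $u_{(j)}$ via Theorem \ref{Theorem6-1}. The only difference is that you spell out the intermediate-value and comparison bookkeeping that the paper leaves implicit.
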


\begin{proof}
Suppose that $u_{\Omega} \not\equiv 0$. Then we must have $u_{\Omega} > c$ on some $B(x_0, r_0) \subset \Omega$ for some constant $c > 0$. By Proposition \ref{Proposition6-1} and Remark \ref{Remark6-1},
we may choose $0 < r_1 < r_0$ such that
\[ v =  \bigg(\frac{4 (n - 1)}{\tilde{\mu}}\bigg)^{\frac{n - 2}{4}} r_1^{\frac{n - 2}{2}} \Big( {|x - x_0|}^2 - r_1^2 \Big)^{1 - \frac{n}{2}} \quad \text{  in  } \mathbb{R}^n\setminus \overline{B_{r_1} (x_0)} \]
is a solution of
\[ \sigma_k \bigg(  \frac{2}{n - 2} u^{- \frac{n + 2}{n - 2}}  W[ u ]  \bigg) + \underline{\alpha} \sigma_{k - 1} \bigg(  \frac{2}{n - 2} u^{- \frac{n + 2}{n - 2}}  W[ u ]  \bigg) = \overline{\alpha}_0 \]
satisfying
\[   \bigg(\frac{4 (n - 1)}{\tilde{\mu}}\bigg)^{\frac{n - 2}{4}} r_1^{\frac{n - 2}{2}}  \Big( r_0^2 - r_1^2 \Big)^{1 - \frac{n}{2}} = c.  \]
By Theorem \ref{Theorem6-1}, we know that $u_{(j)} \geq v$ in $\Omega_{(j)}\setminus B(x_0, r_0)$ for any $j$. Hence $u_{\Omega} \geq v > 0$ in $\Omega\setminus B(x_0, r_0)$. We thus have $u_{\Omega} > 0$ in $\Omega$.
\end{proof}

\vspace{1mm}

\begin{rem}~
\begin{enumerate}
\item In view of Lemma \ref{Lemma6-1}, the interior regularity in section 4, and Evans-Krylov theory
\cite{Evans, Krylov}, we know that $u_{\Omega}$ is smooth.

\item If there exists a positive $k$-admissible subsolution of \eqref{eq4-1} in $\Omega$ (for example, when $\mathbb{R}^n \setminus \overline{\Omega} \neq \emptyset$), then $u_{\Omega} > 0$ in $\Omega$.

\item When $u_{\Omega} > 0$ in $\Omega$, $u_{\Omega}$ is the maximal solution of equation \eqref{eq4-1} in $\Omega$. In fact, if $w$ is any smooth positive $k$-admissible solution of \eqref{eq4-1} in $\Omega$, then $u_{(j)} \geq w$ in $\Omega_{(j)}$ by Theorem \ref{Theorem6-1}. Hence $u_{\Omega} \geq  w$ in $\Omega$.
\end{enumerate}
\end{rem}

\begin{defn} \label{def6-1}
 We call a compact subset $\Gamma \subset \partial\Omega$ regular, if
\begin{equation*}
u_{\Omega} (x) \rightarrow \infty  \quad \text{  as  }  x \rightarrow \Gamma.
\end{equation*}
\end{defn}

Now we consider a portion $\Gamma\subset\partial\Omega$ which is a smooth compact non-self-intersecting surface of codimension $m$. Meanwhile, we assume that $\partial\Omega\setminus\Gamma$ is smooth compact.

\begin{thm} \label{Theorem6-3}
Let $\Omega$ be a domain in $\mathbb{R}^n$ and $\Gamma$ be a compact subset of $\partial\Omega$ such that $\partial\Omega\setminus\Gamma$ is also compact. Suppose that $u_{\Omega} \not\equiv 0$. Then
$\Gamma$ is regular if there exists an open neighborhood $U$ of $\Gamma$ and a $C^2$ positive $k$-admissible subsolution $\phi(x)$ of \eqref{eq4-1} defined in $\Omega \cap U$ which tends to $\infty$ as $x \rightarrow \Gamma$.
\end{thm}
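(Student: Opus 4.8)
The plan is to mirror the proof of Theorem \ref{nscondition for regularity}, with the scaling lemma Proposition \ref{Prop6-1} and the comparison principle Theorem \ref{Theorem6-1} playing the roles that Proposition \ref{prop} and the maximum principle played there. First I would normalize the geometric setup: shrinking $U$ if necessary, assume that $\overline{U}$ is compact, that $\overline{U}\cap(\partial\Omega\setminus\Gamma)=\emptyset$, and that $0<\phi\in C^2(\overline{U}\cap\Omega)$ with $\phi(x)\to\infty$ as $x\to\Gamma$. Let $\{\Omega_{(j)}\}$ and $\{u_{(j)}\}$ be the exhausting sequence and the associated $k$-admissible solutions with infinite boundary values used to construct $u_\Omega$ in this subsection. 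For $j$ sufficiently large we have $\partial(U\cap\Omega_{(j)})=\partial U\cup(U\cap\partial\Omega_{(j)})$.

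Next I would set up the boundary comparison. Since $u_\Omega\not\equiv 0$, Lemma \ref{Lemma6-1} gives $u_\Omega>0$ in $\Omega$; as $\partial U\Subset\Omega$ is compact, $m:=\inf_{\partial U}u_\Omega>0$. Put $A:=\max\{(\sup_{\partial U}\phi)/m,\,1\}\ge 1$. Then on $\partial U$ one has $u_{(j)}\ge u_\Omega\ge m\ge \phi/A$, while on $U\cap\partial\Omega_{(j)}$ one has $u_{(j)}=\infty>\phi/A$. Because $\phi$ is $k$-admissible, $\lambda(W[\phi])\in\Gamma_k\subset\overline{\Gamma}_k$, so $W[\phi/A]=A^{-1}W[\phi]$ again has all eigenvalues in $\Gamma_k$, and Proposition \ref{Prop6-1} shows that $\phi/A$ is still a subsolution of \eqref{eq4-1} with $\lambda(W[\phi/A])\in\Gamma_k$. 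On the other hand $u_{(j)}$ is a $k$-admissible solution of \eqref{eq4-1}, hence a supersolution with the same $\alpha$, $\alpha_0$.

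Then I would apply Theorem \ref{Theorem6-1} on the compact manifold-with-boundary $\overline{U\cap\Omega_{(j)}}$, taking $\underline{u}=\phi/A$, $\overline{u}=u_{(j)}$, $\beta=\alpha$, $\beta_0=\alpha_0$; the boundary inequality $\phi/A\le u_{(j)}$ on $\partial(U\cap\Omega_{(j)})$ just established yields $\phi/A\le u_{(j)}$ throughout $U\cap\Omega_{(j)}$. Letting $j\to\infty$ and using $u_{(j)}\downarrow u_\Omega$ gives $\phi/A\le u_\Omega$ in $U\cap\Omega$. Since $\phi\to\infty$ as $x\to\Gamma$, so does $u_\Omega$, i.e. $\Gamma$ is regular in the sense of Definition \ref{def6-1}.

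The only real technical point is that $U\cap\Omega_{(j)}$ need not have smooth boundary where $\partial U$ meets $\partial\Omega_{(j)}$, so that Theorem \ref{Theorem6-1} does not apply verbatim; I expect to handle this exactly as in \cite{Loewner, Gonzalez-Li-Nguyen}, either by a further smooth interior exhaustion of $U$ or by choosing $U$ so that $\partial U$ is transversal to each $\partial\Omega_{(j)}$, after which the comparison argument goes through unchanged. I do not anticipate any deeper obstacle: the scaling invariance of subsolutions tailored to \eqref{eq4-1} and the comparison principle do all the work. This is also precisely why only the sufficient direction is asserted here — Proposition \ref{Prop6-1} provides the scaling invariance of subsolutions only under downward scaling, so the reverse construction of a local subsolution out of regularity, available in Theorem \ref{nscondition for regularity}, is not at our disposal for \eqref{eq4-1}.
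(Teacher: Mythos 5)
Your proposal is correct and follows essentially the same route as the paper: rescale $\phi$ by $A=\max\{\sup_{\partial U}\phi/\inf_{\partial U}u_\Omega,\,1\}$ using Proposition \ref{Prop6-1}, compare with $u_{(j)}$ on $\partial(U\cap\Omega_{(j)})$ via Theorem \ref{Theorem6-1}, and pass to the limit. Your added remark on the possible non-smoothness of $\partial(U\cap\Omega_{(j)})$ flags a point the paper silently elides, but it does not change the argument.
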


\begin{proof}
Without loss of generality we may assume that $\overline{U}$ is compact,
\[ \overline{U} \cap (\partial \Omega \setminus \Gamma) = \emptyset  \text{  and  }  0 < \phi \in C^2(\overline{U} \cap \Omega). \]
For $j$ sufficiently large, we have
\[ \partial (U \cap \Omega_{(j)}) = \partial U  \cup ( U \cap \partial \Omega_{(j)} ). \]
Since $u_{\Omega}$ is positive in $\Omega$ and $\partial U \subset \Omega$ is compact, we have
$\inf\limits_{\partial U} u_{\Omega} := m > 0$.
Denote $M := \sup\limits_{\partial U} \phi(x) > 0$ and $A := \max\{\frac{M}{m}, 1\}$. We note that
$u_{(j)} \geq u_{\Omega} \geq m \geq \frac{\phi}{A}$ on $\partial U$, and
$u_{(j)} = \infty > \frac{\phi}{A}$ on $U \cap \partial \Omega_{(j)}$.
By Proposition \ref{Prop6-1}, $\frac{\phi}{A}$ is again a positive $k$-admissible subsolution of \eqref{eq4-1}. By Theorem \ref{Theorem6-1}, we arrive at
$u_{(j)} \geq \frac{\phi}{A}$ in $U \cap \Omega_{(j)}$.
Consequently, we obtain $u_{\Omega} \geq \frac{\phi}{A}$ in $U \cap \Omega$.
\end{proof}

In what follows we assume that $u_{\Omega} \not\equiv 0$.
Similar to section 3, let $\rho(x)$ be the distance of $x$ to $\Gamma$, and
\[ \Gamma_{\rho_0}  = \big\{ x \in \Omega \, \big| \, \rho(x) < \rho_0 \big\}. \]
For $\rho_0$ sufficiently small, define on $\Gamma_{\rho_0} \setminus \Gamma$
\[ \phi(x) = c \rho^{1 - \frac{n}{2}} (x), \]
where $c > 0$ is a constant to be determined later.

Denote
\[ v_m = \big( \underbrace{n - m, \ldots, n - m}_{n - m + 1}, \underbrace{2 - m, \ldots, 2 - m}_{m - 1} \big). \]
As $\rho \rightarrow 0$,
\[ \lambda \bigg(  \frac{2}{n - 2} \phi^{- \frac{n + 2}{n - 2}}  W[ \phi ]  \bigg) \rightarrow  c^{- \frac{4}{n - 2}} v_m. \]
If we assume that $v_m \in \Gamma_{k}$, then there exists $c > 0$ sufficiently small such that
\[\begin{aligned}
\sigma_k \Big( c^{- \frac{4}{n - 2}} v_m \Big) + \underline{\alpha} \sigma_{k - 1} \Big( c^{- \frac{4}{n - 2}} v_m \Big) > \overline{\alpha}_0.
\end{aligned} \]
As $\rho_0 > 0$ sufficiently small, within $\{ 0 < \rho < \rho_0 \}$,
\[  \sigma_k \bigg( \frac{2}{n - 2} {\phi}^{- \frac{n + 2}{n - 2}}  W[ \phi ] \bigg) + \underline{\alpha} \sigma_{k - 1} \bigg( \frac{2}{n - 2} {\phi}^{- \frac{n + 2}{n - 2}} W[ \phi ] \bigg) > \overline{\alpha}_0. \]
Therefore, $\Gamma$ is regular by Theorem \ref{Theorem6-3}.

If $v_m \in \mathbb{R}^n \setminus \overline{\Gamma}_k$, as in section 3, we
consider
\[ \psi = \psi_{c, d} = ( c \rho^{- a} + d )^b, \]
where $a$, $b$, $c$ and $d$ are positive constants. Direct calculation shows that
\begin{equation} \label{eq6-5}
 \frac{1}{a b c} \rho^{a + 2} (c \rho^{- a} +  d)^{ \frac{4 b}{n - 2} + 1} \psi^{ - \frac{n + 2}{n - 2}} W [ \psi ]
=  A_m + B_{a b}(\zeta) + \mathcal{O}(\rho),
\end{equation}
where $\zeta$, $A_m$ and $B_{a b}(\zeta)$ are as in \eqref{eq3-1}.
Choose $a$ sufficiently small, and then choose an appropriate $b$ such that $a b$ is slightly larger than $\frac{n}{2} - 1$ such that
\[ \lambda\big( A_m + B_{a b}(\zeta) \big) \notin \overline{\Gamma}_{k} \quad \text{for all } 0 < \zeta < 1. \]
Then choose  $0 < \rho_0 < 1$ further small such that within $0 < \rho < \rho_0$ and for all $0 < \zeta < 1$,
\[ \lambda\big(  A_m + B_{a b}(\zeta) + \mathcal{O}(\rho) \big) \notin  \overline{\Gamma}_{k},  \]
which implies that
\[ \lambda \big( W[\psi] \big) \notin \overline{\Gamma}_{k} \text{  in  } \{ 0 < \rho < \rho_0 \} \text{  for all  } c > 0 \text{  and  }  d > 0. \]
Therefore, we have either $ \lambda \big( W[\psi] \big) \notin \Gamma_{k - 1}$, or, if $ \lambda \big( W[\psi] \big) \in \Gamma_{k - 1}$, then
\[  \sigma_k \bigg( \frac{2}{n - 2} \psi^{- \frac{n + 2}{n - 2}}  W[ \psi ] \bigg) + \alpha \sigma_{k - 1} \bigg( \frac{2}{n - 2} \psi^{- \frac{n + 2}{n - 2}} W[ \psi ] \bigg) \leq 0 < \alpha_0. \]
That is, $\psi$ is a supersolution of \eqref{eq4-1}.
Now, choosing $d$ sufficiently large depending on $\rho_0$ such that on $\rho = \rho_0$ we have $\psi \geq d^{b} \geq u_{\Omega}$. By Proposition \ref{Proposition6-1}, Remark \ref{Remark6-1} and Theorem \ref{Theorem6-1},  the solution of the equation
\[\sigma_k \bigg( \frac{2}{n - 2} u^{- \frac{n + 2}{n - 2}}  W[ u ] \bigg) = \underline{\alpha}_0 \]
in the ball $B_{\rho(x)} (x)$ can serve as an upper bound for $u_{\Omega}$. Thus, we can deduce that
\[ u_{\Omega} \leq  \bigg(\frac{4 (n - 1)}{\tilde{\mu}}\bigg)^{\frac{n - 2}{4}}  \rho^{1 - \frac{n}{2}} \leq c^{b} \rho^{- a b} < \psi \]
within $0 < \rho \leq \delta$, where $\delta > 0$ is a sufficiently small constant depending on $c$. By Theorem \ref{Theorem6-1} we have $u_{\Omega} \leq \psi$ in $\delta < \rho < \rho_0$. Letting $\delta \rightarrow 0$, we arrive at $u_{\Omega} \leq \psi$ in $0 < \rho < \rho_0$. Next letting $c \rightarrow 0$ we deduce that $u_{\Omega} \leq d^{b}$ in $0 < \rho < \rho_0$.
Hence $\Gamma$ is not regular according to Definition \ref{def6-1}.

\vspace{4mm}

\medskip


\begin{thebibliography}{9}

\vspace{4mm}

\bibitem{AILA18}
P. T. Allen, J. Isenberg, J. M. Lee and I. S. Allen,{ \em Weakly asymptotically hyperbolic manifolds}, Comm. Anal. Geom. {\bf 26} (2018), 1--61.

\bibitem{ACF92}
L. Andersson, P. T. Chru\'sciel and H. Friedrich,{ \em On the regularity of solutions to the Yamabe equation and the existence of smooth hyperboloidal initial data for Einstein's field equations}, Comm. Math. Phys. {\bf 149} (1992), 587--612.


\bibitem{Aviles}
P. Aviles,{ \em A study of the singularities of solutions of a class of nonlinear elliptic partial equations}, Comm. Partial Differential Equations {\bf 7} (1982), 609--643.


\bibitem{AM88}
P. Aviles and R. C. McOwen,{ \em Complete conformal metrics with negative scalar curvature in compact Riemannian manifolds}, Duke Math. J. {\bf 56} (1988), 395--398.


\bibitem{CHY05}
S.-Y. A. Chang, Z.-C. Han and P. Yang,{ \em Classification of singular radial solutions to the $\sigma_k$ Yamabe equation on annular domains}, J. Differential Equations, {\bf 216} (2005), 482--501.


\bibitem{CGH22}
L. Chen, X. Guo and Y. He,{ \em A class of fully nonlinear equations arising in conformal geometry}, Int. Math. Res. Not. IMRN (2022), no. 5, 3651--3676.


\bibitem{DN}
J.A.J. Duncan and L. Nguyen,{ \em The $\sigma_k$-Loewner-Nirenberg problem on Riemannian manifolds for $k < \frac{n}{2}$},  arXiv:2310.01346.



\bibitem{Evans}
L. C. Evans, {\em Classical solutions of fully nonlinear, convex,
second order elliptic equations}, { Communications on Pure and
Applied Mathematics} {\bf 35} (1982), 333-363.


\bibitem{Finn98}
D. L. Finn,{ \em Existence of positive solutions to $\Delta_g u = u^q + S u$ with prescribed singularities and their geometric implications}, Comm. Partial Differential Equations, {\bf 23} (1998), 1795--1814.


\bibitem{GNN}
B. Gidas, W. M. Ni and L. Nirenberg, { \em Symmetry and related properties via the maximum principle}, Comm. Math. Phys. {\bf 68} (1979), 209--243.



\bibitem{Gonzalez-Li-Nguyen}
M. Gonz\'alez, Y.Y. Li and L. Nguyen, {\em Existence and uniqueness to a fully nonlinear version of the Loewner-Nirenberg problem}, Commun. Math. Stat. {\bf 6} (2018), 269--288.



\bibitem{GW17}
A. R. Gover and A. Waldron,{ \em Renormalized volume}, Comm. Math. Phys. {\bf 354} (2017), 1205--1244.




\bibitem{Graham17}
C. R. Graham,{ \em Volume renormalization for singular Yamabe metrics}, Proc. Amer. Math. Soc. {\bf 145} (2017), 1781--1792.



\bibitem{Guan08}
B. Guan, {\em Complete conformal metrics of negative Ricci curvature
on compact manifolds with boundary}, Int. Math. Res. Not. IMRN 2008, Art. ID rnn pp 105, 25.




\bibitem{Guan09}
B. Guan, {\em Addendum to: Complete conformal metrics of negative Ricci curvature on compact manifolds
with boundary}, Int. Math. Res. Not. IMRN {\bf 22} (2009), 4354--4355.



\bibitem{GuanP02}
P. Guan, {\em The extremal function associated to intrinsic norms}, Ann. of Math. {\bf 156} (2002), 197--211.


\bibitem{Guan-Zhang}
P. Guan and X. Zhang, {\em A class of curvature type equations}, Pure Appl. Math. Q. {\bf 17} (2021), 865--907.



\bibitem{Gursky-Streets-Warren}
M. Gursky, J. Streets and M. Warren, {\em Existence of complete conformal metrics of negative Ricci curvature on manifolds with boundary}, Calc. Var. Partial
Differential Equations {\bf 41} (2011), 21--43.



\bibitem{GV03}
M. Gursky and J. Viaclovsky,{ \em Fully nonlinear equations on Riemannian manifolds with negative curvature}, Indiana Univ. Math. J. {\bf 52} (2003), 399--419.


\bibitem{HJS20}
Q. Han, X. Jiang and W. Shen,{ \em The Loewner-Nirenberg problem in cones}, arXiv:2012.06799.


\bibitem{HS20}
Q. Han and W. Shen,{ \em The Loewner-Nirenberg problem in singular domains}, J. Funct. Anal. {\bf 279}:108604 (2020), 43 pp.


\bibitem{Jiang21}
X. Jiang,{ \em Boundary expansion for the Loewner-Nirenberg problem in domains with conic singularities}, J. Funct. Anal. {\bf 281}:109122 (2021), 41 pp.



\bibitem{Krylov}
N. V. Krylov, {\em Boundedly nonhomogeneous elliptic and parabolic
equations in a domain}, {Izvestiya Rossiiskoi Akademii Nauk, Seriya
Matematicheskaya} {\bf 47} (1983), 75--108.



\bibitem{LS05}
J. Li and W. Sheng,{ \em Deforming metrics with negative curvature by a fully nonlinear flow}, Calc. Var. Partial Differential Equations {\bf 23} (2005), 33--50.



\bibitem{Li89}
Y.Y. Li,{ \em Degree theory for second order nonlinear elliptic operators and its applications}, Comm. Partial Differential Equations {\bf 14} (1989), 1541--1578.



\bibitem{LN21}
Y.Y. Li and L. Nguyen,{ \em Solutions to the $\sigma_k$-Loewner-Nirenberg problem on annuli are locally Lipschitz and not differentiable}, J. Math. Study {\bf 54} (2021), 123--141.


\bibitem{LNX23}
Y.Y. Li, L. Nguyen and J. Xiong,{ \em Regularity of viscosity solutions of the $\sigma_k$-Loewner-Nirenberg problem}, Proc. Lond. Math. Soc. {\bf 127} (2023), 1--34.




\bibitem{Loewner}
C. Loewner and L. Nirenberg, {\em Partial differential equations
invariant under conformal or projective transformations}, 245--272.
Contributions to Analysis. [A collection of papers dedicated to
Lipman Bers]. New York: Academic Press, 1974.


\bibitem{Mazzeo91}
R. Mazzeo, { \em Regularity for the singular Yamabe problem}, Indiana Univ. Math. J. {\bf 40} (1991), 1277--1299.



\bibitem{Su16}
C. Su,{ \em Starshaped locally convex hypersurfaces with prescribed curvature and boundary}, J. Geom. Anal. { \bf 26} (2016), 1730--1753.

\bibitem{Sui17}
Z. Sui,{ \em Complete conformal metrics of negative Ricci curvature on Euclidean spaces}, J. Geom. Anal. {\bf 27} (2017), 893--907.


\bibitem{Veron}
L. V\'eron,{ \em Singularit\'es \'eliminables d'\'equations elliptiques non lin\'eaires}, J. Differential Equations {\bf 41} (1981), 87--95.


\bibitem{Wang21}
Y. Wang,{ \em Boundary expansions of complete conformal metrics with negative Ricci curvatures}, Calc. Var. Partial Differential Equations {\bf 60}:131 (2021).


\bibitem{Yuan22}
R. Yuan,{ \em The partial uniform ellipticity and prescribed problems on the conformal classes of complete metrics}, arXiv:2203.13212.



\end{thebibliography}
\end{document}